\documentclass[sn-mathphys]{sn-jnl}

\jyear{2022}

\theoremstyle{thmstyleone}
\newtheorem{theorem}{Theorem}
\newtheorem{proposition}[theorem]{Proposition}
\newtheorem{corollary}[theorem]{Corollary}

\theoremstyle{thmstyletwo}
\newtheorem{example}{Example}
\newtheorem{remark}{Remark}

\theoremstyle{thmstylethree}
\newtheorem{definition}{Definition}
\newtheorem{lemma}[theorem]{Lemma}

\newcommand{\Nmap}{\mathcal{N}_\mathrm{m}}

\raggedbottom

\graphicspath{{./figs/}}

\definecolor{darkblue}{rgb}{0.1, 0.1, 0.8}

\newcommand{\N}{\mathbb{N}}

\newcommand{\mc}{\mathcal}

\newcommand{\etal} {{{et al.}}}
\newcommand{\ie} {{{i.e.}}}
\newcommand{\eg} {{{e.g.}}}

\newcommand{\para}[1]{\vspace{1mm}\noindent{\textbf{#1}}}

\newcommand {\mm}[1] {\ifmmode{#1}\else{\mbox{\(#1\)}}\fi}

\newcommand{\denselist}{\vspace{-2pt} \itemsep -1pt\parsep=-1pt\partopsep -1pt}

\newcommand{\Rspace}        {\mm{\mathbb{R}}}

\newcommand{\Hcal}        {\mm{\mathcal{H}}}

\newcommand{\Pcal}        {\mm{\mathcal{P}}}



\usepackage{hyperref}
\usepackage{amsmath}
\usepackage{amssymb}
\usepackage{url}
\usepackage{mathrsfs}
\usepackage{blkarray}
\usepackage{algorithm}
\usepackage{algpseudocode}
\usepackage{bbm}
\usepackage{amscd}


\begin{document}

\title[Hypergraph Co-Optimal Transport]{Hypergraph Co-Optimal Transport: Metric and Categorical Properties}

\author[1]{\fnm{Samir} \sur{Chowdhury}}\email{samirc@stanford.edu}
	
\author*[2]{\fnm{Tom} \sur{Needham}}\email{tneedham@fsu.edu}

\author[2]{\fnm{Ethan} \sur{Semrad}}\email{esemrad@fsu.edu}

\author[3]{\fnm{Bei} \sur{Wang}}
     \email{beiwang@sci.utah.edu}
     
\author[3]{\fnm{Youjia} \sur{Zhou}}
     \email{zhou325@sci.utah.edu}     

\affil[1]{\orgname{Stanford University}, 
\orgaddress{\city{Stanford}, \postcode{94304}, \state{CA}, \country{USA}}}

\affil[2]{\orgname{Florida State University}, \orgaddress{\city{Tallahassee}, \postcode{32306}, \state{FL}, \country{USA}}}

\affil[3]{\orgname{University of Utah}, \orgaddress{\city{Salt Lake City}, \postcode{610101}, \state{UT}, \country{USA}}}


\abstract{
Hypergraphs capture multi-way relationships in data, and they have consequently seen a number of applications in higher-order network analysis, computer vision, geometry processing, and machine learning.  
In this paper, we develop theoretical foundations for studying the space of hypergraphs using ingredients from optimal transport. By enriching a hypergraph with probability measures on its nodes and hyperedges, as well as relational information capturing local and global structures, we obtain a general and robust framework for studying the collection of all hypergraphs.  
First, we introduce a hypergraph distance based on the co-optimal transport framework of Redko {\etal} and study its theoretical properties. 
Second, we formalize common methods for transforming a hypergraph into a graph as maps between the space of hypergraphs and the space of graphs, and study their functorial properties and Lipschitz bounds. 
Finally, we demonstrate the versatility of our Hypergraph Co-Optimal Transport (HyperCOT) framework through various examples.  

}

\keywords{Hypergraphs, hypergraph metrics, optimal transport, category theory, hypergraph matching}

\maketitle


\section{Introduction}
\label{sec:introduction}

The study of hypergraphs is motivated by higher-order network analysis. 
Graphs are commonly used to encode data from complex systems in cybersecurity, biology, sociology, and physical infrastructure, where nodes represent entities and edges represent \emph{pairwise} relations between entities. 
However, real-world data contain abundant \emph{multi-way} relationships. For example, Patania \etal~\cite{PataniaPetriVaccarino2017} studied coauthorship of scientific papers from arXiv, where a paper represents a multi-way relationship between several authors; Cencetti \etal~\cite{CencettiBattistonLepri2021} found that higher-order interactions are ubiquitous in human face-to-face communications characterized by heterogeneous dynamics.

These multi-way relationships are better captured by hypergraphs. 
A \emph{hypergraph} consists of a set of \emph{nodes} and a set of subsets of the node set, whose elements are called \emph{hyperedges}. 
A hypergraph generalizes the notion of a graph, which is simply a hypergraph whose hyperedges each contain exactly two nodes. For example, a hypergraph can be used to represent molecular structure, where nodes are individual atoms, hyperedges represent either covalent bonds between pairs of atoms or polycentric bonds between multiple atoms (see~\cite[Chapter 7.1]{Bretto2013} and~\cite{KonstantinovaSkorobogatov235}). 
Hypergraphs can also model cellular networks~\cite{KlamtHausTheis2009}, where protein complexes are hyperedges consisting of multiple proteins.
Lotito {\etal} demonstrated the informative power of higher-order motifs among a number of hypergraph datasets~\cite{LotitoMusciottoMontresor2022} in sociology, biology, and technology.

Hypergraphs also arise from applications in computer vision, geometry processing, pattern recognition, and machine learning, where establishing correspondences between two feature sets is a fundamental issue. 
The correspondence problem is traditionally formulated as a \emph{graph matching} problem~\cite{Umeyama1988, GoldRangarajan1996}: each graph is constructed with nodes representing features and edges representing \emph{pairwise} relationships between features.  
For example, in geometry processing, absolute locations of features are not as relevant as their pairwise relations, \eg, when imposing invariance over translations and rotations.
However, pairwise relations are not sufficient to incorporate the higher-order relations among features in establishing correspondences -- this can be addressed by a \emph{hypergraph matching} problem~\cite{ChertokKeller2010,LeeChoLee2011}. 

 Recent years have seen the extension of the Gromov-Wasserstein (GW) framework -- originally developed as a tool for comparing metric measure spaces \cite{Memoli2007,dghlp-focm} -- to probabilistic graph matching tasks \cite{solomon2016entropic,hendrikson,xu2019gromov,xu2019scalable,ChowdhuryMemoli2019,vayer2019optimal,chowdhury2021quantized}. Details on this framework are provided in \autoref{sec:networks}, but the rough idea is as follows. Optimal transport metrics are typically used to compare probability distributions on a common metric space, and are widely perceived as a powerful framework for quantifying uncertainty within geometric measurements \cite{peyre2019computational,solomon2018optimal}. GW distances, in contrast, are used to obtain correspondences across different spaces, which in turn permits comparison between spaces that are not comparable a priori \cite{Memoli2007,dghlp-focm}. GW distances are based on finding probabilistic alignments between points in two metric spaces which minimize an overall metric distortion objective. The numerous benefits of this approach include computability via gradient descent \cite{pcs16,pot} or backpropagation \cite{xu2020gromov}, state-of-the-art performance in tasks such as graph partitioning \cite{xu2019scalable,chowdhury2021generalized}, and an underlying Riemannian theoretical  framework \cite{sturm2012space,chowdhury2020gromov}. 
 These successes motivate the development of a GW framework for hypergraphs, which is the goal of this paper. Our contributions include: 
\begin{itemize}
\item We extend the co-optimal transport framework of Redko \etal~\cite{TitouanRedkoFlamary2020} to define an optimal transport-based distance between hypergraphs. To put this on firm theoretical footing, we introduce a  generalized notion of a hypergraph, called a \emph{measure hypernetwork}, see~\autoref{sec:hypernetworks}. We then establish fundamental properties of our hypergraph distance; in particular, we show that it is a pseudometric on the space of measure hypernetworks which induces a complete, geodesic metric on the space of measure networks modulo the distance-0 equivalence relation (\autoref{thm:pseudometric}).

\item We introduce categorical structures on the space of measure (hyper)networks. Through this lens, we study common transformations taking hypergraphs to graphs (e.g., incidence graphs, clique expansions, and line graphs) as functors from the category of hypernetworks to the category of networks. We also study their Lipschitz properties with respect to our hypernetwork distance and the GW distance on the space of networks. We show that the incidence graph map is a functorial isometry of our distance to a novel variant of the GW distance (\autoref{thm:bipartite_equivalence}). We then introduce 1-parameter families of clique expansion and line graph maps and prove that they are Lipschitz functors (\autoref{thm:p_clique}). 

\item We obtain some structural results on our categories of measure (hyper)networks. Proposition \ref{prop:isomorphism_equivalences} shows the equivalence of certain metric notions of isomorphism from the literature and  isomorphisms in the category-theoretic sense. The (non)existence of various limit constructions in our categories is established in Propositions \ref{prop:limits_maps} and \ref{prop:no_limits}.

\item We illustrate our open source computational framework\footnote{Hypergraph Co-Optimal Transport: \url{https://github.com/samirchowdhury/HyperCOT}} for hypergraph matching and comparison across multiple examples in \autoref{sec:examples}. Our method is incorporated into a framework which builds on a topology-based algorithm for hypergraph simplification \cite{ZhouRathorePurvine2022}; in particular, our hypergraph distance is used to highlight simplification levels of interest for real-world datasets.
\end{itemize} 

\subsection{Related Work}
\label{sec:related-work}

\para{Hypergraph metrics, similarity and dissimilarity measures.}
Although there is an abundant amount of data to be modeled as hypergraphs, metrics between hypergraphs have not been explored extensively in the literature.  
Karonski and Palka~\cite{KaronskiPalka1977} considered hypergraphs defined over the same set of nodes as clusterings and defined the Marczewski–Steinhaus (MS) distance between the clusterings. 
The MS distance is a modification from a generalization of the Hausdorff metric for sets~\cite{Ulam1972}. 
Karonski and Palka also discussed a distance between hypergraphs generated by arborescences with the same set of terminal vertices. 
In comparison with the MS distance, our hypergraph distance does not require hypergraphs to be defined on the same set of nodes; and more importantly, our distance comes with nice metric properties, encodes explicitly structural relations between nodes and hyperedges, and naturally gives rise to meaningful hypergraph matching. Lee \etal~\cite{LeeChoLee2011} generalized the edit distance from graphs to hypergraphs. 
However, such a distance is impractical as the graph edit distance is NP-hard to compute~\cite{ZengTungWang2009} and APX-hard to approximate~\cite{Lin1994}.
Smaniotto and Pelillo~\cite{SmaniottoPelillo2021} recently defined distances between attributed hypergraphs using their maximal common subhypergraph; however, computing the maximum similarity subhypergraph isomorphism between the two hypergraphs is NP-complete.  
In comparison, our hypergraph distance may be approximated efficiently by following the optimization routine of co-optimal transport. Another approach to hypergraph comparison is taken in~\cite{SuranaChenRajapakse2021}, which transforms a hypergraph into a graph and applies standard graph similarity or dissimilarity measures. 
Hypergraphs can also be encoded as tensors~\cite{BanerjeeCharMondal2017}, so that spectral comparisons between hypergraphs can be made through their tensor representations. 
The work in \cite{duchenne2011tensor} describes a principled framework for hypergraph matching and proposed to use this framework for going beyond isometry-invariant shape matching and obtaining invariance under similarity, affine, and projective transformations. However, the framework in~\cite{duchenne2011tensor} constructs hypergraphs with a fixed size for each hyperedge, and is not suited for cases where the input data is an arbitrary hypergraph. In contrast, our hypergraph distance can accept any pair of arbitrary hypergraphs as input. 

\para{Optimal transport and Gromov-Wasserstein distances.}
 Gromov-Wasserstein (GW) distances, as described above, essentially require only square matrices encoding pairwise relational information in the spaces to be compared \cite{ChowdhuryMemoli2019}, and are thus well-suited for comparing graphs via adjacency, shortest path distance, or spectral representations \cite{xu2019gromov,vayer2019optimal,chowdhury2021generalized}. However, hypergraphs typically encode more than pairwise relations; our strategy for extending the GW framework to hypergraphs is to work directly with multi-way relations (modeled as rectangular matrices encoding node-hyperedge relations in finite settings) and to leverage the recently-developed co-optimal transport framework of \cite{TitouanRedkoFlamary2020}. Using this framework, we are able to simultaneously infer node-node and hyperedge-hyperedge correspondences when comparing hypergraphs.

\section{Hypernetworks: Theoretical Formulations}
\label{sec:theory}

This section introduces the basic concepts of measured hypernetworks and distances between them.

\subsection{Background: Measure Networks}
\label{sec:networks}

A \emph{graph} is a pair $(V,E)$, where $V$ is a set of \emph{nodes} and $E$ is a set of 2-element subsets of $V$, each of which is called an \emph{edge}. The following gives a far-reaching generalization of the notion of a graph.

\begin{definition}[\cite{ChowdhuryMemoli2019}]
\label{def:measure-network}
A \emph{measure network} is a triple $N=(X,\mu,\omega)$, where $X$ is a Polish space, $\mu$ is a Borel probability measure on $X$ and $\omega:X \times X \to \Rspace$ is a  measurable,  bounded \emph{network function}. We take the simplifying conventions that $\mu$ is fully supported and $\omega$ takes non-negative values.
Let $\mathcal{N}$ denote the collection of all measure networks.
\end{definition}

Recall that the support of a Borel probability measure $\mu$ on a Polish space $X$ is the set $\mathrm{supp}(\mu)$ of $x \in X$ such that $\mu(U) > 0$ for every open neighborhood of $x$ (this is equivalent to \cite[Definition 2.2]{dghlp-focm}). Then a measure network is assumed to have $\mathrm{supp}(\mu) = X$. 

A graph $(V,E)$ with a probability distribution $\mu$ on $V$ defines a measure network by, for example, taking $\omega$ to be a binary adjacency function. However, this definition encompasses a huge variety of other structures; any metric measure space (metric space endowed with a Borel probability measure) is, in particular, a measure network. As was also observed in, e.g.,  \cite{sturm2012space,pcs16}, measure networks give a general setting for defining GW distances (which were originally defined only for metric measure spaces \cite{Memoli2007}). 

In the following, we use tools from measure theory. Given a measurable map $f:X \to Y$ from a Borel measure space $(X,\mu)$ to a Borel space $Y$, the \emph{pushforward of $\mu$} is the measure $f_\# \mu$ on $Y$ defined by $f_\# \mu (U) = \mu(f^{-1}(U))$ for all Borel sets $U \subset Y$. For a measurable function $f:X \to \Rspace$ and $p \in [1,\infty)$, the $L^p$ norm of $f$ is
\[
\|f\|_{L^p(\mu)} = \left(\int_X \lvert f(x) \rvert^p \; \mu(dx)\right)^{1/p},
\]
provided this integral converges. For $p = \infty$, $\|\cdot\|_{L^\infty(\mu)}$ is the \emph{essential supremum},
\[
\| f\|_{L^\infty(\mu)} = \inf\{c \geq 0 \mid \lvert f(x) \rvert \leq c \mbox{ for $\mu$-almost every $x \in X$}\},
\]
provided this is finite. Given two Borel measure spaces $(X,\mu)$ and $(Y,\nu)$, the \emph{product measure} $\mu \otimes \nu$ on $X \times Y$ satisfies $\mu \otimes \nu (U \times V) = \mu(U)\cdot \nu(V)$ for all Borel $U \subset X$ and $V \subset Y$.

\begin{definition}
\label{def:coupling}
Given two probability spaces $(X,\mu)$ and $(Y,\nu)$, a \emph{coupling} $\pi$ is a probability measure on $X\times Y$ satisfying $(p_X)_\# \pi = \mu$ and $(p_Y)_\# \pi = \nu$, where $p_X:X \times Y \to X$ and $p_Y:X \times Y \to Y$ are coordinate projections. That is, the marginals of $\pi$ are $\mu$ and $\nu$, respectively. The collection of couplings between $\mu,\nu$ is denoted $\mathcal{C}(\mu,\nu)$.
\end{definition}

\begin{definition}[Gromov-Wasserstein Distance]
For measure networks $N = (X,\mu,\omega)$ and $N' = (X',\mu',\omega')$ and $p \in [1,\infty]$, the \emph{$p$-distortion} functional is
$
\mathrm{dis}^{\mathcal{N}}_p = \mathrm{dis}^{\mathcal{N}}_{N,N',p}: \mathcal{C}(\mu,\mu') \to \Rspace
$
defined by
\begin{equation}\label{eqn:GW_loss}
\mathrm{dis}^{\mathcal{N}}_p(\pi) = \| \omega - \omega' \|_{L^p(\pi \otimes \pi)},
\end{equation}
where $\omega - \omega'$ is understood as the function $X \times X' \times X \times X' \to \Rspace$ defined by $(x,x',y,y') \mapsto \omega(x,y) - \omega'(x',y')$.
Equivalently, for $p < \infty$, \eqref{eqn:GW_loss} may be expanded as, 
\[
\mathrm{dis}^{\mathcal{N}}_p(\pi) = \left(\int_{X \times X'} \int_{X \times X'} \mid\omega(x,y) - \omega'(x',y')\mid^p 
\pi(dx \times dx') \pi(dy \times dy')\right)^{1/p}.
\]

The \emph{Gromov-Wasserstein (GW) $p$-distance} is then defined to be
\begin{equation}
\label{eqn:GW_distance}
    d_{\mathcal{N},p}(N,N') := \inf_{\pi \in \mathcal{C}(\mu,\mu')} \mathrm{dis}^{\mathcal{N}}_p(\pi).
\end{equation}
\end{definition}

An \emph{optimal coupling}---i.e. a coupling $\pi$ realizing \eqref{eqn:GW_distance}---always exists \cite[Theorem 2.2]{ChowdhuryMemoli2019}. Intuitively, an optimal coupling  provides a \emph{soft matching} between the points in the measure networks which minimally distorts their network functions.

GW $p$-distance is known to define a pseudometric on $\mathcal{N}$ \cite[Theorem 2.3]{ChowdhuryMemoli2019} and the distance-0 equivalence classes are well-understood. We define measure networks $N, N'$ to be \emph{weakly isomorphic} if $d_{\mathcal{N},p}(N,N') = 0$. It follows from \cite[Theorem 2.4]{ChowdhuryMemoli2019} that $N = (X,\mu,\omega),N' = (X',\mu',\omega')$ are weakly isomorphic if and only if there exists a measure network $\overline{N} = (\overline{X},\overline{\mu},\overline{\omega})$ and measure-preserving maps $\phi:\overline{X} \to X$ and $\phi': \overline{X} \to X'$ such that $\omega(\phi(x),\phi(y)) = \omega'(\phi'(x),\phi'(y)) = \overline{\omega}(x,y)$ for $\overline{\mu} \otimes \overline{\mu}$-almost every $(x,y) \in \overline{X} \times \overline{X}$.

\begin{remark}
The notion of weak isomorphism is introduced because it is a relaxation of the more obvious notion of a strong isomorphism. Following \cite{ChowdhuryMemoli2019}, a \emph{strong isomorphism} from $N$ to $N'$ is defined to be a bijective measure-preserving map $\phi:X \to X'$ with measure-preserving inverse such that for all $x,y \in X$, $\omega(x,y) = \omega'(\phi(x),\phi(y))$. If a strong isomorphism exists, then $N$ and $N'$ are weakly isomorphic, but the converse to this statement does not hold; see \cite[Example 2.1]{ChowdhuryMemoli2019}.
\end{remark}

\subsection{Measure Hypernetworks and Hypernetwork Distance}
\label{sec:hypernetworks}

A \emph{hypergraph} is a pair $(X,Y)$, where $X$ is a set of \emph{nodes} and $Y$ is a collection of subsets of $X$, each of which is called a \emph{hyperedge}. \autoref{fig:hypergraph-a-b} shows some simple hypergraphs and different hypergraph visualization techniques.

\begin{figure}[!ht]
	\centering
	\includegraphics[width=0.9\columnwidth]{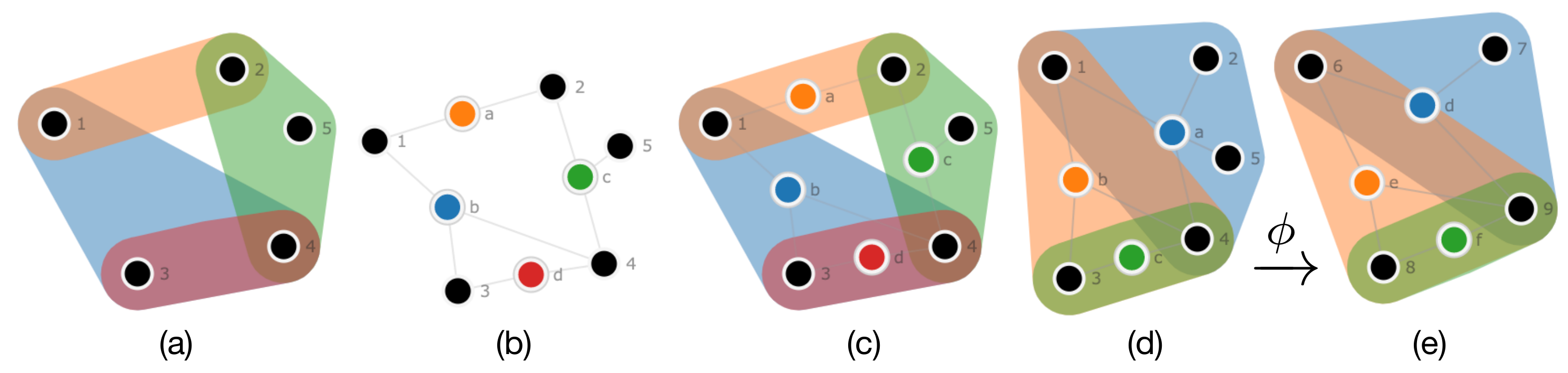}
	\caption{(a) A hypergraph visualized as a Venn diagram; nodes are black, and hyperedges are colored convex hulls. (b) Replacing each hyperedge in the Venn diagram with its own node representation, we visualize the same hypergraph as an \emph{incidence graph} (see~\autoref{sec:graphification} for theoretical properties of this representation). (c) Hybrid Venn diagram and incidence graph visualization. (d)-(e) An example of a \emph{morphism} between hypergraphs---categorical structures on the space of hypergraphs will be studied in detail in~\autoref{sec:graphification}.}
	\label{fig:hypergraph-a-b}
\end{figure}

The notion of a hypergraph can be formalized~\cite{DorflerWaller1980} as a triple $(X,Y,\iota)$, where $X$ and $Y$ are sets and $\iota:Y \to \mathcal{P}(X)$ is a map from $Y$ to the power set of $X$, assigning a subset of $X$ to each point in $Y$. One could equivalently consider a triple $(X,Y,\omega)$, where $\omega:X \times Y \to \{0,1\}$ is an indicator function encoding containment of a point $x \in X$ in $y \in Y$ by $\omega(x,y)=1$. Inspired by the definition of a measure network given above, we now give a generalization of the notion of a hypergraph, which introduces probability measures on $X$ and $Y$ and allows for more general functions $\omega$. 

\begin{definition}
\label{def:hyprenetwork}
A \emph{measure hypernetwork} is a quintuple 
$H = (X,\mu,Y,\nu,\omega)$, 
where $(X,\mu)$ and $(Y,\nu)$ are Polish spaces endowed with Borel probability measures and 
$\omega: X \times Y \to \Rspace$
is a measurable, bounded \emph{hypernetwork function}. We take the simplifying conventions that $\mu$ and $\nu$ are fully supported and that $\omega$ takes non-negative values. Let $\mathcal{H}$ denote the collection of all hypernetworks.
\end{definition}

\begin{example}[Hypergraphs]
\label{example:hypergraph-a}
Our motivating examples of hypernetworks arise from classical hypergraphs. To emphasize the distinction between the classical setting and our generalized measure hypernetworks, we refer to a pair $(X,Y)$ where $X$ is a finite set of nodes and $Y \subset \Pcal(X)$ is a set of hyperedges as a \emph{combinatorial hypergraph}. 

A combinatorial hypergraph $(X,Y)$ gives rise to a number of representations as a measure hypernetwork. For example, we may endow $X$ and $Y$ with  uniform measures $\mu$ and $\nu$, respectively; \ie, $\mu(x) = {1}/{\lvert X \rvert}$ for any $x \in X$ and $\mu(y) = 1/{\lvert Y \rvert}$ for any $y \in Y$.
We may define a hypernetwork function $\omega:X \times Y \to \Rspace$ by the \emph{incidence relation}:
\[
\omega(x,y) := \left\{\begin{array}{cl}
1 & \mbox{if $x \in y$} \\
0 & \mbox{otherwise.}
\end{array}\right.
\]
Alternatively, $\omega$ may be defined by a \emph{shortest path relation}: (a) if $x \in y$, then $\omega(x,y)=0$; (b) if $x \notin y$, then $\omega(x,y)$ is the length of the shortest \emph{hyperedge path}---a sequence of hyperedges with nontrivial sequential pairwise overlap, with length equal to the sum of the size of the overlaps---from any hyperedge $y'$ containing $x$ to hyperedge $y$. This method for transforming a hypergraph into a measure hypernetwork assumes that the hypergraph is \emph{connected}: any two hyperedges are joined by a hyperedge path. For the example hypergraph shown in~\autoref{fig:hypergraph-a-b}c, we have 
$X = \{1,2,3,4,5\}$, $Y = \{a, b, c, d\}$, $\mu(x) = 1/5$ for each $x \in X$, and $\nu(y) = 1/4$ for each $y \in Y$. 
If $\omega$ encodes the incidence relation or the shortest path relation, we have, respectively,   
\[
\omega = 
\begin{bmatrix}
1 & 1 & 0 & 0 \\
1 & 0 & 1 & 0 \\
0 & 1 & 0 & 1 \\
0 & 1 & 1 & 1 \\
0 & 0 & 1 & 0 \\
\end{bmatrix}, 
\text{ or }
\omega = 
\begin{bmatrix}
0 & 0 & 1 & 2 \\
0 & 1 & 0 & 1 \\
1 & 0 & 1 & 0 \\
1 & 0 & 0 & 0 \\
1 & 1 & 0 & 1 \\
\end{bmatrix},
\] 
where the matrices encode the function $\omega$ by indexing rows by $X$ and columns by $Y$.
\end{example}

\begin{remark}
The constructions in Example~\autoref{example:hypergraph-a} involve several choices: a measure on the nodes, a measure on the hyperedges and a hypernetwork function. There are many more ways to model a hypergraph as a measure hypernetwork; some are described in detail in~\autoref{sec:examples}. For instance, one may encode the degree information of a hypergraph in constructing its corresponding hypernetwork. In general, we consider the design of a measure hypernetwork representation of a hypergraph as treating the assignment of node and hyperedge weights $\mu$ and $\nu$, respectively, and hypernetwork function $\omega$ independently. Intuitively, $\mu$ and $\nu$ are treated as measuring the ``importance" of each node and hyperedge in the model (e.g., uniform weights mean equal importance in the analysis), while $\omega$ encodes interactions between nodes and hyperedges in a measure-independent manner. 
\end{remark}

\begin{example}[Data Matrices]\label{ex:data_matrices}
The main motivation for introducing the co-optimal transport framework in \cite{TitouanRedkoFlamary2020} was to compare \emph{data matrices}; i.e. rectangular matrices used to store vector-valued datasets. Such an object is represented in our formalism by taking $X$ to be a set of samples, $Y$ to be a set of features (in the machine learning sense), $\mu$ and $\nu$ to be some data-dependent distributions and defining $\omega(x,y)$ to be the value of feature $y$ on sample $x$.
\end{example}

\begin{example}[Continuous Examples]
Hypergraphs and data matrices involve finite sets, whereas we are working in a generality that allows infinite sets. A theoretical motivation for allowing infinite sets is that, as we will see in \autoref{thm:pseudometric}, this is necessary to guarantee completeness for the metric on hypernetworks defined below. More concretely, there are many interesting instances of infinite measure hypernetworks. For example, a hypernetwork $H = (X,\mu,Y,\nu,\omega)$ can encode a \emph{machine learning problem}, where $X$ is a \emph{data space} (say, $\Rspace^n$), $\mu$ is a distribution of data in $X$, $Y$ is a \emph{parameter space} (say, $\Rspace^m$), $\nu$ is a distribution representing constraints on the parameters, and $\omega:X \times Y \to \Rspace$ is considered as a parametric family of machine learning models; model training involves considering $\mu$ as a distribution of training data and optimizing an objective function of $\omega$ over parameters $Y$.  As another example, suppose that $(X,\mu)$ and $(Y,\nu)$ are bounded, measured subsets of a common ambient metric space $(Z,d_Z)$ and define $\omega:X \times Y \to Z$ by $\omega(x,y) = d_Z(x,y)$; this is the sort of setup that arises in a metric construction of Sturm, related to the usual embedding formulation of Gromov-Hausdorff distance  \cite{sturm2006geometry}. See \cite{mutlu2016bipolar} for other examples along these lines.
\end{example}

We are now prepared to define our distance between measure hypernetworks, which is a direct extension of the co-optimal transport framework of Redko {\etal} introduced in \cite{TitouanRedkoFlamary2020}.

\begin{definition}
\label{def:coot-distance}
Let $H = (X,\mu,Y,\nu,\omega), H' = (X',\mu',Y',\nu',\omega') \in \mathcal{H}$. For $p \in [1,\infty]$, the \emph{$p$-th co-optimal transport distortion} is the functional 
$
\mathrm{dis}_p^\mathcal{H} = \mathrm{dis}_{H,H',p}^\mathcal{H}: \mathcal{C}(\mu,\mu') \times \mathcal{C}(\nu,\nu') \to \Rspace
$
defined by
\begin{equation}
\label{eqn:COOT_loss}
\mathrm{dis}_p^\mathcal{H}(\pi,\xi) = \|\omega - \omega'\|_{L^p(\pi \otimes \xi)},
\end{equation}
where $\omega - \omega'$ is understood as the function $X \times X' \times Y \times Y' \to \Rspace$ taking $(x,x',y,y')$ to $\omega(x,y) - \omega'(x',y')$. For $p < \infty$, \eqref{eqn:COOT_loss} reads
\[
\mathrm{dis}_p^\mathcal{H}(\pi,\xi) = \left( \int_{Y \times Y'} \int_{X \times X'} \lvert \omega(x,y) - \omega'(x',y')\rvert^p \pi(dx \times dx') \xi(dy \times dy') \right)^{1/p}.
\]

The \emph{hypernetwork $p$-distance} is then defined to be
\begin{equation}\label{eqn:distance}
    d_{\mathcal{H},p}(H,H')= \inf_{\pi \in \mathcal{C}(\mu,\mu')} \inf_{\xi \in \mathcal{C}(\nu,\nu')} \mathrm{dis}_p^\mathcal{H}(\pi,\xi).
\end{equation}
\end{definition}

Similar to the GW setting, an important feature of this metric is that its practical computation involves determining optimal couplings $\pi$ and $\xi$ of both $X,X'$ and $Y,Y'$, respectively. In the hypergraph setting, these couplings correspond to soft matchings of both node sets \emph{and} hyperedge sets. Computational examples are provided in~\autoref{sec:soft-matching-toy}.

We emphasize that $d_{\mathcal{H},p}$ is a straightforward extension of the co-optimal transport  distance defined in \cite{TitouanRedkoFlamary2020}; there the authors restricted their attention to  measure hypernetworks whose underlying sets are finite, specifically focusing on data matrices. Our extension of \cite{TitouanRedkoFlamary2020} encompasses the infinite examples described above, and the generalization to infinite spaces is necessary in order to get a complete metric space---see  \autoref{thm:pseudometric} below---but practical computations will always reduce back to the finite setting.

\subsection{Properties of Hypernetwork Distance} 
\label{sec:properties}

We now describe some basic theoretical properties of the hypernetwork distance $d_{\mathcal{H},p}$. Our first result shows that $d_{\mathcal{H},p}$ is a peudometric on the space of measure hypernetworks. We say that $H$ and $H'$ are \emph{weakly isomorphic} if $d_{\mathcal{H},p}(H,H') = 0$---a more geometric characterization of weak isomorphism, along the lines of the characterization for $d_{\mathcal{N},p}$ in~\autoref{sec:networks}, is given below in Proposition \ref{prop:weak_isomorphism}.

\begin{theorem}
\label{thm:pseudometric}
The hypernetwork $p$-distance $d_{\mathcal{H},p}$ is a pseudometric on $\mathcal{H}$. The induced metric on the space of hypernetworks considered up to weak isomorphism is complete and geodesic.
\end{theorem}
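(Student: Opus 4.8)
The plan is to establish the three assertions of \autoref{thm:pseudometric} in sequence, following the template set by the Gromov--Wasserstein theory for measure networks (\cite{ChowdhuryMemoli2019,sturm2012space}) but adapting each step to the bipartite, co-optimal transport setting. First, for the pseudometric claim I would verify symmetry (immediate from the definition, since a pair $(\pi,\xi)$ of couplings for $(H,H')$ transposes to a pair for $(H',H)$ with the same distortion) and the vanishing of $d_{\mathcal{H},p}(H,H)$ (use the diagonal couplings $(\mathrm{id}\times\mathrm{id})_\#\mu$ and $(\mathrm{id}\times\mathrm{id})_\#\nu$). The substantive part is the triangle inequality: given $H, H', H''$ with near-optimal coupling pairs $(\pi_{12},\xi_{12})$ for $(H,H')$ and $(\pi_{23},\xi_{23})$ for $(H',H'')$, I would invoke a gluing lemma to produce couplings $\pi_{13}\in\mathcal{C}(\mu,\mu'')$ and $\xi_{13}\in\mathcal{C}(\nu,\nu'')$ supported compatibly with the intermediate space, then bound $\|\omega-\omega''\|_{L^p(\pi_{13}\otimes\xi_{13})}$ by $\|\omega-\omega'\|_{L^p(\pi_{12}\otimes\xi_{12})}+\|\omega'-\omega''\|_{L^p(\pi_{23}\otimes\xi_{23})}$ via the triangle inequality for the $L^p$ norm on the glued measure $\pi_{123}\otimes\xi_{123}$ on $X\times X'\times X''\times Y\times Y'\times Y''$. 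The key subtlety, and the one point where the bipartite structure matters, is that the gluing must be done \emph{separately} on the node side and the hyperedge side — this is fine because $\mathrm{dis}_p^{\mathcal{H}}$ is a norm with respect to the \emph{product} measure $\pi\otimes\xi$, so the two gluings never need to interact.

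Second, for existence of an optimal coupling pair (needed to make the infima attained, which simplifies the completeness argument) I would argue that $\mathcal{C}(\mu,\mu')\times\mathcal{C}(\nu,\nu')$ is compact in the weak topology (each factor is, by Prokhorov, since the marginals are fixed and Polish) and that $(\pi,\xi)\mapsto\mathrm{dis}_p^{\mathcal{H}}(\pi,\xi)$ is lower semicontinuous — this follows because $\pi\otimes\xi$ depends continuously on $(\pi,\xi)$ for the weak topology and $\|\omega-\omega'\|_{L^p(\cdot)}$ is weakly l.s.c.\ in the measure (boundedness of $\omega,\omega'$ handles the $p=\infty$ case via a standard truncation/approximation argument). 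This mirrors \cite[Theorem 2.2]{ChowdhuryMemoli2019}.

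Third, for completeness I would take a $d_{\mathcal{H},p}$-Cauchy sequence $(H_n)$ of hypernetworks, pass to a rapidly converging subsequence, and build a limit on an infinite product (or a projective-limit-type) space: choose optimal coupling pairs $(\pi_n,\xi_n)\in\mathcal{C}(\mu_n,\mu_{n+1})\times\mathcal{C}(\nu_n,\nu_{n+1})$, glue all the $\pi_n$ into a single measure on $\prod_n X_n$ and all the $\xi_n$ into one on $\prod_n Y_n$ (Kolmogorov extension), then show the coordinate sequences of hypernetwork functions $\omega_n$ form Cauchy sequences in $L^p$ of the appropriate product measure and hence converge to a limit function $\omega_\infty$ on $\mathrm{supp}$ of the limit measures; restricting to supports restores the full-support convention, and one checks the bounded/measurable requirements survive. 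This is exactly where infinite underlying sets are unavoidable. The main obstacle I anticipate is the bookkeeping in this completeness argument — tracking that the $L^p$ errors telescope correctly across the two separate product constructions and that the limiting function is well-defined $\mu_\infty\otimes\nu_\infty$-a.e.\ — together with verifying $d_{\mathcal{H},p}(H_n,H_\infty)\to 0$ by projecting the glued measure. Finally, for the geodesic claim I would use the standard displacement-interpolation construction: given $H_0, H_1$ with an optimal pair $(\pi,\xi)$, define $H_t$ on $X_0\times X_1$ (resp.\ $Y_0\times Y_1$) with measure $\pi$ (resp.\ $\xi$) and hypernetwork function the affine interpolant $\omega_t(x,y)=(1-t)\omega_0(p_{X_0}x,p_{Y_0}y)+t\,\omega_1(p_{X_1}x,p_{Y_1}y)$, then verify $d_{\mathcal{H},p}(H_s,H_t)=|t-s|\,d_{\mathcal{H},p}(H_0,H_1)$ using $(\mathrm{id},\mathrm{id})$-type couplings for the upper bound and the triangle inequality for the lower bound; the only thing to check is that $\omega_t$ is again nonnegative and bounded, which is clear since it is a convex combination of such functions. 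Each of these four pieces has a direct analogue in \cite{ChowdhuryMemoli2019,sturm2012space,chowdhury2020gromov}, and I expect the proof to amount to checking that the product structure $\pi\otimes\xi$ does not obstruct any of those arguments.
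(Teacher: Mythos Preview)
Your proposal is correct and follows essentially the same approach as the paper: separate gluing on the node and hyperedge sides for the triangle inequality, compactness of $\mathcal{C}(\mu,\mu')\times\mathcal{C}(\nu,\nu')$ plus lower semicontinuity of the distortion for existence of minimizers, a projective-limit construction on $\prod_n X_n$ and $\prod_n Y_n$ for completeness, and the affine interpolation $\omega_t=(1-t)\omega+t\omega'$ on $(\mathrm{supp}(\pi),\pi,\mathrm{supp}(\xi),\xi)$ for the geodesic. The only cosmetic difference is that the paper verifies the geodesic property via the one-sided inequality $d_{\mathcal{H},p}(\gamma(s),\gamma(t))\leq (t-s)\,d_{\mathcal{H},p}(H,H')$ and cites a standard lemma that this suffices, rather than checking both bounds directly.
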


The proof of~\autoref{thm:pseudometric} adapts techniques of \cite{ChowdhuryMemoli2019} and \cite{sturm2012space} in the GW setting. The main idea is that GW distance involves computing an integral over a product of a coupling with itself $\pi \otimes \pi$; the proofs can be adapted to handle the co-optimal transport distance, which integrates over a product of distinct couplings $\pi \otimes \xi$. For completeness, we have included the details of the proof in the Appendix~\ref{sec:proofs_properties}. In particular, we construct an explicit geodesic between the equivalence classes of $H$ and $H'$ of the form  
\[
\gamma(t) = \left(\mathrm{supp}(\pi), \pi, \mathrm{supp}(\xi), \xi, (1-t)\omega + t \omega'\right),
\]
where $\pi$ and $\xi$ are optimal couplings of the hypernetworks.

Next, we give a characterization of weak isomorphism in the measure hypernetwork setting.

\begin{definition}
\label{def:basic-weak-iso}
A \emph{basic weak isomorphism} of measure hypernetworks $H$ and $H'$ is a pair of measure-preserving maps $\phi:X \to X'$ and $\psi:Y \to Y'$ such that $\omega(x,y) = \omega'(\phi(x),\psi(y))$ for $\mu \otimes \nu$-almost every $(x,y) \in X \times Y$. 

Similar to the measure network setting, we define a \emph{strong isomorphism} between measure hypernetworks to be a basic weak isomorphism $(\phi,\psi)$ such that equality of hypernetwork functions holds for all points $(x,y) \in X \times Y$ and both maps are bijective with measure-preserving inverses.
\end{definition}

\begin{proposition}\label{prop:weak_isomorphism}
A pair of measure hypernetworks $H=(X,\mu,Y,\nu,\omega)$ and $H'=(X',\mu',Y',\nu',\omega')$ are weakly isomorphic if and only if there exists a measure hypernetwork $\overline{H} = (\overline{X},\overline{\mu},\overline{Y},\overline{\nu},\overline{\omega})$ and basic weak isomorphisms $(\phi,\psi)$ from $\overline{H}$ to $H$ and $(\phi',\psi')$ from $\overline{H}$ to $H'$.
\end{proposition}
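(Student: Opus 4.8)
The plan is to mirror the proof of the analogous characterization of weak isomorphism for measure networks (which the excerpt attributes to \cite[Theorem 2.4]{ChowdhuryMemoli2019}), adapting it to the co-optimal transport setting where one integrates against a product of two \emph{distinct} couplings $\pi \otimes \xi$ rather than $\pi \otimes \pi$. The ``if'' direction is the easy one: given $\overline{H}$ together with basic weak isomorphisms $(\phi,\psi) \colon \overline{H} \to H$ and $(\phi',\psi') \colon \overline{H} \to H'$, I would build a coupling $\pi \in \mathcal{C}(\mu,\mu')$ as the pushforward $(\phi,\phi')_\# \overline{\mu}$ and a coupling $\xi \in \mathcal{C}(\nu,\nu')$ as $(\psi,\psi')_\# \overline{\nu}$; the measure-preserving property of the four maps guarantees the correct marginals. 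Then $\mathrm{dis}_p^{\mathcal{H}}(\pi,\xi) = \| \omega - \omega'\|_{L^p(\pi\otimes\xi)}$ can be rewritten, via the change-of-variables formula for pushforwards, as an integral over $\overline{X}\times\overline{X}\times\overline{Y}\times\overline{Y}$ of $|\omega(\phi(\bar x),\psi(\bar y)) - \omega'(\phi'(\bar x),\psi'(\bar y))|^p$ against $\overline{\mu}\otimes\overline{\mu}\otimes\overline{\nu}\otimes\overline{\nu}$ — only the diagonal terms survive — and both inner expressions equal $\overline{\omega}(\bar x,\bar y)$ for $\overline{\mu}\otimes\overline{\nu}$-a.e.\ $(\bar x, \bar y)$, so the integrand vanishes almost everywhere. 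Hence $d_{\mathcal{H},p}(H,H') = 0$.

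For the ``only if'' direction, suppose $d_{\mathcal{H},p}(H,H')=0$. Since optimal couplings exist (by the same compactness argument as in the GW case, which applies verbatim here because $\mathcal{C}(\mu,\mu')$ and $\mathcal{C}(\nu,\nu')$ are weak-$*$ compact and $\mathrm{dis}_p^{\mathcal H}$ is lower semicontinuous), pick optimal $\pi$ and $\xi$ with $\mathrm{dis}_p^{\mathcal H}(\pi,\xi) = 0$, i.e.\ $\omega(x,y) = \omega'(x',y')$ for $\pi\otimes\xi$-a.e.\ $(x,x',y,y')$. The natural candidate for the ``common refinement'' is
\[
\overline{H} = \bigl(\mathrm{supp}(\pi),\ \pi,\ \mathrm{supp}(\xi),\ \xi,\ \overline{\omega}\bigr),
\]
where $\overline{\omega}\bigl((x,x'),(y,y')\bigr) := \omega(x,y)$, together with the coordinate projections $\phi = p_X$, $\phi' = p_{X'}$ (restricted to $\mathrm{supp}(\pi)$) and $\psi = p_Y$, $\psi' = p_{Y'}$ (restricted to $\mathrm{supp}(\xi)$). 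These projections are measure-preserving by the definition of a coupling, and $\mathrm{supp}(\pi)$, $\mathrm{supp}(\xi)$ are Polish (closed subsets of Polish spaces) with fully supported measures by construction, so $\overline{H}$ is a legitimate measure hypernetwork. One must then check the four basic-weak-isomorphism identities: $\overline{\omega}((x,x'),(y,y')) = \omega(\phi,\psi)$ holds by definition of $\overline\omega$; and $\overline{\omega}((x,x'),(y,y')) = \omega'(\phi',\psi') = \omega'(x',y')$ holds for $\pi\otimes\xi$-a.e.\ point, which is exactly the conclusion $\mathrm{dis}_p^{\mathcal H}(\pi,\xi)=0$ gives us. The one technical nuisance is that $\omega = \omega'$ is asserted $\pi\otimes\xi$-almost everywhere, not on the nose; but since a basic weak isomorphism only demands the identity $\mu\otimes\nu$-a.e.\ (here $\overline\mu\otimes\overline\nu = \pi\otimes\xi$), this is precisely what is needed, and no modification of $\overline\omega$ on a null set is even required.

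The main obstacle — really the only subtle point — is verifying that the restriction of a coordinate projection to $\mathrm{supp}(\pi)$ is measure-preserving onto the full $(X,\mu)$, i.e.\ that $(p_X)_\#(\pi|_{\mathrm{supp}(\pi)}) = \mu$ and that $\mathrm{supp}(\mu) = X$ is respected; this follows because $\pi(\mathrm{supp}(\pi)) = 1$ and the marginal condition $(p_X)_\#\pi = \mu$ together force $(p_X)$ restricted to $\mathrm{supp}(\pi)$ to still push $\pi$ forward to $\mu$, and $X = \mathrm{supp}(\mu)$ by the standing convention in \autoref{def:hyprenetwork}. I would cite the measure-network analogue's handling of this point (it is identical). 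Everything else is bookkeeping: the proof decouples cleanly into the ``node part'' (controlled by $\pi$) and the ``hyperedge part'' (controlled by $\xi$), and since these two couplings never interact except through the product $\pi\otimes\xi$ in the integrand, no genuinely new ideas beyond the GW proof are required.
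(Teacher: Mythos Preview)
Your approach is essentially identical to the paper's: for the ``only if'' direction you both take optimal couplings $(\pi,\xi)$ realizing zero distortion, set $\overline{X}=\mathrm{supp}(\pi)$, $\overline{Y}=\mathrm{supp}(\xi)$, $\overline{\mu}=\pi$, $\overline{\nu}=\xi$, $\overline{\omega}((x,x'),(y,y'))=\omega(x,y)$, and take coordinate projections as the basic weak isomorphisms; for the ``if'' direction you both push forward $\overline{\mu}$ and $\overline{\nu}$ along $(\phi,\phi')$ and $(\psi,\psi')$ and apply change of variables.

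One slip to fix: in the ``if'' direction you write that the distortion becomes an integral over $\overline{X}\times\overline{X}\times\overline{Y}\times\overline{Y}$ against $\overline{\mu}\otimes\overline{\mu}\otimes\overline{\nu}\otimes\overline{\nu}$, with a remark that ``only the diagonal terms survive.'' This is not right---since $\pi=(\phi,\phi')_\#\overline{\mu}$ is a measure on $X\times X'$ pushed from a single copy of $\overline{X}$, the change of variables yields an integral over $\overline{X}\times\overline{Y}$ against $\overline{\mu}\otimes\overline{\nu}$ (exactly what the paper writes). There are no ``diagonal terms'' to isolate; the integrand $|\omega(\phi(\bar x),\psi(\bar y))-\omega'(\phi'(\bar x),\psi'(\bar y))|^p$ already vanishes $\overline{\mu}\otimes\overline{\nu}$-a.e.\ by the basic weak isomorphism hypothesis. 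This is a bookkeeping error rather than a gap in strategy, but as written that sentence is incorrect.
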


The proof is similar to the one in the GW setting, but it involves some useful techniques, so we will include it here.  

\begin{proof}
For the forward direction, if $H$ and $H'$ are weakly isomorphic with coupling $(\pi,\xi)\in \mathcal{C}(\mu,\mu')\times\mathcal{C}(\nu,\nu')$ realizing $d_{\mathcal{H},p}(H,H') = 0$ (such a pair always exists; see Lemma \ref{lemma:realized} in the Appendix), for $\overline{H}$ define the following: $\overline{X}:=\mathrm{supp}(\pi)$, $\overline{\mu}:=\pi$, $\overline{Y}:=\mathrm{supp}(\xi)$ and $\overline{\nu}:=\xi$. Let $\phi:\overline{X}\rightarrow X$, $\phi':\overline{X}\rightarrow X'$, $\psi:\overline{Y}\rightarrow Y$, and $\psi':\overline{Y}\rightarrow Y'$ be coordinate projection maps. Each of these maps is measure-preserving by the marginal conditions on $\pi$ and $\xi$. We define $\overline{\omega}$ as the pullback function of the projection map; i.e., $\overline{\omega}((x,x'),(y,y')) = \omega(\phi(x),\psi(y))$. Then $d_{\mathcal{H},p}(H,H') = 0$ being realized by $\pi$ and $\xi$ implies that  $\overline{\omega}(\overline{x},\overline{y}) = \omega(\phi(\overline{x}),\psi(\overline{y}))=\omega'(\phi'(\overline{x}),\psi'(\overline{y}))$ for almost every $(\overline{x},\overline{y}) \in \overline{X} \times \overline{Y}$. 

For the reverse direction, we define couplings $\pi:= (\phi \times \phi')_\#\overline{\mu}$ and $\xi:=(\psi \times \psi')_\#\overline{\nu}$, which satisfy
\begin{align*}
    \mathrm{dis}_p^\mathcal{H}(\pi,\xi) &= \|\omega - \omega'\|_{L^p(\pi \otimes \xi)} \\
    &= \| \omega \circ (\phi \times \psi)-\omega' \circ (\phi' \times \psi') \|_{L^p(\overline{\mu} \otimes \overline{\nu})}  \\
    &\leq  \| \omega-\overline{\omega}\|_{L^p(\overline{\mu} \otimes \overline{\nu})} + \| \overline{\omega}-\omega'\|_{L^p(\overline{\mu} \otimes \overline{\nu})}  = 0,
\end{align*}
where the second line follows by the change-of-variables formula.
\end{proof}

We point out in the following example that the notion of basic weak isomorphism provides a formalism for describing a procedure used in hypergraph simplification, referred to as node and hyperedge collapses~\cite{ZhouRathorePurvine2022}.

\begin{example}[Node and Hyperedge Collapses as Basic Weak Isomorphisms]
\label{example:collapses}
In \cite{ZhouRathorePurvine2022}, simplification operations for hypergraphs called \emph{node collapses} and \emph{hyperedge collapses} are studied. 
As illustrated in~\autoref{fig:collapses}, node  collapse combines nodes that belong to exactly the same set of hyperedges into a single ``super-node'' (visualized by concentric ring glyph), while hyperedge collapse merges hyperedges that share exactly the same set of nodes into a ``super-hyperedge'' (visualized by a pie-chart glyph). 
This operation can be defined in generality for finite hypernetworks. 

Let $H = (X,\mu,Y,\nu,\omega)$ be a finite (i.e. $\lvert X \rvert, \lvert Y \rvert<\infty$) measured hypernetwork. Suppose that there exist $x_1,x_2 \in X$ such that $\omega(x_1,y) = \omega(x_2,y)$ for every $y \in Y$. Define a new hypernetwork $H' = (X',\mu',Y',\nu',\omega')$ with $X' = X \setminus \{x_2\}$, 
\[
\mu'(x) = \left\{\begin{array}{cl}
\mu(x) & x \neq x_1 \\
\mu(x_1) + \mu(x_2) & x = x_1,
\end{array}\right.
\]
$Y' = Y$, $\nu' = \nu$ and $\omega' = \omega \mid_{X' \times Y'}$. 
Then the map $\phi:X \to X'$ sending $x \mapsto x$ for $x \neq x_2$ and $x_2 \mapsto x_1$, together with the identity map $\psi:Y \to Y'$ define a basic weak isomorphism $H \to H'$. This basic weak isomorphism is called a \emph{node collapse} (see~\autoref{fig:collapses} left). The formal definition of a hyperedge collapse is similar (see~\autoref{fig:collapses} right). 

\begin{figure}[!ht]
	\centering		
	\includegraphics[width=1.0\columnwidth]{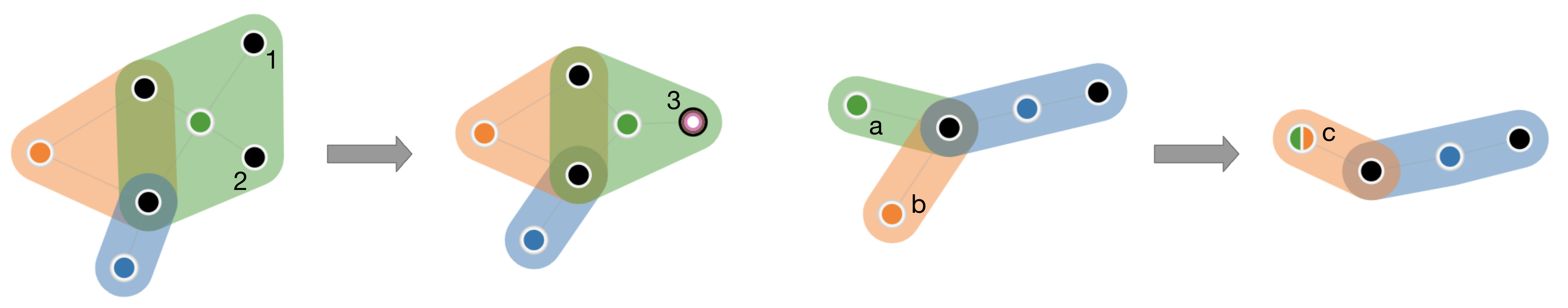}
	\vspace{-4mm}
	\caption{Node and hyperedge collapses. Left: nodes 1 and 2 merge into a super-node 3. Right: hyperedges a and b merge into a super-hyperedge c.}
	\label{fig:collapses}
\end{figure}
\end{example}

We note that the notion of weak isomorphism is strictly more general than basic weak isomorphism. As a simple example, consider the measure hypernetworks $H$ and $H'$ encoded by matrices
\[
\omega = \left[\begin{array}{ccc}
0 & 2 & 2 \\
1 & 2 & 2
\end{array}\right], 
\mu = \left[\begin{array}{c}
1/3 \\
2/3
\end{array}\right], 
\nu = \left[\begin{array}{c}
1/3 \\
1/3 \\
1/3
\end{array}\right] \mbox{ and } 
\omega' = \left[\begin{array}{cc}
0 & 2  \\
1 & 2  \\
1 & 2
\end{array}\right], 
\mu' = \nu, 
\nu' = \mu.
\]
Then there is no basic weak isomorphism $H \to H'$ or $H' \to H$. However, the measure hypernetworks are weakly isomorphic; a measure hypernetwork $\overline{H}$ as in Proposition \ref{prop:weak_isomorphism} is encoded by the matrices
\[
\overline{\omega} = \left[\begin{array}{ccc}
0 & 2 & 2 \\
1 & 2 & 2 \\
1 & 2 & 2
\end{array}\right], \,
\overline{\mu} = \overline{\nu}  = \left[\begin{array}{c}
1/3 \\
1/3 \\
1/3
\end{array}\right].
\]

\section{Graphification}
\label{sec:graphification}

A common technique in hypergraph analysis and machine learning is to transform a hypergraph into a traditional graph, which has a more tractable structure (\eg, \cite{agarwal2005beyond,pu2012hypergraph,SuranaChenRajapakse2021,xia2021self}). 
In this section, we formalize \emph{graphification} as the study of functors from the category of measure hypernetworks to the category of measure networks.

\subsection{Transformations of Hypergraphs to Graphs}\label{sec:transformations}

Let $H = (X, Y)$ be a (combinatorial) hypergraph with nodes $X = \{x_1, \cdots, x_n\}$ and hyperedges $Y = \{y_1, \cdots, y_m\}$, where each $y_i \subseteq X$. 
There are several transformations from a hypergraph to a graph: we focus on the  \emph{bipartite incidence graph} (also called the \emph{star expansion})~\cite{sun2008hypergraph}, the \emph{line graph}~\cite{BermondHeydemannSotteau1977}, and the \emph{clique expansion}~\cite{ZienSchlagChan1999}, shown in~\autoref{fig:hypergraph-dual}. 

\begin{figure}[!ht]
	\centering
	\includegraphics[width=0.9\columnwidth]{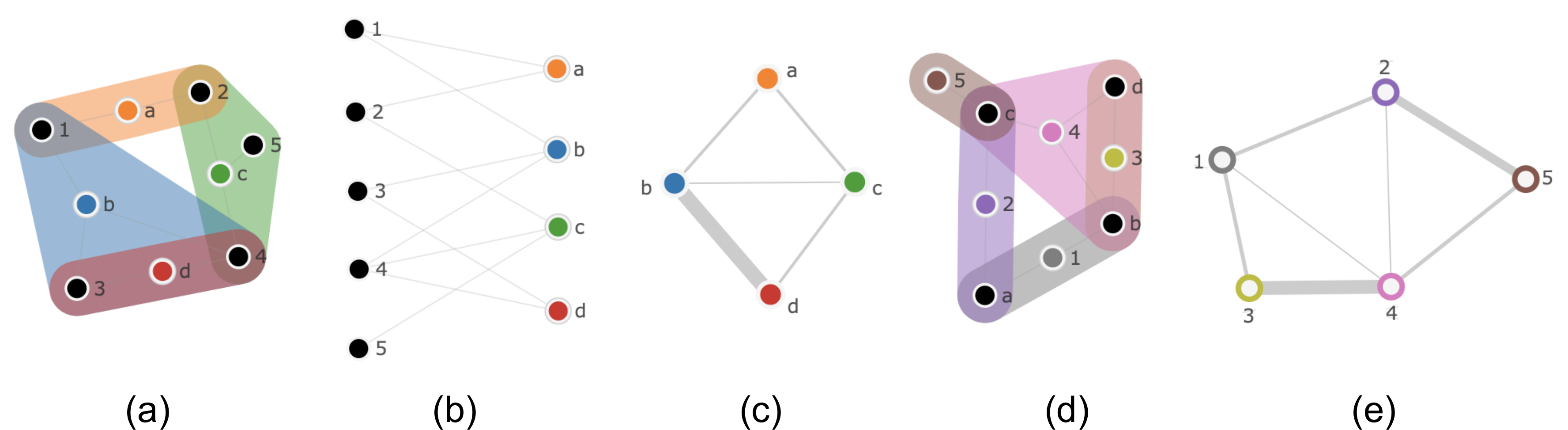}
	\caption{A hypergraph $H$ (a) with its bipartite incidence graph $\mathsf{B}(H)$ (b), line graph $\mathsf{L}(H)$ (c), dual hypergraph $H_*$ (d), and clique expansion $\mathsf{Q}(H)$ (e).}
	\label{fig:hypergraph-dual}
\end{figure}

A hypergraph $H$ may be represented by a bipartite incidence graph $\mathsf{B}(H)$ as follows: $X \cup Y$ forms a partition of the node  set of $\mathsf{B}(H)$, and $x \in X$ and $y \in Y$ are connected by an edge if and only if node $x$ is contained in hyperedge $y$ in $H$.  
The line graph $\mathsf{L}(H)$ of $H$ is a graph whose node set corresponds to the set of hyperedges of $H$;  two nodes are adjacent in $\mathsf{L}(H)$ if their corresponding hyperedges have a nonempty intersection in $H$. 
The clique expansion $\mathsf{Q}(H)$ of $H$ consists of node set $X$, and there is an edge $\{x_i, x_j\}$ in $\mathsf{Q}(H)$ if there exists some hyperedge $y \in Y$ such that $x_i, x_j \in y$.
That is, $\mathsf{Q}(H)$ is a graph constructed from $H$ by replacing each hyperedge with a clique among its nodes. 

The line graph and clique expansion are related through the concept of a \emph{dual hypergraph} $H_*$, which swaps the roles of nodes and hyperedges by considering the transpose of the incidence matrix of $H$: the clique expansion is the line graph of the dual, $\mathsf{Q}(H) = \mathsf{L}(H_*)$.

Finally, the edges in $\mathsf{B}(H)$, $\mathsf{L}(H)$, and $\mathsf{Q}(H)$ may be weighted based on  the similarities or dissimilarities between nodes and hyperedges of $H$. The particular weighting scheme is typically application-dependent or ad hoc---for example, a common practice \cite{rodri2002laplacian} is to weight an edge $\{x_i,x_j\}$ of $\mathsf{Q}(H)$ by the number of hyperedges in which $x_i$ and $x_j$ are commonly contained. In the following subsection, we develop principled models for these various transformations in the setting of measure hypernetworks.

In what follows, we take a categorical perspective on transformations from hypernetworks to networks, as described above. A function $\mathsf{F}:\mathcal{H} \to \mathcal{N}$ is called a \emph{graphification}. Given a graphification $\mathsf{F}$, the questions that we aim to address are:
\begin{itemize}
    \item Is $\mathsf{F}$ functorial (with respect to categorical structures defined below)?
    \item Is $\mathsf{F}$ Lipschitz with respect to $d_{\mathcal{H},p}$ and $d_{\mathcal{N},p}$?
\end{itemize}
Intuitively, functoriality tells us that the graphification process preserves relational information between hypergraphs---i.e., that it is \emph{structure-preserving}. Functoriality of the bipartite incidence, clique expansion and line graph maps for combinatorial hypergraphs was shown by D\"{o}rfler and Waller~\cite{DorflerWaller1980}; we aim to extend these results to our more general categories. The Lipschitz condition says that graphification is \emph{stable} in the sense that similar hypernetworks must be sent to similar networks.

\begin{remark}\label{rmk:hypergraphification}
Our current category-theoretic approach was  inspired by a line of work on the categorical aspects of (hierarchical) metric space and network partitioning schemes \cite{carlsson2018hierarchical,carlsson2013axiomatic,carlsson2021robust,carlsson2010characterization,carlsson2013classifying,carlsson2008persistent} and more general clustering schemes allowing overlapping clusters \cite{culbertson2016consistency,culbertson2018functorial}. A clustering algorithm can be interpreted as a function $\mathcal{N} \to \mathcal{H}$ (i.e., a \emph{hypergraphification}), and one could likewise ask about functoriality or Lipschitzness of various hypergraphifications. As an obvious example, consider the map which takes a measure network $(X,\mu,\omega)$ to the measure hypernetwork $(X,\mu,X,\mu,\omega)$; with the categorical structures defined in~\autoref{sec:network-category}, this map is functorial and it is clearly 1-Lipschitz (in fact, it is an isometric embedding when restricted to a certain subcategory of measure networks; see \cite[Proposition 3]{TitouanRedkoFlamary2020}). To keep things focused, we only consider graphifications in the present paper. We plan to explore theoretical properties of hypergraphifications in future work.
\end{remark}

\subsection{Categories of Networks}
\label{sec:network-category}

We now proceed with describing categorical structures on our spaces $\mathcal{H}$ and $\mathcal{N}$, starting with the network setting. 

\begin{definition}[Category of Measure Networks with Expansive Maps]
Let $N,N' \in \mathcal{N}$ be measure networks. A function $\phi:X \to X'$ between their underlying sets is called an \emph{expansive map} if $\omega(x,y) \leq \omega'(\phi(x),\phi(y))$ for every $(x,y) \in X \times X$. We will sometimes use the notation $N \xrightarrow[]{\phi} N'$ for an expansive map to emphasize its dependence on the network functions. Let $\Nmap$ denote the category whose objects are elements of $\mathcal{N}$ and whose morphisms are expansive maps. Composition of morphisms is function composition and the identity morphism is the identity map. 
\end{definition}

In the (combinatorial) graph theory literature, a common convention is to consider maps which preserve edge relations \cite{hell2004graphs}; that is, a morphism between combinatorial graphs $(V,E)$ and $(V',E')$ is a function $\phi:V \to V'$ such that $\{v,w\} \in E$ implies $\{\phi(v),\phi(w)\} \in E'$.  This property is generalized by expansive maps in the measure hypernetwork setting---indeed, if $N,N'$ are finite measure network representations of graphs, with $\omega$ and $\omega'$ binary adjacency functions, then the expansive condition says that $\phi$ takes edges to edges (if there is an edge joining vertices $x$ and $y$ in a network, then $\omega(x,y) = 1$, which forces $\omega(\phi(x),\phi(y))=1$, hence there is an edge joining $\phi(x)$ and $\phi(y)$). Thus, the category of combinatorial graphs embeds into $\mathcal{N}_{\mathrm{m}}$ as a full subcategory.

\begin{remark}
One could similarly define a \emph{non-expansive map} from $N$ to $N'$ to be a function $\phi:X \to X'$ with $\omega(x,y) \geq \omega'(\phi(x),\phi(y))$ for all $x,y \in X$. Non-expansive maps give the classical categorical structure for metric spaces, going back to Isbell \cite{isbell1964six}---in the metric space setting, this is the same as specifying that morphisms are 1-Lipschitz. This convention has been adopted in more general settings of networks (\eg,  \cite{culbertson2018functorial,carlsson2021robust}).  The appropriateness of the choice of either categorical structure depends on context: if one is considering measure networks as generalized metric measure spaces, then it makes more sense to consider the non-expansive (1-Lipschitz) category, whereas if one is considering measure networks as generalizing weighted adjacency structures, then the non-expansive category is more relevant. Since we are primarily interested in graphification functors, which are typically defined in terms of adjacency information, we focus on the category of expansive maps in this paper.
\end{remark}

It is natural to consider network morphisms which more strongly encode information about the measures. A first attempt would be to consider expansive maps which are also measure-preserving. This is too restrictive---for finite measure spaces, the set of measure-preserving maps between two  spaces is frequently empty. We give the following alternative, which defines a measured version of the Schmidt-Str{\"o}hlein categorical structure on graphs \cite[Chapter 7]{schmidt2012relations}, which consider morphisms between graphs given by relations between their node sets; we instead use couplings, which can be seen intuitively as weighted relations. The idea of defining morphisms between probability spaces as couplings has appeared previously in the literature, e.g., \cite{lawvere1962category, perrone2021} (see Remark \ref{rmk:lawvere_perrone}). Our notion of \emph{expansive coupling} defined below is, to our knowledge, novel.

\begin{definition}
Let $N,N' \in \mathcal{N}$ be measure networks. An \emph{expansive coupling} is a coupling $\pi$ of $\mu$ and $\mu'$ such that for $\pi \otimes \pi$-almost every pair $(x,x'),(y,y') \in X \times X'$, we have $\omega(x,y) \leq \omega'(x',y')$.
\end{definition}

Expansive couplings can be composed via a gluing operation, which takes some care to describe. We begin with a lemma, which is well-known in the optimal transport literature; see, e.g., \cite[Lemma 1.4]{sturm2012space} or \cite[Lemma 11.8.3]{villani2003topics}.

\begin{lemma}[Disintegration of Measures and the Gluing Lemma]\label{lem:gluing_lemma}
Let $(X,\mu),(X',\mu'),(X'',\mu'')$ be Polish spaces endowed with Borel probability measures and let $\pi \in \mathcal{C}(\mu,\mu')$ and $\pi' \in \mathcal{C}(\mu',\mu'')$.
\begin{enumerate}
\item For each $x' \in X'$, there exists a probability measure $\pi_{x'}$ on $X$  such that $\pi_{x'}(dx) = \pi(dx \times dx')$. The family $\{\pi_{x'}\}_{x' \in X'}$ is called a \emph{disintegration kernel for $\pi$ with respect to $X'$} and the $\pi_{x'}$ are uniquely determined for $\mu'$-almost every $x'$. Likewise, there exists a disintegration kernel $\{\pi_{x}\}_{x \in X}$ with respect to $X$, consisting of probability measures on $X'$ with analogous properties. 
\item There exists a unique probability measure $\pi' \boxtimes \pi$ on $X \times X' \times X''$ such that 
\[
(p_{X} \times p_{X'})_\# (\pi' \boxtimes \pi) = \pi \qquad \mbox{and} \qquad (p_{X'} \times p_{X''})_\# (\pi' \boxtimes \pi) = \pi'.
\]
This is called the \emph{gluing of $\pi$ and $\pi'$}. The gluing is characterized by the formula
\[
\pi' \boxtimes \pi (dx \times dx' \times dx'') = \pi'_{x'}(dx'') \pi_{x'}(dx) \mu'(dx').
\]
\end{enumerate}
\end{lemma}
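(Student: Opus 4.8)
The plan is to handle the two parts in order, reducing part~(1) to the standard disintegration theorem and then constructing the gluing in part~(2) explicitly from the resulting kernels.

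First I would establish part~(1). Since $X\times X'$ is a product of Polish spaces it is Polish, $\pi$ is a Borel probability measure on it, and the coordinate projection $p_{X'}:X\times X'\to X'$ is Borel with $(p_{X'})_\#\pi=\mu'$ by the marginal condition. The disintegration theorem for Borel probability measures on Polish spaces (see, e.g., \cite{villani2003topics}) then yields a $\mu'$-almost everywhere uniquely determined family $\{\tilde\pi_{x'}\}_{x'\in X'}$ of Borel probability measures on $X\times X'$, each carried by the fiber $p_{X'}^{-1}(x')=X\times\{x'\}$, such that $x'\mapsto\tilde\pi_{x'}(A)$ is measurable for every Borel $A$ and $\pi(A)=\int_{X'}\tilde\pi_{x'}(A)\,\mu'(dx')$. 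Pushing each $\tilde\pi_{x'}$ forward along the first-coordinate identification $X\times\{x'\}\cong X$ gives the probability measures $\pi_{x'}$ on $X$; the shorthand $\pi_{x'}(dx)=\pi(dx\times dx')$ merely records the Fubini-type identity $\int_{X\times X'}f\,d\pi=\int_{X'}\big(\int_X f(x,x')\,\pi_{x'}(dx)\big)\mu'(dx')$ for bounded measurable $f$, and measurability and a.e.-uniqueness are exactly the conclusions of the theorem. Interchanging the roles of $X$ and $X'$ (disintegrating against $\mu$ via $p_X$) produces $\{\pi_x\}_{x\in X}$, and applying the same theorem to $\pi'\in\mathcal{C}(\mu',\mu'')$ and $p_{X'}:X'\times X''\to X'$ produces the kernel $\{\pi'_{x'}\}_{x'\in X'}$ of probability measures on $X''$.

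Next, for part~(2) I would \emph{define} $\pi'\boxtimes\pi$ by the stated formula: for bounded measurable $f:X\times X'\times X''\to\Rspace$,
\[
\int f\,d(\pi'\boxtimes\pi):=\int_{X'}\left(\int_{X''}\int_X f(x,x',x'')\,\pi_{x'}(dx)\,\pi'_{x'}(dx'')\right)\mu'(dx').
\]
The inner double integral is a measurable function of $x'$ — verified first for $f$ of product form $g(x)h(x')k(x'')$ using the measurability of the two kernels, then for general bounded Borel $f$ by the functional monotone class theorem — so the outer integral is well defined, and taking $f\equiv 1$ shows that $\pi'\boxtimes\pi$ is a probability measure since $\pi_{x'}$, $\pi'_{x'}$ and $\mu'$ all are; countable additivity (monotone convergence) then upgrades the functional to a Borel measure on the Polish space $X\times X'\times X''$. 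The marginal identities follow by specialization: taking $f(x,x',x'')=g(x,x')$ collapses the formula, via the disintegration identity from part~(1), to $\int_{X'}\int_X g\,\pi_{x'}(dx)\,\mu'(dx')=\int g\,d\pi$, i.e.\ $(p_X\times p_{X'})_\#(\pi'\boxtimes\pi)=\pi$; taking $f(x,x',x'')=h(x',x'')$ gives $(p_{X'}\times p_{X''})_\#(\pi'\boxtimes\pi)=\pi'$ symmetrically. For the uniqueness clause, since the formula pins down $\pi'\boxtimes\pi$ outright, uniqueness of the gluing so defined is immediate; I would phrase the substantive content as the assertion that this canonical measure — the one under which $x$ and $x''$ are conditionally independent given $x'$ — realizes the two prescribed pair-marginals, and refer to \cite{sturm2012space, villani2003topics} for the complete account.

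The main obstacle here is bookkeeping rather than depth: all the measure-theoretic weight sits inside the disintegration theorem, and once the kernels $\{\pi_{x'}\}$ and $\{\pi'_{x'}\}$ are available together with their measurability, the construction and the marginal checks are routine Fubini-style manipulations. The one point that needs genuine care is the measurability of $x'\mapsto\int_{X''}\int_X f\,\pi_{x'}(dx)\,\pi'_{x'}(dx'')$ for \emph{all} bounded Borel $f$ — needed even to make sense of the defining formula — which I would obtain by the monotone-class argument sketched above; a secondary, minor wrinkle is the slight imprecision of the word ``unique,'' which the conditional-independence characterization resolves.
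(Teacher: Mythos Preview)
The paper does not prove this lemma at all: it is stated with the preface ``well-known in the optimal transport literature; see, e.g., \cite[Lemma 1.4]{sturm2012space} or \cite[Lemma 11.8.3]{villani2003topics}'' and then used as a black box. Your proposal is therefore not competing with a proof in the paper but rather supplying one, and what you have written is essentially the standard argument one finds in those references: invoke the disintegration theorem on Polish spaces to obtain the kernels, define the gluing by the displayed formula, and verify the marginals by Fubini/monotone-class bookkeeping. That is correct and complete at the level of a sketch.

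You are also right to flag the word ``unique.'' As literally stated, the two pair-marginal conditions alone do \emph{not} determine a unique measure on $X\times X'\times X''$ in general; what is unique is the measure given by the formula (equivalently, the one under which $X$ and $X''$ are conditionally independent given $X'$). The paper's phrasing conflates these, and your conditional-independence gloss is the right way to read it. This is a wrinkle in the lemma's statement, not in your argument.
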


\begin{definition}
    Let $N,N',N'' \in \mathcal{N}$ and let $\pi \in \mathcal{C}(\mu,\mu')$ and $\pi' \in \mathcal{C}(\mu',\mu'')$. We define the \emph{composition of $\pi$ and $\pi'$} to be 
    \[
    \pi' \bullet \pi := (p_X \times p_{X''})_\# (\pi' \boxtimes \pi).
    \]
\end{definition}

When working with expansive couplings, technical considerations require us to work with measure networks whose underlying topological spaces are compact. An element $N \in \mathcal{N}$ which has this property will be referred to as a \emph{compact measure network}. The need for this restriction is to get a well-defined composition operation for expansive couplings.

\begin{proposition}\label{prop:composition_preserves_expansiveness}
Let $N,N',N'' \in \mathcal{N}$ be compact measure networks and let $\pi \in \mathcal{C}(\mu,\mu')$ and $\pi' \in \mathcal{C}(\mu',\mu'')$ be expansive couplings. Then the composition $\pi' \bullet \pi$ is an expansive coupling of $N$ and $N''$. 
\end{proposition}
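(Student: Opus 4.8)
The plan is to realize the composition by \emph{gluing} $\pi$ and $\pi'$ over their common middle marginal $\mu'$ on the triple product $X \times X' \times X''$, to pass to the self-product of that glued measure, and then to transfer the two almost-everywhere inequalities defining expansiveness of $\pi$ and of $\pi'$ through the marginal identities of Lemma~\ref{lem:gluing_lemma}, combining them by transitivity of $\leq$ before pushing forward to $(X \times X'')^2$. Since, once the gluing is built, the identity $\pi' \bullet \pi \in \mathcal{C}(\mu,\mu'')$ is immediate from the marginal conditions, the whole content of the proposition is the expansiveness estimate.

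First I would form $\Pi := \pi' \boxtimes \pi$, the gluing on $X \times X' \times X''$ given by Lemma~\ref{lem:gluing_lemma}(2), so that $(p_X \times p_{X'})_\# \Pi = \pi$, $(p_{X'} \times p_{X''})_\# \Pi = \pi'$, and $(p_X \times p_{X''})_\# \Pi = \pi' \bullet \pi$ by definition of the composition. Next I would set $\rho := \Pi \otimes \Pi$ on $(X \times X' \times X'')^2$, whose generic point is a pair $\big((x,x',x''),(y,y',y'')\big)$, and consider the coordinate-forgetting maps
\[
q_1 : \big((x,x',x''),(y,y',y'')\big) \mapsto \big((x,x'),(y,y')\big), \qquad
q_2 : \big((x,x',x''),(y,y',y'')\big) \mapsto \big((x',x''),(y',y'')\big).
\]
Each $q_i$ is a product of two coordinate projections and pushforward commutes with products, so the marginal identities for $\Pi$ give $(q_1)_\# \rho = \pi \otimes \pi$ and $(q_2)_\# \rho = \pi' \otimes \pi'$. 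Expansiveness of $\pi$ says that the measurable set $B := \{\omega(x,y) > \omega'(x',y')\} \subseteq (X \times X')^2$ has $\pi \otimes \pi$-measure $0$, and expansiveness of $\pi'$ says $B' := \{\omega'(x',y') > \omega''(x'',y'')\} \subseteq (X' \times X'')^2$ has $\pi' \otimes \pi'$-measure $0$; hence $q_1^{-1}(B)$, $q_2^{-1}(B')$, and their union are $\rho$-null. On the complement of that union the inequalities $\omega(x,y) \leq \omega'(x',y')$ and $\omega'(x',y') \leq \omega''(x'',y'')$ hold simultaneously, so $\omega(x,y) \leq \omega''(x'',y'')$ there; thus $\{\omega(x,y) \leq \omega''(x'',y'')\} \subseteq (X \times X' \times X'')^2$ has full $\rho$-measure. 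Finally I would push forward along $f := (p_X \times p_{X''}) \times (p_X \times p_{X''})$, for which $f_\# \rho = (\pi' \bullet \pi) \otimes (\pi' \bullet \pi)$: because the inequality $\omega(x,y) \leq \omega''(x'',y'')$ does not involve the forgotten $X'$-coordinates, $f^{-1}$ of the corresponding subset of $(X \times X'')^2$ is exactly the full-measure set just obtained, so $(\pi' \bullet \pi) \otimes (\pi' \bullet \pi)$ assigns it measure $1$, which is precisely the expansiveness of $\pi' \bullet \pi$.

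I expect the argument to be, at bottom, measure-theoretic bookkeeping rather than anything deep; the steps requiring actual care are the construction of the triple gluing with its three marginal identities (all packaged in Lemma~\ref{lem:gluing_lemma}) and keeping straight which block of variables each of $\omega$, $\omega'$, $\omega''$ depends on, so that $B$ and $B'$ really do pull back along $q_1$ and $q_2$; transitivity of $\leq$ then closes the argument. The compactness hypothesis on $N$, $N'$, $N''$ is assumed throughout this part of the paper (see the discussion preceding the proposition) to ensure that $\bullet$ is a well-defined operation on expansive couplings in the first place, and is not otherwise invoked in this step.
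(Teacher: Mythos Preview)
Your argument is correct, and it takes a genuinely different route from the paper's. The paper proves the proposition via the support-lifting Lemma~\ref{lem:supp1}: given $(x,x''),(y,y'')\in\mathrm{supp}(\pi'\bullet\pi)$, Part~3 of that lemma produces $x',y'\in X'$ with $(x,x',x''),(y,y',y'')\in\mathrm{supp}(\pi'\boxtimes\pi)$, and Part~1 then places $(x,x'),(y,y')$ in $\mathrm{supp}(\pi)$ and $(x',x''),(y',y'')$ in $\mathrm{supp}(\pi')$, after which the two expansiveness inequalities are chained pointwise. Your approach bypasses supports entirely: you work on $(X\times X'\times X'')^2$ with the product measure $\rho=(\pi'\boxtimes\pi)^{\otimes 2}$, pull back the null ``bad'' sets of $\pi\otimes\pi$ and $\pi'\otimes\pi'$ along the marginal projections $q_1,q_2$, and then push the resulting full-measure set forward to $(X\times X'')^2$. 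What this buys you is that the almost-everywhere expansiveness condition is handled directly, with no appeal to Lemma~\ref{lem:supp1} (and indeed compactness is never used in your step, as you observe). What the paper's route buys is a cleaner pointwise picture on supports, at the cost of needing the compactness-dependent lifting statements in Lemma~\ref{lem:supp1}.
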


The proof will use the following lemma.

\begin{lemma}\label{lem:supp1}
    Let $X,X',X''$ be compact Polish spaces endowed with Borel probability measures $\mu,\mu',\mu''$ and let $\pi\in\mathcal{C}(\mu,\mu')$ and $\pi' \in \mathcal{C}(\mu',\mu'')$. 
    \begin{enumerate}
    \item If $(x,x')\in\mathrm{supp}(\pi)$, then $x\in\mathrm{supp}(\mu)$ and $x'\in\mathrm{supp}(\mu')$.
    \item If $x\in\mathrm{supp}(\mu)$, then there exists $x'\in X'$ such that $(x,x')\in\mathrm{supp}(\pi)$.
    \item If $(x,x'') \in \mathrm{supp}(\pi' \bullet \pi)$ then there exists $x' \in X'$ such that $(x,x',x'') \in \mathrm{supp}(\pi' \boxtimes \pi)$.
    \end{enumerate}
\end{lemma}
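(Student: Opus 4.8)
The three statements are all facts about supports of couplings on compact Polish spaces; I would prove them in order, each being a short topological argument.

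For part (1): suppose $(x,x') \in \mathrm{supp}(\pi)$ but, for contradiction, $x \notin \mathrm{supp}(\mu)$. Then there is an open neighborhood $U$ of $x$ with $\mu(U) = 0$. Since $p_X$ is the coordinate projection and $(p_X)_\#\pi = \mu$, the open set $U \times X'$ is a neighborhood of $(x,x')$ with $\pi(U \times X') = \mu(U) = 0$, contradicting $(x,x') \in \mathrm{supp}(\pi)$. The argument that $x' \in \mathrm{supp}(\mu')$ is symmetric, using $(p_{X'})_\#\pi = \mu'$.

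For part (2): fix $x \in \mathrm{supp}(\mu)$ and consider the fiber $\{x\} \times X'$. Intuitively the disintegration kernel $\pi_x$ is a probability measure on $X'$ "living over" $x$, and any point in its support should pair with $x$ inside $\mathrm{supp}(\pi)$; but since $\pi_x$ is only defined for $\mu$-a.e.\ $x$, I would instead argue directly. Assume for contradiction that for every $x' \in X'$ we have $(x,x') \notin \mathrm{supp}(\pi)$; then each $x'$ has a basic open neighborhood $U_{x'} \times V_{x'}$ (with $x \in U_{x'}$, $x' \in V_{x'}$) of $\pi$-measure zero. The sets $\{V_{x'}\}_{x' \in X'}$ form an open cover of $X'$, which is compact, so finitely many $V_{x'_1},\dots,V_{x'_k}$ cover $X'$. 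Set $U := \bigcap_{i=1}^k U_{x'_i}$, an open neighborhood of $x$. Then $U \times X' \subseteq \bigcup_{i=1}^k (U_{x'_i} \times V_{x'_i})$, so $\pi(U \times X') = 0$, hence $\mu(U) = (p_X)_\#\pi(U\times X') = 0$, contradicting $x \in \mathrm{supp}(\mu)$. (Compactness of $X'$ is exactly what makes the finite subcover step work, which is why the hypothesis is needed.)

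For part (3): apply part (2) twice, combined with \lref{lem:gluing_lemma}. Suppose $(x,x'') \in \mathrm{supp}(\pi' \bullet \pi)$, where $\pi' \bullet \pi = (p_X \times p_{X''})_\#(\pi' \boxtimes \pi)$. Consider the compact space $X \times X' \times X''$ equipped with the measure $\pi' \boxtimes \pi$, and the (surjective, continuous) projection $q := p_X \times p_{X''}: X \times X' \times X'' \to X \times X''$, which pushes $\pi' \boxtimes \pi$ forward to $\pi' \bullet \pi$. The general principle here is: if $z$ lies in the support of a pushforward $q_\#\rho$ with $q$ continuous and the domain compact, then $q^{-1}(z)$ meets $\mathrm{supp}(\rho)$ --- this is proved by the same finite-subcover argument as in part (2), covering the compact fiber $q^{-1}(z)$ by neighborhoods of measure zero and using that $q$ is a closed map. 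Instantiating this with $\rho = \pi' \boxtimes \pi$ and $z = (x,x'')$ yields some $x' \in X'$ with $(x,x',x'') \in \mathrm{supp}(\pi' \boxtimes \pi)$, as required. The main obstacle throughout is nothing deep but requires care: handling the fact that disintegration kernels are only defined almost everywhere forces one to argue via open covers and compactness rather than by naively "picking a point in the fiber of the disintegration," and one must be careful that the projection maps involved are closed (which holds since all spaces are compact) so that images of supports behave well.
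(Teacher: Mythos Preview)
Your proof is correct and largely parallels the paper's. Part~1 is identical up to phrasing (direct vs.\ contrapositive), and your Part~2 compactness argument is precisely what underlies the reference \cite[Lemma~2.2]{dghlp-focm} that the paper cites without detail.

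For Part~3 there is a small but genuine difference worth noting. You extract and prove a general principle---if $q$ is a continuous map from a compact space and $z \in \mathrm{supp}(q_\#\rho)$, then $q^{-1}(z)$ meets $\mathrm{supp}(\rho)$---and apply it to $q = p_X \times p_{X''}$. The paper instead observes that the coordinate-rearrangement homeomorphism $X \times X' \times X'' \to (X \times X'') \times X'$ pushes $\pi' \boxtimes \pi$ to a \emph{coupling} of $\pi' \bullet \pi$ and $\mu'$, so Part~2 applies verbatim with $(X \times X'', \pi' \bullet \pi)$ in the role of the first factor. Your route is more self-contained and isolates the topological fact cleanly; the paper's route is slightly slicker in that it avoids redoing the cover argument by recognizing that Part~3 is literally an instance of Part~2 after a change of coordinates. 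Both arguments rely on the same compactness ingredient, so neither buys more generality than the other here.
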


\begin{proof}
    
       We first prove Part 1. Suppose $(x,x')\in\mathrm{supp}(\pi)$, so that every open neighborhood of $(x,x')$ has positive measure with respect to $\pi$. Then for any open neighborhood $U$ of $x\in X$, we have that $U\times X'$ is open in $X\times X'$, so that $\mu(U)=\pi(U\times X')>0$. It follows that $x\in\mathrm{supp}(\mu)$. Similarly, $x'\in\mathrm{supp}(\mu')$. 
    
    Part 2 follows from \cite[Lemma 2.2]{dghlp-focm} and implies Part 3. Indeed, consider the map $X \times X' \times X'' \to X \times X'' \times X'$ given by rearranging coordinates. This map pushes the measure $\pi' \boxtimes \pi$ forward to a measure $\xi \in \mathcal{C}(\pi' \bullet \pi, \mu')$. By Part 2, there is $x' \in X'$ such that $(x,x'',x') \in \mathrm{supp}(\xi)$. The inverse of the coordinate rearrangement map pushes $\xi$ back to $\pi' \boxtimes \pi$ and it follows that $(x,x',x'') \in \mathrm{supp}(\pi' \boxtimes \pi)$.\end{proof}

\begin{proof}[Proof of Proposition \ref{prop:composition_preserves_expansiveness}]
For $(x,x''), (y,y'')$ in the support of $\pi' \bullet \pi$, by Part 3 of Lemma \ref{lem:supp1}, we have that there exists some $x',y'\in N'$ such that $(x,x',x'')$ and $(y,y',y'')$ are in the support of $\pi'\boxtimes\pi$. Then by Part 1 of Lemma \ref{lem:supp1}, we get that $(x,x')$ and $(y,y')$ are in the support of $\pi$ and $(x',x'')$ and $(y',y'')$ are in the support of $\pi'$. As $\pi$ and $\pi'$ are expansive couplings, we get that 
\[ \omega(x,y)\leq \omega'(x',y')\leq \omega''(x'',y'').\]
So $\pi' \bullet \pi$ is an expansive coupling of $N$ and $N''$.
\end{proof}

\begin{example}
We now observe that the restriction to compact measure networks was not only a matter of convenience, but in fact a necessity to get well-defined compositions.

Consider measure networks $N,N',N''$ defined as follows. Let $N = (X,\mu,\omega)$, where $X = \{\frac{1}{n} \mid n \in \mathbb{Z}_{>0}\} \cup \{0\}$, 
\[
\mu(x) = \left\{\begin{array}{cl}
c \cdot \frac{1}{n^2} & \mbox{if $x = \frac{1}{n}$} \\
0 & \mbox{if $x = 0$,}
\end{array}\right. \quad \mbox{and} \quad 
\omega(x,y) = \left\{\begin{array}{cl}
1 & \mbox{if $x = y = 0$} \\
0 & \mbox{otherwise,}
\end{array}\right.
\]
with $c = \frac{6}{\pi^2}$ chosen to normalize $\mu_X$  to be a probability measure. Observe that the set $X$ is Polish when topologized as a subspace of $\mathbb{R}$ and  that $\mu$ is fully supported. Next, we define $N' = (X',\mu',\omega')$ with $X' = \mathbb{Z}_{> 0}$ ($X'$ is a Polish space, as a subspace of $\mathbb{R}$) $\mu'(n) = c \cdot \frac{1}{n^2}$ (a fully supported measure) and $\omega'(m,n) = 1$ for all $m,n \in X'$. Finally, let $N'' = (X'',\mu'',\omega'')$ be defined by setting $X'' = X$, $\mu'' = \mu$, and $\omega''(x,y) = 1 - \omega(x,y)$. 

We now construct couplings to compose. Let $\pi \in \mathcal{C}(\mu,\mu')$ be defined by 
\[
\pi(x,x') = \left\{\begin{array}{cl}
c \cdot \frac{1}{n^2} & \mbox{if $x = \frac{1}{n}$ and $x' = n$} \\
0 & \mbox{otherwise}
\end{array}\right.
\]
and define $\pi' \in \mathcal{C}(\mu',\mu'')$ by $\pi'(x',x'') = \pi(x'',x')$. Then 
\[
\mathrm{supp}(\pi) = \{(1/n,n) \mid n \in \mathbb{Z}_{> 0}\} \quad \mbox{and} \quad \mathrm{supp}(\pi') = \{(n,1/n) \mid n \in \mathbb{Z}_{> 0}\}.
\]
It follows by construction that for all pairs $(\frac{1}{n},n)$ and $(\frac{1}{m},m)$ in the support of $\pi$, 
$
\omega\left(1/n,1/m\right) \leq \omega'(n,m)$,
so that $\pi$ is expansive. Likewise, $\pi'$ is expansive. However, $(0,0) \in \mathrm{supp}(\pi' \bullet \pi)$ and we have 
\[
1 = \omega(0,0) > \omega''(0,0) = 0,
\]
so that the composition $\pi' \bullet \pi$ is not expansive.
\end{example}

The identity morphism for $N=(X,\mu,\omega)$ in the context of expansive couplings will be the \emph{diagonal coupling}  $\mathbbm{1}_N:= (\mathrm{id}_X\times\mathrm{id}_X)_{\#}\mu $.  
This coupling acts as an identity follows from \cite[Proposition 4.6]{perrone2021}; the idea is that the disintegration kernel for $\mathbbm{1}_N$ at $x \in X$ is $(\mathbbm{1}_N)_x(dy) = \delta_{x,y} \cdot \mu(dx)$, with $\delta_{x,y}$ the Dirac indicator function for the condition $x=y$, so it follows that $\pi\bullet\mathbbm{1}_N=\pi=\mathbbm{1}_{N'}\bullet \pi.$
Associativity of composition of couplings is also shown in \cite[Proposition 4.6]{perrone2021}. With these ingredients, we are now able to define a new category structure for measure networks.

\begin{definition}[Category of Measure Networks with Expansive Couplings]
The \emph{category of measure networks with expansive couplings} $\mathcal{N}_\mathrm{c}$ is the category whose objects are \emph{compact} measure networks and whose morphisms are expansive couplings. Morphisms are composed by the gluing operation and the identity morphism of $N$ is the diagonal coupling $\mathbbm{1}_N$. We sometimes denote an expansive coupling between $N$ and $N'$ as $N \xrightarrow[]{\pi} N'$.
\end{definition}

We will see below (\autoref{thm:p_clique} and Corollary~\ref{cor:clique_not_functorial}) that this categorical structure allows for a richer variety of graphification functors than the expansive maps category. Moreover, this category allows us to capture the notion of strong isomorphism between measure networks in categorical formalism; see Section \ref{sec:isomorphisms} and Proposition \ref{prop:isomorphism_equivalences}.

\begin{remark}\label{rmk:lawvere_perrone}
Similar constructions of categorical structures on probability spaces are explored in  the lecture notes of Lawvere \citep{lawvere1962category} and more recently by Perrone \citep{perrone2021}. The former describes a category where the objects are arbitrary measurable spaces and the morphisms are Markov kernels. The author also adds a deterministic map which defines a subcategory that utilizes Dirac-delta probability measures. The latter defines a weighted category where the objects are probability measures on a Polish space $X$ paired with a measurable (pseudo-quasi-)metric and morphisms are couplings with a weight given by a cost function that mimics the $p$-distortion functional we have in Equation~\eqref{eqn:GW_loss}.
\end{remark}

We end this subsection by pinning down the precise categorical relationship between measure-preserving expansive maps and expansive couplings. Let $\mathcal{N}_\mathrm{mp}$ denote the category of compact measure networks whose morphisms are expansive measure-preserving maps. Consider the map $\mathsf{P}:\mathcal{N}_\mathrm{mp} \to \mathcal{N}_\mathrm{c}$ which is the identity on objects and which takes a measure-preserving expansive map $N \xrightarrow{\phi} N'$ to the \emph{induced coupling} $(\mathrm{id}_X \times \phi)_\# \mu$.

\begin{proposition}\label{prop:embedding_graphs}
    The map $\mathsf{P}:\mathcal{N}_\mathrm{mp} \to \mathcal{N}_\mathrm{c}$ is a faithful functor.
\end{proposition}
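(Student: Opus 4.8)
The plan is to verify, in this order, that $\mathsf{P}$ is well-defined on morphisms, that it preserves identities and composition, and finally that it is injective on hom-sets. The single observation powering all three points is that for a measurable measure-preserving map $\phi\colon X\to X'$, the induced coupling $\pi_\phi := (\mathrm{id}_X\times\phi)_\#\mu$ is concentrated on the graph $\mathrm{graph}(\phi) = \{(x,\phi(x)) : x\in X\}$. Indeed $\mathrm{graph}(\phi)$ is Borel, being the preimage of the closed diagonal of $X'\times X'$ under the measurable map $(x,x')\mapsto(\phi(x),x')$, and $\pi_\phi(\mathrm{graph}(\phi)) = \mu(\{x : \phi(x)=\phi(x)\}) = 1$. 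Equivalently, $(\pi_\phi)_x = \delta_{\phi(x)}$ serves as a disintegration kernel for $\pi_\phi$ with respect to $X$ for $\mu$-almost every $x$. I would set up this lemma first and then invoke it repeatedly.

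\textbf{Well-definedness of $\mathsf{P}$ on morphisms.} First, $\pi_\phi\in\mathcal{C}(\mu,\mu')$: its $X$-marginal is $\mu$ trivially, and its $X'$-marginal is $\phi_\#\mu = \mu'$ since $\phi$ is measure-preserving. Second, $\pi_\phi$ is expansive: because $\pi_\phi$ lives on $\mathrm{graph}(\phi)$, for $\pi_\phi\otimes\pi_\phi$-almost every pair $(x,x'),(y,y')$ one has $x'=\phi(x)$ and $y'=\phi(y)$, so the expansive-map hypothesis $\omega(x,y)\le\omega'(\phi(x),\phi(y))$ gives precisely $\omega(x,y)\le\omega'(x',y')$. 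Hence $\pi_\phi$ is a morphism $N\to N'$ of $\mathcal{N}_\mathrm{c}$. Note compactness plays no role in this step; it enters only because $\mathcal{N}_\mathrm{c}$ itself requires it (Proposition~\ref{prop:composition_preserves_expansiveness}), and $\mathsf{P}$ lands there by hypothesis on objects.

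\textbf{Functoriality.} Identities are immediate: $\mathrm{id}_X\colon N\to N$ is measure-preserving and expansive, and $\mathsf{P}(\mathrm{id}_X) = (\mathrm{id}_X\times\mathrm{id}_X)_\#\mu = \mathbbm{1}_N$, the identity of $N$ in $\mathcal{N}_\mathrm{c}$. For composition, take $N\xrightarrow{\phi}N'\xrightarrow{\phi'}N''$ in $\mathcal{N}_\mathrm{mp}$, write $\pi=\pi_\phi$ and $\pi'=\pi_{\phi'}$, and consider $\rho := (\mathrm{id}_X\times\phi\times(\phi'\circ\phi))_\#\mu$ on $X\times X'\times X''$. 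Projecting onto the first two factors yields $\pi$; projecting onto the last two yields $((\mathrm{id}_{X'}\times\phi')\circ\phi)_\#\mu = (\mathrm{id}_{X'}\times\phi')_\#(\phi_\#\mu) = (\mathrm{id}_{X'}\times\phi')_\#\mu' = \pi'$, again using that $\phi$ is measure-preserving. By the uniqueness clause of the Gluing Lemma~\ref{lem:gluing_lemma}(2), $\rho = \pi'\boxtimes\pi$, and therefore
\[
\mathsf{P}(\phi')\bullet\mathsf{P}(\phi) = (p_X\times p_{X''})_\#\rho = (\mathrm{id}_X\times(\phi'\circ\phi))_\#\mu = \mathsf{P}(\phi'\circ\phi).
\]

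\textbf{Faithfulness, and the main obstacle.} Suppose $\phi,\tilde\phi\colon N\to N'$ are measure-preserving expansive maps with $\pi_\phi = \pi_{\tilde\phi}$. Since $\pi_\phi$ is concentrated on $\mathrm{graph}(\phi)$, so is $\pi_{\tilde\phi}$, which forces $\mu(\{x : \tilde\phi(x)=\phi(x)\}) = \pi_{\tilde\phi}(\mathrm{graph}(\phi)) = 1$; hence $\phi=\tilde\phi$ $\mu$-almost everywhere. I expect the only real subtlety to be exactly this: the conclusion is $\mu$-a.e. equality, not pointwise equality, so faithfulness must be read relative to the convention that morphisms of $\mathcal{N}_\mathrm{mp}$ are identified modulo $\mu$-null changes — and this convention is in fact forced, since two maps differing on a $\mu$-null set induce the same coupling, so no functor out of the literal-map category to $\mathcal{N}_\mathrm{c}$ could ever be faithful. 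Under that reading $\phi=\tilde\phi$ in $\mathcal{N}_\mathrm{mp}$, so $\mathsf{P}$ is faithful. Everything else is routine measure-theoretic bookkeeping: graph measurability of $\phi$, and repeated application of the pushforward/change-of-variables identities.
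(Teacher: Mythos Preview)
Your proof is correct and follows essentially the same route as the paper: graph-supported couplings for well-definedness, a gluing argument for composition, and a support/concentration argument for faithfulness. Two points of comparison are worth noting.

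For composition, you construct the triple pushforward $\rho=(\mathrm{id}_X\times\phi\times(\phi'\circ\phi))_\#\mu$ and invoke the uniqueness clause of Lemma~\ref{lem:gluing_lemma}(2) to identify it with $\pi'\boxtimes\pi$. The paper instead writes out the disintegration kernels $\delta_{\phi(x)}(dx')$ and $\delta_{\phi'(x')}(dx'')$ and then verifies the two marginal conditions by explicit integration before invoking the same uniqueness. Your version is slightly more streamlined, but the two arguments are equivalent in substance and both rest on the uniqueness assertion in the lemma as stated.

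For faithfulness, the paper simply asserts $\mathrm{supp}(\pi_\phi)=\{(x,\phi(x)):x\in X\}$ and concludes $\phi=\psi$ pointwise from $\pi_\phi=\pi_\psi$. You instead argue via concentration on the graph and obtain $\phi=\tilde\phi$ $\mu$-a.e., then observe that faithfulness must therefore be read modulo $\mu$-null modifications. Your treatment is the more careful one: without continuity of $\phi$, the support of $\pi_\phi$ need not equal the graph (it is only the smallest closed full-measure set), so the paper's pointwise conclusion is not fully justified for general measurable maps, whereas your a.e.\ conclusion is. This is a genuine refinement rather than a different method.
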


\begin{proof}
We first observe that $\mathsf{P}$ is well defined in the sense that if $N \xrightarrow[]{\phi} N'$ is an expansive measure-preserving map, then $\pi_\phi:= (\mathrm{id}_X \times \phi)_\# \mu$ is an expansive coupling. Indeed, the support of $\pi_\phi$ is $\{(x,\phi(x)) \mid x \in X\}$, so expansiveness of $\pi_\phi$ follows immediately from expansiveness of $\phi$.

We need to show that $\mathsf{P}$ respects compositions. Let $N \xrightarrow[]{\phi}N'$ and $N' \xrightarrow[]{\phi'} N''$ be expansive measure-preserving maps and let $\pi_\phi = \mathsf{P}(\phi)$ and $\pi_{\phi'} = \mathsf{P}(\phi')$. We first observe that the disintegration kernel for $\pi_\phi$ at $x \in X$ is the Dirac measure $\delta_{\phi(x)}(dx')$, so that $\pi_\phi(dx \times dx') = \delta_{\phi(x)}(dx')\mu(dx)$. Likewise, $\pi_{\phi'}(dx' \times dx'') = \delta_{\phi'(x')}(dx'')\mu'(dx')$. 

We claim that the gluing of $\pi'$ and $\pi$ is given by the formula
\[
\pi' \boxtimes \pi (dx \times dx' \times dx'') = \delta_{\phi(x)}(dx') \delta_{\phi'(x')}(dx'') \mu(dx)
\]
(note that this is not our usual formula because we disintegrated in a different order for the sake of explicitness). This is verified by checking the $X \times X'$ and $X' \times X''$ marginals and appealing to the uniqueness part of Lemma \ref{lem:gluing_lemma}: respectively, we have,
\begin{align*}
\int_{X''} \delta_{\phi(x)}(dx') \delta_{\phi'(x')}(dx'') \mu(dx) &= \delta_{\phi(x)}(dx')\mu(dx) \int_{X''} \delta_{\phi'(x')}(dx'') \\
&= \delta_{\phi(x)}(dx')\mu(dx) =  \pi_\phi(dx \times dx')
\end{align*}
and
\begin{align*}
\int_X \delta_{\phi(x)}(dx') \delta_{\phi'(x')}(dx'') \mu(dx) &= \delta_{\phi'(x')}(dx'') \int_X \delta_{\phi(x)}(dx') \mu(dx) \\
&= \delta_{\phi'(x')}(dx'') \int_{\{x \in X \mid \phi(x) = x'\}} \mu(dx) \\ 
&= \delta_{\phi'(x')}(dx'') \mu'(dx') = \pi_{\phi'}(dx' \times dx'').
\end{align*}

We obtain $\pi_{\phi'} \bullet \pi_\phi$ by taking the $X \times X''$ marginal of $\pi' \boxtimes \pi$, which yields
\begin{align*}
    \int_{X'} \delta_{\phi(x)}(dx') \delta_{\phi'(x')}(dx'') \mu(dx) &= \mu(dx) \int_{X'} \delta_{\phi'(x')}(dx'') \delta_{\phi(x)}(dx')  \\
    &= \mu(dx) \int_{\{x' \in X' \mid \phi'(x') = x''\}} \delta_{\phi(x)}(dx') \\
    &= \delta_{\phi' \circ \phi (x)}(dx'') \mu(dx) = \pi_{\phi' \circ \phi}(dx \times dx'').
\end{align*}
Therefore, $\mathsf{P}$ respects compositions.

Faithfulness of $\mathsf{P}$ follows by checking supports. Suppose that $\phi$ and $\psi$ are measure-preserving expansive maps from $N$ to $N'$. Then 
\[
\mathrm{supp}(\pi_\phi) = \{(x,\phi(x)) \mid x \in X'\} \quad \mbox{and} \quad \mathrm{supp}(\pi_\psi) = \{(x,\psi(x)) \mid x \in X'\},
\]
so that $\pi_\phi = \pi_\psi$ if and only if $\phi = \psi$.
\end{proof}

\subsection{Categories of Hypernetworks}

In the setting of combinatorial hypergraphs, an obvious generalization of a graph morphism is to consider a morphism from $(X,Y)$ to $(X',Y')$ to be a pair of maps $(\phi,\psi)$ with $\phi:X \to X'$ and $\psi:Y \to Y'$ such that $x \in y$ implies $\phi(x) \in \psi(y)$---this is the D\"{o}rfler-Waller hypergraph category~\cite{DorflerWaller1980}. This directly generalizes to the measure hypernetwork setting as an \emph{expansive map}. Similar to the measure network setting, we can involve measures in the morphism structure through \emph{expansive couplings}, generalizing the relation-based Schmidt-Str{\"o}hlein categorical structure on hypergraphs~\cite[Section 7.2]{schmidt2012relations}. 

\begin{definition}
Let $H,H' \in \mathcal{H}$ be measure hypernetworks. A pair of functions $(\phi,\psi)$, $\phi:X \to X'$ and $\psi:Y \to Y'$, is called an \emph{expansive map} if $\omega(x,y) \leq \omega'(\phi(x),\psi(y))$ for all $(x,y) \in X \times Y$. An \emph{expansive coupling} is a pair $(\pi,\xi)$, where $\pi \in \mathcal{C}(\mu,\mu')$, $\xi \in \mathcal{C}(\nu,\nu')$ and for $\pi \otimes \xi$-almost every pair $(x,x') \in X \times X'$ and $(y,y') \in Y \times Y'$, $\omega(x,y) \leq \omega'(x',y')$.

We use $\mathcal{H}_\mathrm{m}$ to denote the category whose objects are measure hypernetworks and whose morphisms are expansive maps. We use $\mathcal{H}_\mathrm{c}$ to denote the category whose objects are \emph{compact measure hypernetworks} (measure hypernetworks whose underlying topological spaces are both compact) and whose morphisms are expansive couplings. Expansive couplings are composed as $(\pi',\xi') \bullet (\pi,\xi) := (\pi' \bullet \pi, \xi' \bullet \xi)$ and the identity morphism on $H$ is $\mathbbm{1}_H = ((\mathrm{id}_X \times \mathrm{id}_X)_\# \mu, (\mathrm{id}_Y \times \mathrm{id}_Y)_\# \nu )$.
\end{definition}

The results of the previous subsection extend straightforwardly to show that the category $\mathcal{H}_\mathrm{c}$ is well-defined.

The next few subsections address the main problem above: are the transformations described in \autoref{sec:transformations} Lipschitz (with respect to $d_\mathcal{N}$ and $d_\mathcal{H}$) and functorial (with respect to the category structures defined above).

\subsection{Dualization Functor}

Generalizing the discussion in~\autoref{sec:transformations}, a measure hypernetwork $H = (X,\mu,Y,\nu,\omega)$ has a \emph{dual hypernetwork} $H_\ast = (Y,\nu,X,\mu,\omega_\ast)$, where
$
\omega_\ast(y,x) := \omega(x,y).
$
As a first application of the categorical perspective, we show that the dualization map is a functorial isometry.

\begin{proposition}\label{prop:dualization}
The dualization map $H \mapsto H_\ast$ is a covariant endofunctor for both categories $\mathcal{H}_\mathrm{m}$ and $\mathcal{H}_\mathrm{c}$ and is an isometry of the pseudometric space $(\Hcal,d_{\Hcal,p})$. 
\end{proposition}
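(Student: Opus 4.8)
The plan is to handle the three assertions in turn, all of which come down to the observation that dualization merely interchanges the two ``coordinates'' of a hypernetwork, and that every ingredient in Definition~\ref{def:coot-distance} and in the two category definitions is symmetric under that interchange. A convenient running remark is that dualization is an involution, $H_{\ast\ast}=H$ (and $(\phi,\psi)_{\ast\ast}=(\phi,\psi)$, $(\pi,\xi)_{\ast\ast}=(\pi,\xi)$), so once functoriality is established it is automatically a (strict) isomorphism of categories; the proposition only asks for ``endofunctor,'' which is immediate since $H_\ast\in\mathcal{H}$ and $H_\ast$ is compact whenever $H$ is, the underlying Polish spaces being unchanged.

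\textbf{Functoriality on $\mathcal{H}_\mathrm{m}$.} Given an expansive map $(\phi,\psi)\colon H\to H'$ with $\omega(x,y)\le\omega'(\phi(x),\psi(y))$ for all $(x,y)$, I would declare $(\phi,\psi)_\ast:=(\psi,\phi)$, now read as a pair of maps $Y\to Y'$ (on nodes of $H_\ast$) and $X\to X'$ (on hyperedges of $H_\ast$). Because $\omega_\ast(y,x)=\omega(x,y)$ and $\omega'_\ast(y',x')=\omega'(x',y')$, the expansiveness inequality required of $(\psi,\phi)$ relative to $H_\ast,H'_\ast$ is verbatim the one we started with, so $(\psi,\phi)$ is a morphism $H_\ast\to H'_\ast$. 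The identity $(\mathrm{id}_X,\mathrm{id}_Y)$ goes to $(\mathrm{id}_Y,\mathrm{id}_X)=\mathrm{id}_{H_\ast}$, and since composition of expansive maps is coordinatewise, $((\phi',\psi')\circ(\phi,\psi))_\ast=(\psi'\circ\psi,\,\phi'\circ\phi)=(\phi',\psi')_\ast\circ(\phi,\psi)_\ast$, giving covariance.

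\textbf{Functoriality on $\mathcal{H}_\mathrm{c}$.} For an expansive coupling $(\pi,\xi)\colon H\to H'$ I would set $(\pi,\xi)_\ast:=(\xi,\pi)$, observing that $\xi\in\mathcal{C}(\nu,\nu')$ couples the node measures of $H_\ast,H'_\ast$ while $\pi\in\mathcal{C}(\mu,\mu')$ couples their hyperedge measures. The defining condition ``$\omega(x,y)\le\omega'(x',y')$ for $\pi\otimes\xi$-a.e.\ $((x,x'),(y,y'))$'' is equivalent to ``$\omega_\ast(y,x)\le\omega'_\ast(y',x')$ for $\xi\otimes\pi$-a.e.\ $((y,y'),(x,x'))$'' because the two product measures correspond under the measurable bijection $(X\times X')\times(Y\times Y')\to(Y\times Y')\times(X\times X')$ that reorders coordinates. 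Identities are preserved because $\mathbbm{1}_H$ is itself coordinatewise, and composition is preserved because $(\pi',\xi')\bullet(\pi,\xi)=(\pi'\bullet\pi,\,\xi'\bullet\xi)$ by definition; the unit and associativity laws need not be rechecked since $\mathcal{H}_\mathrm{c}$ is already known to be a well-defined category.

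\textbf{Isometry, and the main obstacle.} Unwinding the definitions, for any $\pi\in\mathcal{C}(\mu,\mu')$ and $\xi\in\mathcal{C}(\nu,\nu')$ the integrand $|\omega_\ast(y,x)-\omega'_\ast(y',x')|^p=|\omega(x,y)-\omega'(x',y')|^p$ is bounded, so Fubini's theorem gives $\mathrm{dis}^{\mathcal{H}}_{H_\ast,H'_\ast,p}(\xi,\pi)=\mathrm{dis}^{\mathcal{H}}_{H,H',p}(\pi,\xi)$ for $p<\infty$ (and for $p=\infty$ the same coordinate-swap identification equates the two essential suprema). Since the double infimum in \eqref{eqn:distance} is insensitive to the order of the two infima, applying $\inf_\xi\inf_\pi$ to one side and $\inf_\pi\inf_\xi$ to the other yields $d_{\mathcal{H},p}(H_\ast,H'_\ast)=d_{\mathcal{H},p}(H,H')$. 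None of these steps is technically demanding; the only point warranting a little care is transferring the almost-everywhere inequality through the coordinate-permutation isomorphism of product measure spaces in the $\mathcal{H}_\mathrm{c}$ case (and, for $p=\infty$, confirming the essential supremum is unaffected by this relabeling). Everything else is bookkeeping in the symmetry of the definitions.
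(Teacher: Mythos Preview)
Your proposal is correct and follows essentially the same approach as the paper's proof: both define the dual of a morphism by swapping the two components, verify the expansiveness condition transfers under this swap, check compositions are preserved coordinatewise, and then establish the isometry via the identity $\mathrm{dis}^{\mathcal{H}}_{H_\ast,H'_\ast,p}(\xi,\pi)=\mathrm{dis}^{\mathcal{H}}_{H,H',p}(\pi,\xi)$. Your write-up is somewhat more explicit (e.g., invoking Fubini and handling $p=\infty$ separately), but the underlying argument is the same.
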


\begin{proof}
Given measure hypernetworks $H,H' \in \mathcal{H}$ and an expansive map $(\phi,\psi)$ with $\phi:X \to X'$ and $\psi:Y \to Y'$, define $\phi_\ast = \psi$ and $\psi_\ast = \phi$, we have
\[
\omega_\ast(y,x) = \omega(x,y) \leq \omega'(\phi(x),\psi(y)) = \omega_\ast(\psi_\ast(y),\phi_\ast(x)),
\]
for every $(y,x)$, so that $(\psi_\ast, \phi_\ast)$ is an expansive map for $H_\ast$ and $H'_\ast$. The dualization map respects composition: given also $H'' \in \mathcal{H}$ and $\phi':X' \to X''$, $\psi':Y' \to Y''$, $(\phi' \circ \phi)_\ast = \psi' \circ \psi = \phi'_\ast \circ \phi_\ast$ and $(\psi' \circ \psi)_\ast = \phi' \circ \phi = \psi'_\ast \circ \psi_\ast$. The arguments in the category of expansive couplings are similar; given an expansive coupling $(\pi,\xi)$ of $H$ and $H'$, $(\xi,\pi)$ is an expansive coupling of $H_\ast$ and $H_\ast'$, and compositions are respected. 

The isometry claim follows from the observation that, for any couplings $\pi \in \mathcal{C}(\mu,\mu')$ and $\xi \in \mathcal{C}(\nu,\nu')$ for $H$ and $H'$, we have
\[
\mathrm{dis}_{H,H',p}(\pi,\xi)  = \mathrm{dis}_{H_\ast,H_\ast',p}(\xi,\pi).
\]
\end{proof}

\subsection{The Bipartite Incidence Map} 

For the rest of this section, we consider graphifications $\mathsf{F}: \mathcal{H} \to \mathcal{N}$ inspired by the transformations described in~\autoref{sec:transformations}.

As a first example of a graphification, we define a map $\mathsf{B}: \Hcal \to \mathcal{N}$ by $\mathsf{B}(X,\mu,Y,\nu,\omega) = (Z, \mu_\mathsf{B}, \omega_\mathsf{B})$, where $Z = X \cup Y$ is endowed with the disjoint union topology,
\[
\omega_\mathsf{B}(w,z) = \left\{
\begin{array}{cl}
\omega(w,z) & \mbox{if $w \in X$ and $z \in Y$} \\
\omega_\ast(w,z) & \mbox{if $z \in X$ and $w \in Y$} \\
0 & \mbox{otherwise.}
\end{array}\right.
\]
and $\mu_\mathsf{B}$ is defined as
\begin{equation}\label{eqn:incidence_measure}
\mu_\mathsf{B} = \frac{1}{2}((i_X)_\# \mu + (i_Y)_\# \nu),
\end{equation}
where $i_X:X \to Z$ and $i_Y:Y \to Z$ are inclusions. 

The map $\mathsf{B}$ generalizes the transformation from a hypergraph to its bipartite incidence (or star expansion) graph (\autoref{sec:transformations}). As such, we refer to $\mathsf{B}:\mathcal{H} \to \mathcal{N}$ as the \emph{bipartite incidence map}. In the statement below, we abuse notation and write $\mathsf{B}:\mathcal{H}_\mathrm{m} \to \mathcal{N}_\mathrm{m}$ and $\mathsf{B}:\mathcal{H}_\mathrm{c} \to \mathcal{N}_\mathrm{c}$ for functors defined on objects by $\mathsf{B}$ (or the restiction of $\mathsf{B}$ to the set of compact hypernetworks in the expansive coupling category).

\begin{proposition}
\label{prop:incidence_map}
The bipartite incidence map induces functors $\mathsf{B}:\mathcal{H}_\mathrm{m} \to \mathcal{N}_\mathrm{m}$ and $\mathsf{B}:\mathcal{H}_\mathrm{c} \to \mathcal{N}_\mathrm{c}$ and is $\frac{1}{2^{1/p}}$-Lipschitz with respect to $d_{\mathcal{H},p}$ and $d_{\mathcal{N},p}$ for all $p \in [1,\infty)$ and $1$-Lipschitz for $p = \infty$.
\end{proposition}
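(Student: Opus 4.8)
The plan is to establish the two claims of Proposition~\ref{prop:incidence_map} separately: functoriality on $\mathcal{H}_\mathrm{m}$ and $\mathcal{H}_\mathrm{c}$, and the Lipschitz bound. The functoriality arguments should be routine. On objects, $\mathsf{B}$ sends a compact hypernetwork to a compact network since $Z = X \cup Y$ with the disjoint union topology is compact when $X$ and $Y$ are; and $\mu_\mathsf{B}$ is fully supported on $Z$ because $\mu$ and $\nu$ are fully supported on $X$ and $Y$. On morphisms in $\mathcal{H}_\mathrm{m}$: given an expansive map $(\phi,\psi):H \to H'$, define $\phi_\mathsf{B}:Z \to Z'$ to agree with $\phi$ on $X$ and with $\psi$ on $Y$. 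One checks $\omega_\mathsf{B}(w,z) \le \omega'_\mathsf{B}(\phi_\mathsf{B}(w),\phi_\mathsf{B}(z))$ by casework on which of $X,Y$ the points $w,z$ lie in: the two nonzero cases reduce to $\omega \le \omega'\circ(\phi\times\psi)$ (and its dual via $\omega_\ast$), and the ``otherwise'' case is the trivial inequality $0 \le 0$ because $\phi_\mathsf{B}$ preserves the $X$/$Y$ partition. Composition and identities are clearly respected since $\phi_\mathsf{B}$ is built coordinatewise. For $\mathcal{H}_\mathrm{c}$: given an expansive coupling $(\pi,\xi)$ of $H,H'$, set $\pi_\mathsf{B} := \tfrac12\big((i_X\times i_{X'})_\#\pi + (i_Y\times i_{Y'})_\#\xi\big)$; verify it is a coupling of $\mu_\mathsf{B}$ and $\mu'_\mathsf{B}$ by pushing forward through the coordinate projections and using~\eqref{eqn:incidence_measure}, and verify it is expansive by noting $\mathrm{supp}(\pi_\mathsf{B}) \subseteq (\mathrm{supp}(\pi)) \sqcup (\mathrm{supp}(\xi))$ inside $Z\times Z'$, so the almost-everywhere expansiveness inequality on pairs follows from that of $\pi$ and of $\xi$ together with the off-diagonal cases giving $0 \le 0$. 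That composition is respected reduces to the fact that the gluing/composition of the ``block sum'' couplings is the block sum of the glued couplings, which follows from the disintegration formula in Lemma~\ref{lem:gluing_lemma} applied blockwise.

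For the Lipschitz bound, the key is to relate the distortion functionals directly. Fix $H,H' \in \mathcal{H}$ and couplings $\pi \in \mathcal{C}(\mu,\mu')$, $\xi \in \mathcal{C}(\nu,\nu')$. Form $\pi_\mathsf{B}$ as above; I claim $\pi_\mathsf{B} \in \mathcal{C}(\mu_\mathsf{B},\mu'_\mathsf{B})$ (same computation as in the functoriality step). Now expand the GW distortion of $\pi_\mathsf{B}$:
\[
\mathrm{dis}_p^{\mathcal{N}}(\pi_\mathsf{B})^p = \int_{Z\times Z'}\int_{Z\times Z'} |\omega_\mathsf{B}(w,z) - \omega'_\mathsf{B}(w',z')|^p \, \pi_\mathsf{B}(dw\times dw')\,\pi_\mathsf{B}(dz\times dz').
\]
Since $\pi_\mathsf{B}$ is supported on $(X\times X')\sqcup(Y\times Y')$, the double integral splits into four blocks according to where $(w,w')$ and $(z,z')$ live, each carrying a factor of $\tfrac14$ from the two copies of $\tfrac12$. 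In the $(X\times X')\times(Y\times Y')$ block the integrand is $|\omega(w,z)-\omega'(w',z')|^p$, giving $\tfrac14\,\mathrm{dis}_p^{\mathcal{H}}(\pi,\xi)^p$; in the $(Y\times Y')\times(X\times X')$ block the integrand is $|\omega_\ast(w,z)-\omega'_\ast(w',z')|^p = |\omega(z,w)-\omega'(z',w')|^p$, giving another $\tfrac14\,\mathrm{dis}_p^{\mathcal{H}}(\pi,\xi)^p$; and in the two ``diagonal'' blocks ($X\times X'$ with $X\times X'$, and $Y\times Y'$ with $Y\times Y'$) the integrand is $|0-0|^p = 0$ since $\omega_\mathsf{B}$ vanishes on $X\times X$ and on $Y\times Y$. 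Hence $\mathrm{dis}_p^{\mathcal{N}}(\pi_\mathsf{B})^p = \tfrac12\,\mathrm{dis}_p^{\mathcal{H}}(\pi,\xi)^p$, i.e. $\mathrm{dis}_p^{\mathcal{N}}(\pi_\mathsf{B}) = 2^{-1/p}\,\mathrm{dis}_p^{\mathcal{H}}(\pi,\xi)$. For $p = \infty$, the analogous computation shows the essential supremum over $\pi_\mathsf{B}\otimes\pi_\mathsf{B}$ equals the essential supremum over $\pi\otimes\xi$ (the ess-sup of the two equal nonzero blocks, the zero blocks not increasing it), giving the factor $1$.

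Taking the infimum: $d_{\mathcal{N},p}(\mathsf{B}(H),\mathsf{B}(H')) \le \mathrm{dis}_p^{\mathcal{N}}(\pi_\mathsf{B}) = 2^{-1/p}\,\mathrm{dis}_p^{\mathcal{H}}(\pi,\xi)$ for every $(\pi,\xi)$, so infimizing over $\pi$ and $\xi$ yields $d_{\mathcal{N},p}(\mathsf{B}(H),\mathsf{B}(H')) \le 2^{-1/p}\, d_{\mathcal{H},p}(H,H')$, and the $p=\infty$ case gives the $1$-Lipschitz bound. The main obstacle I anticipate is not any single hard estimate but rather a bookkeeping subtlety: the reverse direction is \emph{not} needed (we only want an upper bound, i.e. Lipschitzness, not an isometry — that stronger statement is reserved for Theorem~\ref{thm:bipartite_equivalence} with a modified target metric), so I should resist the temptation to try to produce, from an arbitrary coupling of $\mu_\mathsf{B}$ and $\mu'_\mathsf{B}$, a pair $(\pi,\xi)$ — such a coupling may have mass on the ``mixed'' $X\times Y'$ and $Y\times X'$ parts of $Z\times Z'$ and need not decompose. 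The only genuine care needed is checking that $\pi_\mathsf{B}$ as defined really is a coupling (the $\tfrac12$ factors must reassemble $\mu_\mathsf{B}$ correctly under both marginal projections), and confirming the support containment $\mathrm{supp}(\pi_\mathsf{B}) \subseteq \mathrm{supp}(\pi)\sqcup\mathrm{supp}(\xi)$ used both for expansiveness and to justify that the two diagonal blocks contribute zero; both are straightforward from the definition of pushforward.
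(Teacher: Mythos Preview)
Your proposal is correct and follows essentially the same route as the paper: you define $\mathsf{B}$ on morphisms exactly as the paper does (piecewise map in $\mathcal{H}_\mathrm{m}$, block-sum coupling $\tfrac12((i_{X\times X'})_\#\pi + (i_{Y\times Y'})_\#\xi)$ in $\mathcal{H}_\mathrm{c}$), and your Lipschitz computation---splitting $\pi_\mathsf{B}\otimes\pi_\mathsf{B}$ into four blocks with the two diagonal blocks vanishing and the two off-diagonal blocks each contributing $\tfrac14\,\mathrm{dis}_p^\mathcal{H}(\pi,\xi)^p$---is precisely the paper's argument.

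The one place where the paper invests noticeably more effort than you indicate is the verification that $\mathsf{B}$ respects composition of expansive couplings. You write that this ``follows from the disintegration formula in Lemma~\ref{lem:gluing_lemma} applied blockwise,'' which is the right intuition, but the paper isolates this in a separate distributive-laws lemma (Lemma~\ref{lem:distributive_laws}) and then does a careful case analysis on product sets $A\times C \subset Z\times Z''$. A subtle factor-of-two bookkeeping issue arises: one must check that the disintegration kernel of $(i_{X\times X'})_\#\pi$ with respect to $\mu'_\mathsf{B}$ at $x'\in X'$ is $2\pi_{x'}$ (because $\mu'_\mathsf{B}|_{X'} = \tfrac12\mu'$), and this factor exactly compensates the $\tfrac14$ coming from expanding the product of two block-sum couplings, yielding the required $\tfrac12$. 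Your sketch is correct, but when you fill in details this is where the care is actually needed, not in the Lipschitz computation.
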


The proof will use an elementary lemma describing distributive properties of various probabilistic constructions.

\begin{lemma}\label{lem:distributive_laws}
Let $(X,\mu)$, $(X',\mu')$ and $(X'',\mu'')$ be Polish, Borel probability spaces, $\alpha,\beta \in \mathcal{C}(\mu,\mu')$ and $\alpha',\beta' \in \mathcal{C}(\mu',\mu'')$. Also let $\pi = \frac{1}{2}(\alpha + \beta) \in \mathcal{C}(\mu,\mu')$ and $\pi' = \frac{1}{2}(\alpha' + \beta') \in \mathcal{C}(\mu',\mu'')$. Then we have the following distributive laws:
\begin{enumerate}
    \item for $\mu'$-almost all $x' \in X'$, $\pi_{x'} = \frac{1}{2}(\alpha_{x'} + \beta_{x'})$;
    \item $\alpha' \boxtimes \pi = \frac{1}{2}(\alpha' \boxtimes \alpha + \alpha' \boxtimes \beta)$, and likewise $\pi' \boxtimes \alpha = \frac{1}{2}(\alpha' \boxtimes \alpha + \beta' \boxtimes \alpha)$;
    \item $\alpha' \bullet \pi = \frac{1}{2}(\alpha' \bullet \alpha + \alpha' \bullet \beta)$, and likewise $\pi' \bullet \alpha = \frac{1}{2}(\alpha' \bullet \alpha + \beta' \bullet \alpha)$.
\end{enumerate}
\end{lemma}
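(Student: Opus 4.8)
The plan is to establish the three distributive laws in the order listed, deriving each from the previous one, and relying on two facts from \autoref{lem:gluing_lemma}: the essential uniqueness of disintegration kernels and the characterization of the gluing by its two marginals. None of the steps is deep; the only place that needs real care is the first one, where ``uniqueness'' of the disintegration holds only up to a $\mu'$-null set.

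For Part 1, I would argue directly from the defining property $\pi_{x'}(dx)\,\mu'(dx') = \pi(dx \times dx')$. Substituting $\pi = \tfrac12(\alpha + \beta)$ and the analogous formulas for $\alpha$ and $\beta$ gives, for every Borel $A \subseteq X$ and $B \subseteq X'$,
\[
\int_B \tfrac12\big(\alpha_{x'}(A) + \beta_{x'}(A)\big)\,\mu'(dx') = \tfrac12\big(\alpha(A\times B) + \beta(A\times B)\big) = \pi(A\times B).
\]
Since $x' \mapsto \tfrac12(\alpha_{x'}(A) + \beta_{x'}(A))$ is measurable (by the measurability built into the disintegration statement), this shows that $x' \mapsto \tfrac12(\alpha_{x'} + \beta_{x'})$ is a disintegration kernel for $\pi$ with respect to $X'$; by the essential uniqueness in \autoref{lem:gluing_lemma}(1) it therefore agrees with $\pi_{x'}$ for $\mu'$-almost every $x'$.

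For Part 2, I would use that the gluing is the \emph{unique} measure on $X \times X' \times X''$ with prescribed $X\times X'$ and $X'\times X''$ marginals (\autoref{lem:gluing_lemma}(2)). Setting $\Theta := \tfrac12(\alpha'\boxtimes\alpha + \alpha'\boxtimes\beta)$ and using that pushforward commutes with convex combinations, one computes $(p_X\times p_{X'})_\#\Theta = \tfrac12(\alpha + \beta) = \pi$ and $(p_{X'}\times p_{X''})_\#\Theta = \tfrac12(\alpha' + \alpha') = \alpha'$; these are exactly the marginals characterizing $\alpha'\boxtimes\pi$, so $\Theta = \alpha'\boxtimes\pi$. (Alternatively, one may substitute Part 1 into the explicit formula $\pi'\boxtimes\pi(dx\times dx'\times dx'') = \pi'_{x'}(dx'')\,\pi_{x'}(dx)\,\mu'(dx')$.) The symmetric identity $\pi'\boxtimes\alpha = \tfrac12(\alpha'\boxtimes\alpha + \beta'\boxtimes\alpha)$ is handled identically, now checking that the right-hand side has $X\times X'$ marginal $\alpha$ and $X'\times X''$ marginal $\tfrac12(\alpha' + \beta') = \pi'$ (here one also uses Part 1 for the disintegration of $\pi'$ with respect to $X'$).

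Finally, Part 3 follows from Part 2 by applying the pushforward $(p_X\times p_{X''})_\#$, which by definition carries $\pi'\boxtimes\pi$ to $\pi'\bullet\pi$; since pushforward is linear this yields $\alpha'\bullet\pi = \tfrac12(\alpha'\bullet\alpha + \alpha'\bullet\beta)$ and, symmetrically, $\pi'\bullet\alpha = \tfrac12(\alpha'\bullet\alpha + \beta'\bullet\alpha)$. The only subtlety worth flagging in the write-up is that the a.e.-ambiguity introduced in Part 1 propagates harmlessly: the gluing and composition are genuine (not merely a.e.-defined) objects, and the marginal-uniqueness argument of Part 2 sidesteps any null-set bookkeeping altogether.
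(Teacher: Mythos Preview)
Your Parts 1 and 3 match the paper's proof exactly. For Part 2, however, your primary argument rests on the uniqueness clause in \autoref{lem:gluing_lemma}(2), and that clause is actually too strong to be true: a probability measure on $X\times X'\times X''$ is \emph{not} determined by its $X\times X'$ and $X'\times X''$ marginals alone. A quick counterexample: take $X=X'=X''=\{0,1\}$ with uniform measures and $\pi=\mu\otimes\mu'$, $\pi'=\mu'\otimes\mu''$; then both the full product $\mu\otimes\mu'\otimes\mu''$ and the measure putting mass $\tfrac14$ at each of $(0,0,0),(0,1,0),(1,0,1),(1,1,1)$ have the required marginals. What is genuinely unique is the conditionally-independent measure described by the explicit formula $\alpha'_{x'}(dx'')\pi_{x'}(dx)\mu'(dx')$, and marginal conditions alone do not pin that down. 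So checking that $\Theta:=\tfrac12(\alpha'\boxtimes\alpha+\alpha'\boxtimes\beta)$ has the right two marginals does not yet prove $\Theta=\alpha'\boxtimes\pi$.

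The paper avoids this by doing precisely what you offer parenthetically: it substitutes Part 1 into the explicit disintegration formula $\alpha'\boxtimes\pi(dx\times dx'\times dx'')=\alpha'_{x'}(dx'')\pi_{x'}(dx)\mu'(dx')$ and expands linearly. That route is short and correct, so promote it from an aside to your main argument; your remark about using Part 1 for the disintegration of $\pi'$ then becomes the actual content of the symmetric identity $\pi'\boxtimes\alpha=\tfrac12(\alpha'\boxtimes\alpha+\beta'\boxtimes\alpha)$.
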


\begin{proof}
Part 1 follows from
\begin{align*}
(\alpha_{x'} + \beta_{x'})(dx)\mu'(dx') &= \alpha_{x'}(dx)\mu'(dx') + \beta_{x'}(dx)\mu'(dx') \\
&= \alpha(dx \times dx') + \beta(dx \times dx') = 2 \cdot \pi(dx \times dx').
\end{align*}
Part 2 holds because
\begin{align*}
\alpha' \boxtimes \pi(dx \times dx' \times dx'') &= \alpha'_{x'}(dx'') \pi_{x'}(dx) \mu'(dx') \\
&= \alpha'_{x'}(dx'') \cdot \frac{1}{2}(\alpha_{x'}(dx) + \beta_{x'}(dx)) \cdot \mu'(dx') \\
&= \frac{1}{2} \alpha'_{x'}(dx'') \alpha_{x'}(dx) \mu'(dx') + \frac{1}{2} \alpha'_{x'}(dx'') \beta_{x'}(dx') \mu'(dx') \\
&= \frac{1}{2}(\alpha' \boxtimes \alpha + \alpha' \boxtimes \beta)(dx \times dx' \times dx'').
\end{align*}
Finally, Part 3 follows easily: 
\begin{align*}
\alpha' \bullet \pi &= (p_{X \times X''})_\# \alpha' \boxtimes \pi  \\
&= (p_{X \times X''})_\# \left( \frac{1}{2}(\alpha' \boxtimes \alpha + \alpha' \boxtimes \beta) \right) \\
&= \frac{1}{2} \left( (p_{X \times X''})_\# \alpha' \boxtimes \alpha + (p_{X \times X''})_\# \alpha' \boxtimes \beta \right) \\
&= \frac{1}{2}(\alpha' \bullet \alpha + \alpha' \bullet \beta).
\end{align*}\end{proof}

\begin{proof}[Proof of Proposition \ref{prop:incidence_map}]
Throughout the proof, let $H,H',H'' \in \mathcal{H}$ be arbitrary measure hypernetworks.

\para{Functorial.} We first establish functoriality with respect to expansive maps. Let $(\phi,\psi)$ be an expansive map for measure hypernetworks $H$ and $H'$ and define $\mathsf{B}(\phi,\psi): \mathsf{B}(H) \to \mathsf{B}(H')$ by
\[
\mathsf{B}(\phi,\psi)(z) = \left\{\begin{array}{cl}
\phi(z) & \mbox{if $z \in X$} \\
\psi(z) & \mbox{if $z \in Y$}.
\end{array}\right.
\]
Then for every $(w,z) \in Z \times Z$,
\begin{align*}
\omega_\mathsf{B}(w,z) &= \left\{
\begin{array}{cl}
\omega(w,z) & \mbox{if $w \in X$ and $z \in Y$} \\
\omega_\ast(w,z) = \omega(z,w) & \mbox{if $z \in X$ and $w \in Y$} \\
0 & \mbox{otherwise}
\end{array}\right. \\
&\leq 
\left\{
\begin{array}{cl}
\omega'(\phi(w),\psi(z)) & \mbox{if $w \in X$ and $z \in Y$} \\
\omega'(\phi(z),\psi(w)) = \omega_\ast'(\psi(w),\phi(z)) & \mbox{if $z \in X$ and $w \in Y$} \\
0 & \mbox{otherwise}
\end{array}\right. \\
&= \omega_\mathsf{B}'(\mathsf{B}(\phi,\psi)(w),\mathsf{B}(\phi,\psi)(z)),
\end{align*}
so that $\mathsf{B}(\phi,\psi)$ is also expansive. Moreover, $\mathsf{B}$ respects composition: given expansive maps $(\phi,\psi)$ from $H$ to $H'$ and $(\phi',\psi')$ from $H'$ to $H''$, we have
\[
\mathsf{B}(\phi',\psi') \circ \mathsf{B}(\phi,\psi)(z) = \left\{\begin{array}{cl}
\mathsf{B}(\phi',\psi')(\phi(z)) & \mbox{if $z \in X$} \\
\mathsf{B}(\phi',\psi')(\psi(z)) & \mbox{if $z \in Y$.}
\end{array}\right.
\]

Functoriality for expansive couplings is similar. Now assume that $H,H',H''$ are compact measure hypernetworks. Given an expansive coupling $(\pi,\xi)$ of $H$ and $H'$, define
\begin{equation}\label{eqn:incidence_coupling}
\mathsf{B}(\pi,\xi) = \frac{1}{2}\left((i_{X \times X'})_\# \pi + (i_{Y \times Y'})_\# \xi\right),
\end{equation}
where, for $A \in \{X,Y\}$, $i_{A \times A'}$ is the inclusion map taking $A \times A'$ into
\[
Z \times Z' = (X \cup Y) \times (X' \cup Y') = X \times X' \cup X \times Y' \cup Y \times X' \cup Y \times Y'.
\]
We claim that $\mathsf{B}(\pi,\xi)$ is an expansive coupling of $\mathsf{B}(H)$ and $\mathsf{B}(H')$. In the following, we denote the proposed coupling as $\rho := \mathsf{B}(\pi,\xi)$ in order to compactify notation. To see that $\rho$ is a coupling, consider the coordinate projection $p_Z:Z \times Z' \to Z$. We have 
\begin{align*}
(p_Z)_\# \rho &= (p_Z)_\#\frac{1}{2}\left((i_{X \times X'})_\# \pi + (i_{Y \times Y'})_\# \xi\right) \\
&= \frac{1}{2}\left((p_Z)_\#(i_{X \times X'})_\# \pi + (p_Z)_\#(i_{Y \times Y'})_\# \xi\right).
\end{align*}
Now observe that
\[
(p_Z)_\#(i_{X \times X'})_\# = (p_Z \circ i_{X \times X'})_\# = (i_X \circ p_X)_\# = (i_X)_\# (p_X)_\#,
\]
where $i_X:X \to Z$ is inclusion and $p_X:X \times X' \to X$ is projection. Using a similar equivalence for $Y$, we have
\[
(p_Z)_\# \rho = \frac{1}{2}\left((i_X)_\# (p_X)_\# \pi + (i_Y)_\# (p_Y)_\# \xi \right) = \frac{1}{2}\left((i_X)_\# \mu + (i_Y)_\# \nu \right) = \mu_\mathsf{B}.
\]
Similar reasoning holds for the projection to $Z'$, so we conclude that $\rho$ is a coupling. Expansiveness of $\rho$ follows by an argument similar to the expansive maps case above.

It remains to show that compositions are respected under $\mathsf{B}$. Let $(\pi,\xi)$ be an expansive coupling of measure hypernetworks $H$ and $H'$ and let $(\pi',\xi')$ be an expansive coupling of $H'$ and $H''$. It suffices to show that $\mathsf{B}(\pi' \bullet \pi, \xi' \bullet \xi)(A \times C) = \mathsf{B}(\pi', \xi') \bullet \mathsf{B}(\pi, \xi)(A \times C)$ on product sets $A \times C \subset Z \times Z''$. For an arbitrary product set, we have 
\[
A \times C = (A \cap X) \times (C \cap X'') \cup (A \cap X) \times (C \cap Y'') \cup (A \cap Y) \times (C \cap X'') \cup (A \cap Y) \times (C \cap Y''),
\]
where the unions are disjoint. By additivity, we can restrict our attention to the special cases where $A \times C$ is a subset of one of the product sets $X \times X''$, $X \times Y''$, $Y \times X''$ or $Y \times Y''$. The proof strategies for the various cases are the same, so we will give details in the case that $A \times C \subset X \times X''$.  In this case, we have
\begin{align}
2 \cdot \mathsf{B}(\pi' \bullet \pi, \xi' \bullet \xi)(A \times C) &= (i_{X \times X''})_\# (\pi' \bullet \pi) (A \times C) + (i_{Y \times Y''})_\# (\xi' \bullet \xi) (A \times C) \nonumber \\
&=  \pi' \bullet \pi (A \times C), \label{eqn:equality_of_compositions_3}
\end{align}
since $i_{Y \times Y''}^{-1}(A \times C) = \emptyset$.

Now consider $\mathsf{B}(\pi', \xi') \bullet \mathsf{B}(\pi, \xi)(A \times C)$. Lemma \ref{lem:distributive_laws} implies
\begin{align*}
&\mathsf{B}(\pi', \xi') \bullet \mathsf{B}(\pi, \xi) \\
& = \frac{1}{2}\left((i_{X' \times X''})_\# \pi' + (i_{Y' \times Y''})_\# \xi'\right)\bullet \frac{1}{2}\left((i_{X \times X'})_\# \pi + (i_{Y \times Y'})_\# \xi\right) \\
&= \frac{1}{4} \left((i_{X' \times X''})_\# \pi' \bullet (i_{X \times X'})_\# \pi + (i_{X' \times X''})_\# \pi' \bullet (i_{Y \times Y'})_\# \xi \right. \\
&\hspace{1in} \left. + (i_{Y' \times Y''})_\# \xi' \bullet (i_{X \times X'})_\# \pi + (i_{Y' \times Y''})_\# \xi' \bullet (i_{Y \times Y'})_\# \xi\right).
\end{align*}
By examining supports of the factors, we conclude that, for a product set $A \times C \subset X \times X''$, 
\begin{equation}\label{eqn:equality_of_compositions_2}
\mathsf{B}(\pi', \xi') \bullet \mathsf{B}(\pi, \xi)(A \times C) = \frac{1}{4}\cdot \left((i_{X' \times X''})_\# \pi' \bullet (i_{X \times X'})_\# \pi\right) (A \times C).
\end{equation}
Comparing \eqref{eqn:equality_of_compositions_3} and \eqref{eqn:equality_of_compositions_2}, we see that it is left to show that
\begin{equation}\label{eqn:equality_of_compositions}
\left((i_{X' \times X''})_\# \pi' \bullet (i_{X \times X'})_\# \pi\right) (A \times C) = 2\cdot \pi' \bullet \pi(A \times C),
\end{equation}
for $A \times C \subset X \times X''$. To see this, we first observe that, for $x \in X$ and $x' \in X'$,
\begin{align*}
\pi_{x'}(dx)\mu'(dx') &= \pi(dx \times dx') \\
&= (i_{X \times X'})_\# \pi(dx \times dx') \\
&= \left((i_{X \times X'})_\# \pi \right)_{x'}(dx) \mu'_{\mathsf{B}}(dx') \\
&= \frac{1}{2}  \left((i_{X \times X'})_\# \pi \right)_{x'}(dx) \mu'(dx'),
\end{align*}
and we deduce that 
\begin{equation}\label{eqn:comparing_disintegrations}
\left((i_{X \times X'})_\# \pi \right)_{x'} = 2 \cdot \pi_{x'}.
\end{equation}
We apply this to derive \eqref{eqn:equality_of_compositions} as follows:
\begin{align*}
&\left((i_{X' \times X''})_\# \pi' \bullet (i_{X \times X'})_\# \pi\right) (A \times C) \\
& \hspace{0.5in}= \int_{X'} \left((i_{X' \times X''})_\# \pi' \right)_{x'}(C)\left((i_{X \times X'})_\# \pi \right)_{x'}(A) \mu'_{\mathsf{B}}(dx') \\
&\hspace{0.5in}= \int_{X'} 2\pi'_{x'}(C) \cdot 2 \pi_{x'}(A) \cdot \frac{1}{2} \mu'(dx') \\
&\hspace{0.5in}= 2 \cdot \pi' \bullet \pi (A \times C),
\end{align*}
where the second equality follows by applying \eqref{eqn:comparing_disintegrations}, together with the corresponding statement for $X' \times X''$, and from the calculation
\[
\mu'_{\mathsf{B}}(dx') = \frac{1}{2}((i_{X'})_\#\mu'(dx') + (i_{Y'})_\#\nu'(dx')) = \frac{1}{2}\mu'(dx').
\]

\para{Lipschitz.} It remains to show that $\mathsf{B}$ is Lipschitz. Let $H,H' \in \mathcal{H}$ and let $\pi \in \mathcal{C}(\mu,\mu')$ and $\xi \in \mathcal{C}(\nu,\nu')$ and consider the coupling $\rho = \mathsf{B}(\pi,\xi) \in \mathcal{C}(\mu_\mathsf{B},\mu_\mathsf{B}')$ from \eqref{eqn:incidence_coupling}. 

For $p \in [1,\infty)$, consider the $p$-network distortion for $\mathsf{B}(H)$ and $\mathsf{B}(H')$,
\begin{equation}\label{eqn:network_distortion_bipartite}
\mathrm{dis}_{p}^\mathcal{N}(\rho)^p = \int_{Z \times Z'} \int_{Z\times Z'} \lvert \omega_\mathsf{B}(w,z) - \omega_\mathsf{B}'(w',z') \rvert^p \rho(dw \times dw') \rho(dz \times dz').
\end{equation}
The measure being integrated against can be written as
\begin{align*}
&\rho \otimes \rho = \frac{1}{4}\left( (i_{X \times X'})_\# \pi \otimes (i_{X \times X'})_\# \pi \right. \\
&\qquad \qquad \qquad \left.+ 2 (i_{X \times X'})_\# \pi \otimes (i_{Y \times Y'})_\# \xi + (i_{Y \times Y'})_\# \xi \otimes (i_{Y \times Y'})_\# \xi\right),
\end{align*}
so that \eqref{eqn:network_distortion_bipartite} is a sum of integrals with respect to each term. The integral over the first term is
\begin{align*}
    &\frac{1}{4}\int_{Z \times Z'} \int_{Z \times Z'} \lvert \omega_\mathsf{B}(w,z) - \omega_\mathsf{B}'(w',z') \rvert^p (i_{X \times X'})_\# \pi(dw \times dw') (i_{X \times X'})_\# \pi(dz \times dz') \\
    &\quad = \frac{1}{4}\int_{X \times X'} \int_{X \times X'} \lvert \omega_\mathsf{B}(x_1,x_2) - \omega_\mathsf{B}'(x_1',x_2') \rvert^p \pi(dx_1 \times dx_1') \pi(dx_2 \times dx_2') = 0,
\end{align*}
by the definition of $\omega_\mathsf{B}$. Likewise, the integral with respect to the $(i_{Y \times Y'})_\# \xi \otimes (i_{Y \times Y'})_\# \xi$-term vanishes. Finally, the integral with respect to the cross term becomes
\[
\frac{1}{2} \int_{X \times X'} \int_{Y \times Y'} \lvert \omega(x,y) - \omega'(x',y') \rvert^p \pi(dx \times dx')\xi(dy \times dy') = \frac{1}{2} \mathrm{dis}^\mathcal{H}_p(\pi,\xi)^p.
\]
Therefore
$
\mathrm{dis}_p^\mathcal{N}(\rho) = \frac{1}{2^{1/p}}\mathrm{dis}_p^\mathcal{H}(\pi,\xi),
$
and by a limiting argument we see that 
$
\mathrm{dis}_\infty^\mathcal{N}(\rho) = \mathrm{dis}_\infty^\mathcal{H}(\pi,\xi).
$
Since $\pi$ and $\xi$ were arbitrary, the Lipschitz claim follows.

\end{proof}

In general, one can construct non-weakly isomorphic hypernetworks $H,H'$ such that $d_{\mathcal{N},p}(\mathsf{B}(H),\mathsf{B}(H')) = 0$ (e.g., take $H' = H_\ast$ for any $H$ not weakly isomorphic to its dual), indicating that $\mathsf{B}$ does not define a bi-Lipschitz equivalence. However, the proof of Proposition \ref{prop:incidence_map} suggests an alternative characterization of $d_{\mathcal{H},p}$, which we will now explain.

\begin{definition}
A measure network $N = (Z,\mu,\omega)$ is called \emph{bipartite} if there exists a topological disconnection $Z = X \cup Y$ such that $\mu(X) = \mu(Y) = \frac{1}{2}$ and for all $x_1,x_2 \in X$ and $y_1,y_2 \in Y$ we have $\omega(x_1,x_2) = \omega(y_1,y_2) = 0$ and $\omega(x_1,y_1) = \omega(y_1,x_1)$. A \emph{labeled bipartite network} is a bipartite network with a fixed decomposition, written as $N = (X,Y,\mu,\omega)$. The collection of labeled bipartite measure networks is denoted $\mathcal{B}$. One defines expansive maps and expansive couplings between labeled bipartite measure networks in the obvious way. Let $\mathcal{B}_\mathrm{m}$ denote the category of labeled bipartite networks and expansive maps as morphisms, and let $\mathcal{B}_\mathrm{c}$ denote the category of compact labeled bipartite networks with expansive couplings as morphisms.
\end{definition}

We now define a variant of the network GW distance for labeled bipartite networks $N = (X,Y,\mu,\omega)$ and $N' = (X',Y',\mu',\omega')$ which respects labels. A \emph{labeled coupling} of $\mu$ and $\mu'$ is a coupling $\pi \in \mathcal{C}(\mu,\mu')$ such that 
$
\mathrm{supp}(\pi) \subset (X \times X') \cup (Y \times Y').
$
Let $\mathcal{C}_\mathcal{B}((X,Y,\mu),(X',Y',\mu'))$ denote the set of all labeled couplings.

\begin{definition}
The \emph{labeled Gromov-Wasserstein $p$-distance} $d_{\mathcal{B},p}:\mathcal{B} \times \mathcal{B} \to \Rspace$ is 
\[
    d_{\mathcal{B},p}(N,N') = \inf_{\pi \in \mathcal{C}_\mathcal{B}((X,Y,\mu),(X',Y',\mu'))} \mathrm{dis}_p^\mathcal{N}(\pi).
\] 
\end{definition}

\begin{theorem}
\label{thm:bipartite_equivalence}
The bipartite incidence map induces functors $\mathsf{B}:\mathcal{H}_\mathrm{m} \to \mathcal{B}_\mathrm{m}$ and $\mathsf{B}:\mathcal{H}_\mathrm{c} \to \mathcal{B}_\mathrm{c}$, and is an isometry from $(\mathcal{H},d_{\mathcal{H},p})$ to $(\mathcal{B},2^{1/p} \cdot d_{\mathcal{B},p})$ for $p \in [1,\infty)$ and to $(\mathcal{B},d_{\mathcal{B},p})$ for $p = \infty$.
\end{theorem}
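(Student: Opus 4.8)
The plan is to bootstrap almost everything from Proposition~\ref{prop:incidence_map}, which already does most of the work: it shows $\mathsf{B}$ is functorial into $\mathcal{N}_\mathrm{m}$ and $\mathcal{N}_\mathrm{c}$ and, crucially, manufactures for every pair of couplings $\pi \in \mathcal{C}(\mu,\mu')$, $\xi \in \mathcal{C}(\nu,\nu')$ an explicit coupling $\mathsf{B}(\pi,\xi)$ of $\mu_\mathsf{B}$ and $\mu_\mathsf{B}'$ with $\mathrm{dis}_p^\mathcal{N}(\mathsf{B}(\pi,\xi)) = 2^{-1/p}\,\mathrm{dis}_p^\mathcal{H}(\pi,\xi)$ for $p<\infty$ and $\mathrm{dis}_\infty^\mathcal{N}(\mathsf{B}(\pi,\xi)) = \mathrm{dis}_\infty^\mathcal{H}(\pi,\xi)$. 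First I would record that $\mathsf{B}(H)$ is always a labeled bipartite network: $Z = X \cup Y$ is a topological disconnection, \eqref{eqn:incidence_measure} gives $\mu_\mathsf{B}(X) = \mu_\mathsf{B}(Y) = \tfrac12$, and by definition $\omega_\mathsf{B}$ vanishes on $X\times X$ and on $Y\times Y$ and is symmetric across $X\times Y$. For functoriality into the bipartite categories, the morphisms produced in the proof of Proposition~\ref{prop:incidence_map} are already of the right type: $\mathsf{B}(\phi,\psi)$ sends $X$ into $X'$ and $Y$ into $Y'$, and $\mathsf{B}(\pi,\xi)$ from \eqref{eqn:incidence_coupling} is supported on $(X\times X')\cup(Y\times Y')$, hence a labeled coupling. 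Since $X\times X'$ and $Y\times Y'$ are clopen in $Z\times Z'$, a short argument via Lemma~\ref{lem:supp1} shows that the gluing of two labeled couplings is again labeled; together with the fact that diagonal couplings are labeled, this exhibits $\mathcal{B}_\mathrm{m}$ and $\mathcal{B}_\mathrm{c}$ as (non-full) subcategories of $\mathcal{N}_\mathrm{m}$ and $\mathcal{N}_\mathrm{c}$ through which $\mathsf{B}$ factors, so the functor claims follow.

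For the isometry I would set up an explicit bijection between the relevant coupling sets. Define $\Phi: \mathcal{C}(\mu,\mu')\times\mathcal{C}(\nu,\nu') \to \mathcal{C}_\mathcal{B}(\mathsf{B}(H),\mathsf{B}(H'))$ by $\Phi(\pi,\xi) = \mathsf{B}(\pi,\xi)$ and, in the reverse direction, $\Psi(\rho) = \big(2\,\rho\mid_{X\times X'},\ 2\,\rho\mid_{Y\times Y'}\big)$, where the restrictions make sense by clopenness. The one substantive point is that $\Psi$ has the claimed codomain: because $\rho$ is labeled it assigns zero mass to the clopen set $X\times Y'$, so for Borel $A\subset X$ one gets $\rho(A\times X') = \rho(A\times Z') = \mu_\mathsf{B}(A) = \tfrac12\mu(A)$, whence $2\,\rho\mid_{X\times X'}$ has marginals $\mu$ and $\mu'$; symmetrically for the hyperedge part. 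That $\Phi$ and $\Psi$ are mutually inverse is then immediate from $\mu_\mathsf{B}\mid_X = \tfrac12\mu$ and the fact that a labeled coupling is concentrated on $(X\times X')\cup(Y\times Y')$. (The same rescaling recipe applied to a single labeled bipartite network shows $\mathsf{B}$ is onto $\mathcal{B}$, so this is in fact an isometric isomorphism of pseudometric spaces, though only distance-preservation is needed for the statement.)

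It remains to compare distortions along the bijection. Writing $\rho_X = \rho\mid_{X\times X'}$ and $\rho_Y = \rho\mid_{Y\times Y'}$ and expanding $\rho\otimes\rho$ into its four pieces, the $\rho_X\otimes\rho_X$ and $\rho_Y\otimes\rho_Y$ contributions to $\mathrm{dis}_p^\mathcal{N}(\rho)^p$ vanish since $\omega_\mathsf{B},\omega_\mathsf{B}'$ are zero on $X\times X$, $X'\times X'$ (resp.\ on $Y\times Y$, $Y'\times Y'$), while the two cross pieces are equal and each equals $\iint|\omega(x,y)-\omega'(x',y')|^p\,\rho_X(dx\times dx')\,\rho_Y(dy\times dy')$ by the definitions of $\omega_\mathsf{B}$ and $\omega_\ast$. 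Hence $\mathrm{dis}_p^\mathcal{N}(\rho)^p = 2\iint|\omega-\omega'|^p\,d\rho_X\,d\rho_Y = \tfrac12\,\mathrm{dis}_p^\mathcal{H}(\Psi(\rho))^p$, i.e.\ $\mathrm{dis}_p^\mathcal{H}(\pi,\xi) = 2^{1/p}\,\mathrm{dis}_p^\mathcal{N}(\mathsf{B}(\pi,\xi))$ for every $(\pi,\xi)$; for $p=\infty$ the same support bookkeeping gives $\mathrm{dis}_\infty^\mathcal{H}(\pi,\xi) = \mathrm{dis}_\infty^\mathcal{N}(\mathsf{B}(\pi,\xi))$, since $L^\infty$-norms are insensitive to rescaling the reference measure. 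Taking infima over the two coupling sets, which are in bijection under $\Phi$, yields $d_{\mathcal{H},p}(H,H') = 2^{1/p}\,d_{\mathcal{B},p}(\mathsf{B}(H),\mathsf{B}(H'))$ for $p<\infty$ and $d_{\mathcal{H},\infty} = d_{\mathcal{B},\infty}\circ\mathsf{B}$. The main obstacle is really just the bookkeeping around $\Psi$ — verifying that the rescaled restrictions have the right marginals, which is precisely where labeledness of $\rho$ enters — together with keeping the clopen decomposition of $Z\times Z'$ straight; everything beyond that is a direct consequence of Proposition~\ref{prop:incidence_map} and the piecewise definition of $\omega_\mathsf{B}$.
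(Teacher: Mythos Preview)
Your proposal is correct and follows essentially the same approach as the paper: functoriality is inherited from Proposition~\ref{prop:incidence_map}, and the isometry is obtained by pairing the coupling $\mathsf{B}(\pi,\xi)$ from \eqref{eqn:incidence_coupling} with the inverse construction $\rho \mapsto (2\rho\!\mid_{X\times X'},\,2\rho\!\mid_{Y\times Y'})$ and recomputing distortions via the four-piece expansion of $\rho\otimes\rho$. You supply a bit more detail than the paper (e.g.\ checking that $\mathcal{B}_\mathrm{c}$ is closed under composition and verifying the marginals of $\Psi(\rho)$ explicitly), but the argument is the same.
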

\begin{proof}
Functoriality follows from Proposition \ref{prop:incidence_map}. It remains to show that $\mathsf{B}$ is an isometry onto $\mathcal{B}$.

The bipartite incidence functor is a bijection $\mathsf{B}:\mathcal{H} \to \mathcal{B}$, with inverse
\[
(X,Y,\mu,\omega) \to (X,2\cdot \mu\mid_X,Y,2\cdot \mu\mid_Y,\omega\mid_{X \times Y}),
\]
where $\mu\mid_X$ is the measure on $X$ defined by 
$
\mu\mid_X(U) = \mu(U)
$
for any Borel $U \subset X \subset X \cup Y$, $\mu\mid_Y$ is defined similarly and $\omega\mid_{X \times Y}$ is the restriction of $\omega$ to pairs in $X \times Y \subset (X \cup Y) \times (X \cup Y)$. 

Now let $H,H' \in \mathcal{H}$. We consider $p \in [1,\infty)$, with the $p = \infty$ case following similarly. Proposition \ref{prop:incidence_map} shows that $2^{1/p} d_{\mathcal{B},p}(\mathsf{B}(H),\mathsf{B}(H')) \leq  d_{\mathcal{H},p}(H,H')$: given $\pi \in \mathcal{C}(\mu,\mu')$ and $\xi \in \mathcal{C}(\nu,\nu')$, the coupling $\mathsf{B}(\pi,\xi)$ defined in \eqref{eqn:incidence_coupling} is an element of $\mathcal{C}_\mathcal{B}((X,Y,\mu_\mathsf{B}),(X',Y',\mu_\mathsf{B}'))$ ($\mu_\mathsf{B}$ and $\mu_\mathsf{B}'$ defined as in \eqref{eqn:incidence_coupling}), so the proof of Lipschitzness goes through. To prove the reverse inequality, let $\rho$ be an arbitrary coupling in $\mathcal{C}_\mathcal{B}((X,Y,\mu_\mathsf{B}),(X',Y',\mu_\mathsf{B}'))$ and define $\pi$ and $\xi$ on Borel sets $U,V$ by
\[
\pi(U) = 2\cdot\rho(U \cap (X \times X')) \qquad \mbox{and} \qquad \xi(V) = 2\cdot\rho(V \cap (Y \times Y')).
\]
Then $\pi \in \mathcal{C}(\mu,\mu')$ and $\xi \in \mathcal{C}(\nu,\nu')$ and computations similar to those in the proof of Proposition \ref{prop:incidence_map} can be used to show that
\begin{align*}
    \mathrm{dis}_{\mathsf{B}(H),\mathsf{B}(H'),p}(\rho) &= \frac{1}{2^{1/p}}\mathrm{dis}_{H,H',p}(\pi,\xi).
\end{align*}
It follows that $2^{1/p} d_{\mathcal{B},p}(\mathsf{B}(H),\mathsf{B}(H')) \geq d_{\mathcal{H},p}(H,H')$.
\end{proof}

\subsection{The Clique Expansion and Line Graph Maps}\label{sec:clique_expansion}

We now give similar consideration to the line graph and clique expansion maps described in~\autoref{sec:transformations}.

\begin{definition}\label{def:clique_expansion}
Let $H = (X,\mu,Y,\nu,\omega)$ be a measure hypernetwork. For $q \in [1,\infty]$, we define the \emph{$q$-clique expansion} of $H$ to be the measure network $\mathsf{Q}_q(H) = (X,\mu,\omega_{\mathsf{Q}_q})$, where
\[
\omega_{\mathsf{Q}_q}(x_1,x_2) := \left\|\min_{j=1,2} \omega(x_j,\cdot) \right\|_{L^q(\nu)},
\]
where $\omega(x,\cdot)$ denotes the measurable function $Y \to \Rspace$ defined by $y \mapsto \omega(x,y)$. 

Similarly, the \emph{$q$-line graph} of $H$ is defined to be $\mathsf{L}_q(H) = (Y,\nu,\omega_{\mathsf{L}_q})$, where
\[
\omega_{\mathsf{L}_q}(y_1,y_2) := \left\|\min_{j=1,2} \omega(\cdot,y_j) \right\|_{L^q(\mu)},
\]
where $\omega(\cdot,y):X \to \Rspace$ is defined by $x \mapsto \omega(x,y)$.
\end{definition}

\begin{remark}
Suppose that $(X,Y)$ is a (combinatorial) hypergraph, represented as a measure hypernetwork $H = (X,\mu,Y,\nu,\omega)$ with $\mu$ and $\nu$ uniform and $\omega$ encoding hyperedge incidence ($\omega(x,y) = 1$ if $x \in y$ and zero otherwise). Then $Q_\infty(H) = (X,\mu,\omega_{\mathsf{Q}_\infty})$ captures the classical clique expansion construction, where $\omega_{\mathsf{Q}_\infty}$ is the adjacency function. Indeed, for all $x_1,x_2 \in X$, $\omega_{\mathsf{Q}_\infty}(x_1,x_2) = 1$ if and only if there exists $y \in Y$ such that $x_1,x_2 \in y$, and is equal to zero otherwise. Similarly, $Q_1(H)$ returns the clique expansion with edge $\{x_1,x_2\}$ weighted by the proportion of hyperedges $y$ containing both $x_1$ and $x_2$, another typical convention.
\end{remark}

\begin{remark}
It is clear, by definition, that $\mathsf{L}_q(H) = \mathsf{Q}_q(H_\ast)$, where $H_\ast$ is the dual of $H$. Then $\mathsf{L}_q(H)$ captures the line graph transformation from~\autoref{sec:transformations} (particularly, in the $q=1$ and $q=\infty$ cases).
\end{remark}

\begin{theorem}\label{thm:p_clique}
The $q$-clique expansion and $q$-line graph maps $\mathsf{Q}_q,\mathsf{L}_q:\mathcal{H} \to \mathcal{N}$:
\begin{itemize}
    \item are Lipschitz with respect to $d_{\mathcal{H},p}$ and $d_{\mathcal{N},p}$ when $1 \leq q \leq p \leq \infty$, and
    \item induce functors $\mathsf{Q}_q,\mathsf{L}_q:\mathcal{H}_\mathrm{c} \to \mathcal{N}_\mathrm{c}$ for all $q \in [1,\infty]$ and $\mathsf{Q}_\infty,\mathsf{L}_\infty:\mathcal{H}_\mathrm{m} \to \mathcal{N}_\mathrm{m}$.
\end{itemize}
\end{theorem}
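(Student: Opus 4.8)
The plan is to reduce everything to the clique-expansion map via duality, then prove the Lipschitz bound and the two functoriality statements separately. Since $\mathsf{L}_q(H) = \mathsf{Q}_q(H_\ast)$ and the dualization map $H \mapsto H_\ast$ is a functorial isometry on $\mathcal{H}_\mathrm{m}$, on $\mathcal{H}_\mathrm{c}$, and on $(\mathcal{H},d_{\mathcal{H},p})$ by Proposition~\ref{prop:dualization}, every claim about $\mathsf{L}_q$ is inherited from the corresponding claim about $\mathsf{Q}_q$, so I would only treat $\mathsf{Q}_q$. A preliminary remark is that $\mathsf{Q}_q$ does send objects to objects: $\omega_{\mathsf{Q}_q}$ is nonnegative, bounded by $\|\omega\|_{L^\infty}$, and measurable (by Fubini--Tonelli for $q<\infty$, and as a pointwise limit of the $L^q$-versions for $q=\infty$), and $\mathsf{Q}_q(H)$ has underlying space $X$, which is compact whenever $H \in \mathcal{H}_\mathrm{c}$.

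For the Lipschitz bound, fix $1 \le q \le p \le \infty$ and couplings $\pi \in \mathcal{C}(\mu,\mu')$, $\xi \in \mathcal{C}(\nu,\nu')$; the key point is that $\pi$ is itself a coupling of the node measures of $\mathsf{Q}_q(H)$ and $\mathsf{Q}_q(H')$, and I would bound $\mathrm{dis}_p^{\mathcal{N}}(\pi)$ (for this pair of networks) by $\mathrm{dis}_p^{\mathcal{H}}(\pi,\xi)$ up to a constant. Using the marginal conditions on $\xi$ one rewrites $\omega_{\mathsf{Q}_q}(x_1,x_2) = \|\,(y,y') \mapsto \min_j \omega(x_j,y)\,\|_{L^q(\xi)}$ and similarly for $\omega'_{\mathsf{Q}_q}$; combining the reverse triangle inequality in $L^q(\xi)$ with the elementary bound $|\min(a_1,a_2) - \min(b_1,b_2)| \le \max_j|a_j-b_j|$ gives the pointwise estimate
\[
|\omega_{\mathsf{Q}_q}(x_1,x_2) - \omega'_{\mathsf{Q}_q}(x_1',x_2')| \le \bigl(\Delta(x_1,x_1')^q + \Delta(x_2,x_2')^q\bigr)^{1/q},
\qquad \Delta(x,x') := \|\omega(x,\cdot)-\omega'(x',\cdot)\|_{L^q(\xi)}.
\]
Raising to the power $p$, integrating against $\pi \otimes \pi$, and applying Minkowski's inequality in $L^{p/q}(\pi \otimes \pi)$ (valid since $p/q \ge 1$) to split the two summands gives $\mathrm{dis}_p^{\mathcal{N}}(\pi) \le 2^{1/q}\bigl(\int_{X \times X'} \Delta(x,x')^p\,\pi(dx\times dx')\bigr)^{1/p}$; a final use of Jensen's inequality (again $p/q \ge 1$, with the probability measure $\xi$) bounds the right-hand side by $2^{1/q}\,\mathrm{dis}_p^{\mathcal{H}}(\pi,\xi)$. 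Taking infima over $\pi,\xi$ yields $d_{\mathcal{N},p}(\mathsf{Q}_q(H),\mathsf{Q}_q(H')) \le 2^{1/q} d_{\mathcal{H},p}(H,H')$, and the $p=\infty$ case goes the same way with $\|\cdot\|_{L^q(\xi)} \le \|\cdot\|_{L^\infty(\xi)}$ replacing the Jensen step.

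For functoriality I would set $\mathsf{Q}_q(\pi,\xi) := \pi$ on morphisms of $\mathcal{H}_\mathrm{c}$, and $\mathsf{Q}_\infty(\phi,\psi) := \phi$ on morphisms of $\mathcal{H}_\mathrm{m}$. In the coupling category: if $(\pi,\xi)$ is an expansive coupling of $H,H'$, then Fubini applied to the $\pi\otimes\xi$-a.e.\ inequality $\omega(x,y)\le\omega'(x',y')$ shows that for $\pi$-a.e.\ $(x,x')$ one has $\omega(x,y)\le\omega'(x',y')$ for $\xi$-a.e.\ $(y,y')$; intersecting two such full-measure sets and passing to minima shows that for $\pi\otimes\pi$-a.e.\ pair, $\min_j\omega(x_j,y)\le\min_j\omega'(x_j',y')$ $\xi$-a.e., and integrating (after the same $L^q(\xi)$-rewriting) gives $\omega_{\mathsf{Q}_q}(x_1,x_2)\le\omega'_{\mathsf{Q}_q}(x_1',x_2')$, i.e.\ $\pi$ is an expansive coupling of the clique expansions. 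Compositions are respected because morphisms of $\mathcal{H}_\mathrm{c}$ compose componentwise and the gluing of the $X$-factor couplings is unchanged by applying $\mathsf{Q}_q$, and $\mathsf{Q}_q(\mathbbm{1}_H) = \mathbbm{1}_{\mathsf{Q}_q(H)}$. In the expansive-map category with $q=\infty$: an expansive map satisfies $\omega(x,y)\le\omega'(\phi(x),\psi(y))$ for all $(x,y)$, so $\min_j\omega(x_j,\cdot)\le \bigl(\min_j\omega'(\phi(x_j),\cdot)\bigr)\circ\psi$ pointwise; applying the (monotone) $L^\infty$ norm and using $\psi(Y)\subseteq Y'$ gives $\omega_{\mathsf{Q}_\infty}(x_1,x_2)\le\omega'_{\mathsf{Q}_\infty}(\phi(x_1),\phi(x_2))$, and compositions and identities are immediate. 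Note the last case genuinely uses $q=\infty$: for $q<\infty$ the value $\omega_{\mathsf{Q}_q}$ depends on the hyperedge measures $\nu,\nu'$, which an expansive map (unlike an expansive coupling) does not relate.

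The main obstacle is the Lipschitz estimate, and within it the reconciliation of the two exponents: the construction of $\mathsf{Q}_q$ aggregates over hyperedges in an $L^q$ fashion, whereas the ambient network distortion is an $L^p$ quantity, and both the Minkowski step (to decouple the contributions of $x_1$ and $x_2$) and the Jensen step (to pass from an $L^q$ to an $L^p$ integral over hyperedges) require $q \le p$ — exactly the hypothesis of the first bullet. The functoriality statements are comparatively routine once the assignments $\mathsf{Q}_q(\pi,\xi)=\pi$ and $\mathsf{Q}_\infty(\phi,\psi)=\phi$ are in place, the only mild subtlety being to push an almost-everywhere inequality through a pointwise minimum, which the Fubini argument handles.
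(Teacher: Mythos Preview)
Your proof is correct and shares the paper's overall architecture (reduction to $\mathsf{Q}_q$ via duality; the morphism assignments $\mathsf{Q}_q(\pi,\xi)=\pi$ and $\mathsf{Q}_\infty(\phi,\psi)=\phi$; essentially identical functoriality verifications), but the Lipschitz argument takes a genuinely different route. The paper first invokes three auxiliary lemmas: Lemma~\ref{lem:weak_iso_reps} to replace $H,H'$ by weakly isomorphic hypernetworks over a \emph{common} pair of measure spaces, Lemma~\ref{lem:well_defined} to ensure $\mathsf{Q}_q$ respects this replacement, and Lemma~\ref{lem:norm_lower_bound} to pass from an $L^p$-norm to $d_{\mathcal{N},p}$. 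It then controls $\|\omega_{\mathsf{Q}_q}-\omega'_{\mathsf{Q}_q}\|_{L^p}$ via the algebraic identity $\min\{a,b\}=\tfrac12(a+b-|a-b|)$, arriving at a Lipschitz constant of $2$. You bypass all three lemmas by working directly with an arbitrary pair of couplings $(\pi,\xi)$ and bounding $\mathrm{dis}_p^{\mathcal{N}}(\pi)$ for that specific $\pi$; the marginal trick $\omega_{\mathsf{Q}_q}(x_1,x_2)=\|\min_j\omega(x_j,\cdot)\|_{L^q(\xi)}$ is what makes this possible, and the elementary inequality $|\min(a_1,a_2)-\min(b_1,b_2)|\le\max_j|a_j-b_j|$ replaces the paper's decomposition. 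Your approach is more self-contained and yields the sharper constant $2^{1/q}$; the paper's route, on the other hand, isolates Lemma~\ref{lem:weak_iso_reps} as a reusable device (realizing $d_{\mathcal{H},p}$ as a genuine $L^p$-distance over a common space) that has independent interest elsewhere in the paper.
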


The proof will use some technical lemmas. Their proofs will be delayed to Appendix \ref{sec:proofs_clique}.

The first lemma says that Gromov-Wasserstein and co-optimal transport distances can always be realized as $L^p$ distances between functions over a common space, by ``lifting" to a weak isomorphism representative. We remark that the idea of the lemma is implicitly implemented in \cite{chowdhury2020gromov} in the setting of GW distance for the purpose of computation of Fr\'{e}chet means of ensembles of networks (see also \cite{li2021sketching}).

\begin{lemma}\label{lem:weak_iso_reps}
Let $H,H'$ be measure hypernetworks. There exist measure hypernetworks $\overline{H} = (\overline{X},\overline{\mu},\overline{Y},\overline{\nu},\overline{\omega})$ and $\overline{H'} = (\overline{X},\overline{\mu},\overline{Y},\overline{\nu},\overline{\omega}')$ (i.e., with the same underlying measure spaces, but different hypernetwork functions) such that $\overline{\mu}$ and $\overline{\nu}$ are fully supported, $H$ is weakly isomorphic to $\overline{H}$, $H'$ is weakly isomorphic to $\overline{H'}$, and for all $p \in [1,\infty]$ 
\begin{equation}\label{eqn:weak_iso_reps}
d_{\mathcal{H},p}(H,H') = d_{\mathcal{H},p}(\overline{H},\overline{H'}) = \|\overline{\omega} - \overline{\omega}'\|_{L^p(\overline{\mu} \otimes \overline{\nu})},
\end{equation}
where, in this setting, $\overline{\omega} - \overline{\omega}'$ is understood as the function $X \times Y \to \Rspace$ taking $(x,y)$ to $\overline{\omega}(x,y) - \overline{\omega}'(x,y)$. 

Similarly, for measure networks $N,N'$, there exist measure networks $\overline{N} = (\overline{X},\overline{\mu},\overline{\omega})$ and $\overline{N'} = (\overline{X},\overline{\mu},\overline{\omega}')$ such that $\overline{\mu}$ is fully supported, $N$ is weakly isomorphic to $\overline{N}$, $N'$ is weakly isomorphic to $\overline{N'}$, and for all $p \in [1,\infty]$, 
\[
d_{\mathcal{N},p}(N,N') = d_{\mathcal{N},p}(\overline{N},\overline{N'}) = \|\overline{\omega} - \overline{\omega}'\|_{L^p(\overline{\mu} \otimes \overline{\mu})}.
\]
\end{lemma}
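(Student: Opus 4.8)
The plan is to obtain $\overline{H}$ and $\overline{H'}$ by ``lifting'' both hypernetworks onto the supports of a pair of optimal couplings; the key observation is that couplings live on \emph{products}, so the support of a coupling maps naturally onto both factors at once, which is exactly what is needed to get a common underlying space. Concretely, pick optimal couplings $\pi \in \mathcal{C}(\mu,\mu')$ and $\xi \in \mathcal{C}(\nu,\nu')$, so that $\mathrm{dis}_p^{\mathcal{H}}(\pi,\xi) = d_{\mathcal{H},p}(H,H')$ (existence of such a pair is established in the Appendix, cf.\ Lemma~\ref{lemma:realized}). Set $\overline{X} := \mathrm{supp}(\pi)$, $\overline{\mu} := \pi$, $\overline{Y} := \mathrm{supp}(\xi)$, $\overline{\nu} := \xi$; these are Polish, being closed subspaces of the Polish products $X \times X'$ and $Y \times Y'$, and they carry fully supported Borel probability measures by construction. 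Let $\phi,\phi',\psi,\psi'$ be the coordinate projections $\overline{X}\to X$, $\overline{X}\to X'$, $\overline{Y}\to Y$, $\overline{Y}\to Y'$, and define $\overline{\omega} := \omega\circ(\phi\times\psi)$ and $\overline{\omega}' := \omega'\circ(\phi'\times\psi')$ on $\overline{X}\times\overline{Y}$. Since $\omega,\omega'$ are bounded, measurable, and non-negative, so are $\overline{\omega},\overline{\omega}'$, so $\overline{H} := (\overline{X},\overline{\mu},\overline{Y},\overline{\nu},\overline{\omega})$ and $\overline{H'} := (\overline{X},\overline{\mu},\overline{Y},\overline{\nu},\overline{\omega}')$ are bona fide measure hypernetworks sharing the required underlying measure spaces.

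Next I would verify that $(\phi,\psi)$ is a basic weak isomorphism from $\overline{H}$ to $H$ and $(\phi',\psi')$ one from $\overline{H'}$ to $H'$, which by Proposition~\ref{prop:weak_isomorphism} yields the claimed weak isomorphisms. The four projections are measure-preserving: $\pi$ is concentrated on $\overline{X}=\mathrm{supp}(\pi)$, so pushing $\overline{\mu}=\pi$ forward along $\phi$ just reproduces the first marginal $\mu$, and similarly for $\phi',\psi,\psi'$. The function identities $\overline{\omega}=\omega\circ(\phi\times\psi)$ and $\overline{\omega}'=\omega'\circ(\phi'\times\psi')$ hold by definition (everywhere, hence a fortiori almost everywhere). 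Therefore $d_{\mathcal{H},p}(\overline{H},H)=d_{\mathcal{H},p}(\overline{H'},H')=0$, and since $d_{\mathcal{H},p}$ is a pseudometric (Theorem~\ref{thm:pseudometric}), the triangle inequality forces $d_{\mathcal{H},p}(\overline{H},\overline{H'})=d_{\mathcal{H},p}(H,H')$.

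Finally I would establish the $L^p$ identity. Unwinding the definition, for each $p\in[1,\infty]$,
\[
\|\overline{\omega}-\overline{\omega}'\|_{L^p(\overline{\mu}\otimes\overline{\nu})} = \left\| (x,x',y,y')\mapsto \omega(x,y)-\omega'(x',y') \right\|_{L^p(\pi\otimes\xi)} = \mathrm{dis}_p^{\mathcal{H}}(\pi,\xi) = d_{\mathcal{H},p}(H,H'),
\]
where the first equality simply re-expresses a point $\overline{x}\in\overline{X}$ as a pair $(x,x')$ and uses that integration over $\overline{X}\times\overline{Y}=\mathrm{supp}(\pi)\times\mathrm{supp}(\xi)$ agrees with integration over $(X\times X')\times(Y\times Y')$ because the complements are $\pi\otimes\xi$-null, and the last equality is optimality of $(\pi,\xi)$. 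Combined with the previous paragraph this gives all the equalities in \eqref{eqn:weak_iso_reps}. (As a sanity check one can also see $d_{\mathcal{H},p}(\overline{H},\overline{H'})\le \|\overline{\omega}-\overline{\omega}'\|_{L^p(\overline{\mu}\otimes\overline{\nu})}$ directly by testing with the diagonal couplings $(\mathrm{id}_{\overline{X}}\times\mathrm{id}_{\overline{X}})_\#\overline{\mu}$ and $(\mathrm{id}_{\overline{Y}}\times\mathrm{id}_{\overline{Y}})_\#\overline{\nu}$.) The measure-network statement is the special case $Y=X$, $\nu=\mu$, $\xi=\pi$, with the identical argument run with a single coupling.

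I expect no conceptual obstacle here; the only points needing care are (i) that the coordinate projections stay measure-preserving after their domains are cut down to the supports of $\pi$ and $\xi$, which relies on a Borel probability measure on a Polish space being concentrated on its support together with the full-support conventions built into Definition~\ref{def:hyprenetwork}, and (ii) that the $L^p$ integrals over these supports coincide with the integrals over the ambient product spaces, since the complements are null. Everything else is bookkeeping.
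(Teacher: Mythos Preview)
Your proposal is correct and follows essentially the same approach as the paper: both pick optimal couplings $(\pi,\xi)$, set $\overline{X}=\mathrm{supp}(\pi)$, $\overline{Y}=\mathrm{supp}(\xi)$ with measures $\pi,\xi$, pull back $\omega,\omega'$ along the coordinate projections, and then read off the $L^p$ identity from the distortion of the optimal pair. Your write-up is slightly more explicit about Polish-ness of the supports and about invoking the triangle inequality for the first equality in \eqref{eqn:weak_iso_reps}, but the argument is the same.
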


\begin{lemma}\label{lem:well_defined}
If $H$ and $H'$ are weakly isomorphic measure hypernetworks, then $\mathsf{Q}_q(H)$ and $\mathsf{Q}_q(H')$ are weakly isomorphic measure networks for all $q \in [1,\infty]$.
\end{lemma}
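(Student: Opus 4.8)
The plan is to use the characterization of weak isomorphism from Proposition \ref{prop:weak_isomorphism}: since $H$ and $H'$ are weakly isomorphic, there exists a measure hypernetwork $\overline{H} = (\overline{X},\overline{\mu},\overline{Y},\overline{\nu},\overline{\omega})$ together with basic weak isomorphisms $(\phi,\psi)$ from $\overline{H}$ to $H$ and $(\phi',\psi')$ from $\overline{H}$ to $H'$. The idea is to show that $\mathsf{Q}_q$ sends a basic weak isomorphism of hypernetworks to a basic weak isomorphism of networks (in the measure-network sense of the remark following \autoref{sec:networks}), so that $\mathsf{Q}_q(\overline{H})$ will serve as the common ``lift'' witnessing weak isomorphism of $\mathsf{Q}_q(H)$ and $\mathsf{Q}_q(H')$ via $d_{\mathcal{N},p}$.

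First I would unwind what $\mathsf{Q}_q$ does to a basic weak isomorphism $(\phi,\psi):\overline{H}\to H$. On objects, $\mathsf{Q}_q(\overline{H}) = (\overline{X},\overline{\mu},\omega_{\mathsf{Q}_q})$ and $\mathsf{Q}_q(H) = (X,\mu,\omega'_{\mathsf{Q}_q})$ where $\omega_{\mathsf{Q}_q}(x_1,x_2) = \|\min_{j}\overline{\omega}(x_j,\cdot)\|_{L^q(\overline{\nu})}$ and similarly for $H$. Since $\phi$ is measure-preserving, it is the candidate network morphism; I must check that $\omega_{\mathsf{Q}_q}(x_1,x_2) = \omega'_{\mathsf{Q}_q}(\phi(x_1),\phi(x_2))$ for $\overline{\mu}\otimes\overline{\mu}$-almost every $(x_1,x_2)$. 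The key computation is a change-of-variables: because $\psi_\#\overline{\nu} = \nu$ and $\overline{\omega}(x,y) = \omega(\phi(x),\psi(y))$ for $\overline{\mu}\otimes\overline{\nu}$-a.e.\ $(x,y)$, one gets, for $\overline{\mu}$-a.e.\ fixed $x_1,x_2$,
\[
\left\|\min_{j}\overline{\omega}(x_j,\cdot)\right\|_{L^q(\overline{\nu})}
= \left\|\min_{j}\omega(\phi(x_j),\psi(\cdot))\right\|_{L^q(\overline{\nu})}
= \left\|\min_{j}\omega(\phi(x_j),\cdot)\right\|_{L^q(\nu)},
\]
the last step being the pushforward formula $\psi_\#\overline{\nu}=\nu$. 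Some care is needed with the ``$\mu\otimes\nu$-a.e.'' hypothesis: the equality $\overline{\omega}(x,y)=\omega(\phi(x),\psi(y))$ holding for $\overline{\mu}\otimes\overline{\nu}$-a.e.\ pair means, by Fubini, that for $\overline{\mu}$-a.e.\ $x$ the identity $\overline{\omega}(x,\cdot)=\omega(\phi(x),\psi(\cdot))$ holds $\overline{\nu}$-a.e.; intersecting the two full-measure sets of such $x$ for $x_1$ and $x_2$ gives a full $\overline{\mu}\otimes\overline{\mu}$-measure set of pairs where the displayed chain is valid. This shows $(\phi,\mathrm{id}_{\overline{Y}}\text{-irrelevant})$—i.e.\ just $\phi$—is a measure-preserving map realizing weak isomorphism of networks $\mathsf{Q}_q(\overline{H})$ and $\mathsf{Q}_q(H)$.

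Applying the same argument to $(\phi',\psi'):\overline{H}\to H'$ yields that $\mathsf{Q}_q(\overline{H})$ and $\mathsf{Q}_q(H')$ are weakly isomorphic, and composing (weak isomorphism of measure networks is an equivalence relation, by the pseudometric property of $d_{\mathcal{N},p}$ from \cite{ChowdhuryMemoli2019}) gives that $\mathsf{Q}_q(H)$ and $\mathsf{Q}_q(H')$ are weakly isomorphic. The line-graph case follows immediately since $\mathsf{L}_q(H) = \mathsf{Q}_q(H_\ast)$ and dualization preserves weak isomorphism (Proposition \ref{prop:dualization} shows it is an isometry of $d_{\mathcal{H},p}$, hence sends weakly isomorphic hypernetworks to weakly isomorphic ones). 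I expect the main obstacle to be the bookkeeping around the almost-everywhere qualifiers—specifically, arguing cleanly that the pointwise-in-$x$ equality of the functions $\overline{\omega}(x,\cdot)$ and $\omega(\phi(x),\psi(\cdot))$ in $L^q(\overline{\nu})$ can be upgraded to the required $\overline{\mu}\otimes\overline{\mu}$-a.e.\ statement about the expanded functions; a possible subtlety is whether $\min$ of two $\overline{\nu}$-a.e.-defined functions behaves well, but since $\min$ is continuous this causes no trouble.
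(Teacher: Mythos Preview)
Your proposal is correct and follows essentially the same approach as the paper's proof: reduce via Proposition~\ref{prop:weak_isomorphism} to showing that a basic weak isomorphism $(\phi,\psi)$ of hypernetworks induces, through $\phi$ alone, a basic weak isomorphism of the $q$-clique expansions, and verify this by the change-of-variables formula using $\psi_\#\overline{\nu}=\nu$. Your treatment of the almost-everywhere bookkeeping via Fubini is in fact more explicit than the paper's, which defers to an argument ``similar to the one used to prove functoriality in Theorem~\ref{thm:p_clique}.''
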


\begin{lemma}[\cite{dghlp-focm}, Theorem 5.1(d)]\label{lem:norm_lower_bound}
Let $N = (X,\mu,\omega), N' = (X,\mu,\omega')$ be measure networks over the same measure space. Then, for $p \in [1,\infty]$,
    $
    d_{\mathcal{N},p}(N,N') \leq \|\omega - \omega'\|_{L^p(\mu \otimes \mu)}.
    $
    Here, we are considering $\omega - \omega'$ as the function $X \times X \to \Rspace$ defined by $(x,x') \mapsto \omega(x,x') - \omega'(x,x')$.
\end{lemma}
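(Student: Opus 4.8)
The plan is to exhibit a single, explicit coupling whose distortion equals the right-hand side; since $d_{\mathcal{N},p}$ is an infimum over all of $\mathcal{C}(\mu,\mu)$, producing one admissible coupling that attains the value $\|\omega-\omega'\|_{L^p(\mu\otimes\mu)}$ immediately yields the claimed upper bound. The natural candidate, available precisely because $N$ and $N'$ share the same underlying space $(X,\mu)$, is the diagonal coupling $\pi := (\mathrm{id}_X \times \mathrm{id}_X)_\# \mu$ (the identity morphism $\mathbbm{1}_N$ of the earlier subsection), i.e. the pushforward of $\mu$ along $\Delta:X \to X \times X$, $\Delta(x) = (x,x)$. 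First I would verify that $\pi \in \mathcal{C}(\mu,\mu)$: both coordinate projections satisfy $p_X \circ \Delta = \mathrm{id}_X$ and analogously for the second factor, so $(p_X)_\# \pi = \mu$ and the other marginal is $\mu$ as well, confirming admissibility.

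Next I would compute $\mathrm{dis}_p^{\mathcal{N}}(\pi)$ for $p < \infty$. The key observation is that $\pi$ is supported on the diagonal $\{(x,x) : x \in X\}$, so $\pi \otimes \pi$ is supported on $\{(x,x,y,y)\}$; concretely, the change-of-variables formula for pushforwards gives $\int_{X \times X} g(x,x')\,\pi(dx \times dx') = \int_X g(x,x)\,\mu(dx)$ for any $\pi$-integrable $g$. Applying this to the defining double integral of the distortion, whose integrand is $|\omega(x,y) - \omega'(x',y')|^p$ integrated against $\pi(dx \times dx')\,\pi(dy \times dy')$, collapses $x' \mapsto x$ and $y' \mapsto y$ and yields
\[
\mathrm{dis}_p^{\mathcal{N}}(\pi)^p = \int_X \int_X |\omega(x,y) - \omega'(x,y)|^p\,\mu(dx)\,\mu(dy) = \|\omega - \omega'\|_{L^p(\mu \otimes \mu)}^p.
\]
Taking the infimum over couplings then gives $d_{\mathcal{N},p}(N,N') \le \mathrm{dis}_p^{\mathcal{N}}(\pi) = \|\omega - \omega'\|_{L^p(\mu \otimes \mu)}$, as required.

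For $p = \infty$ the same coupling works, with essential suprema replacing integrals. Here I would invoke the fact that for a pushforward measure $\Delta_\# \mu$ and a measurable $h$ on $X \times X$ one has $\|h\|_{L^\infty(\Delta_\# \mu)} = \mathrm{ess\,sup}_{x}\,|h(x,x)|$, the essential supremum being taken with respect to $\mu$; applying this in both the $(x,x')$ and $(y,y')$ slots identifies $\mathrm{dis}_\infty^{\mathcal{N}}(\pi)$ with $\|\omega - \omega'\|_{L^\infty(\mu \otimes \mu)}$, and the infimum bound follows exactly as before.

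The only genuinely delicate point is justifying the reduction of the $\pi \otimes \pi$-integral (respectively essential supremum) to a $\mu \otimes \mu$-integral; this is a routine consequence of the support of the diagonal coupling together with the change-of-variables formula. In fact there is essentially no hard step, since the shared measure space removes the usual need to search for a favorable coupling: the diagonal is forced to work, and the inequality is strict rather than an equality precisely because a nondiagonal coupling could, in principle, do better.
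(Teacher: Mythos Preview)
Your argument is correct and is precisely the standard diagonal-coupling proof underlying the cited result: the paper itself does not spell out the details but simply refers to \cite[Theorem 5.1(d)]{dghlp-focm} and remarks that the proof carries over to measure networks. Your write-up supplies exactly those details, so there is nothing to add.
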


\begin{proof}[Proof of Theorem \ref{thm:p_clique}]
Since properties of the clique expansion transfer to those of the line graph via dualization (Proposition \ref{prop:dualization}), we will focus on the map $\mathsf{Q}_q$.

\para{Lipschitz.} First, we will prove that $\mathsf{Q}_q$ is Lipschitz when $1\leq q \leq p \leq \infty$. We will proceed by establishing some estimates, then combining them at the end to get the desired inequality.

Let $f:X \times X \times Y \to \Rspace$ be the function $f(x_1,x_2,y) = \min_j \omega(x_j,y)$ and, likewise, let $f'(x_1,x_2,y) = \min_j \omega'(x_j,y)$. We claim that
\begin{equation}\label{eqn:lipschitz_1}
    \|\omega_{\mathsf{Q}_q} - \omega_{\mathsf{Q}_q}'\|_{L^p(\mu \otimes \mu)} \leq \|f - f'\|_{L^p(\mu \otimes \mu \otimes \nu)}.
\end{equation}
Indeed, when $p < \infty$, we have 
\begin{align}
    &\|\omega_{\mathsf{Q}_q} - \omega'_{\mathsf{Q}_q}\|_{L^p(\mu \otimes \mu)}^p  \nonumber\\
    &\quad = \int_X \int_X \lvert \|\min_j \{\omega(x_j,\cdot)\}\|_{L^q(\nu)} - \|\min_j \{\omega'(x_j,\cdot)\}\|_{L^q(\nu)} \rvert^p \; \mu(dx_1)\mu(dx_2) \nonumber \\
    &\quad \leq \int_X \int_X \|\min_j \{\omega(x_j,\cdot)\} - \min_j \{\omega'(x_j,\cdot)\} \|_{L^q(\nu)}^p \; \mu(dx_1) \mu(dx_2) \label{eqn:lipschitz_2} \\
    &\quad \leq \int_X \int_X \|\min_j \{\omega(x_j,\cdot)\} - \min_j \{\omega'(x_j,\cdot)\} \|_{L^p(\nu)}^p \; \mu(dx_1) \mu(dx_2) \label{eqn:lipschitz_3} \\
    &\quad = \int_X \int_X \int_Y \lvert \min_j \{\omega(x_j,y)\} - \min_j \{\omega'(x_j,y)\} \rvert^p \; \nu(dy) \mu(dx_1)\mu(dx_2) \nonumber \\
    &\quad = \|f - f'\|_{L^p(\mu \otimes \mu \otimes \nu)}^p, \nonumber
\end{align}
where \eqref{eqn:lipschitz_2} follows by the reverse triangle inequality form of Minkowski's inequality, \eqref{eqn:lipschitz_3} follows from H\"{o}lder's inequality. The $p=\infty$ case follows by a limiting argument.

Now define three new functions:
\begin{align*}
    f_j:X \times X \times Y \to \Rspace &: (x_1,x_2,y) \mapsto \omega(x_j,y), \quad  \mbox{for $j \in \{1,2\}$, and} \\
    g:X \times X \times Y \to \Rspace &: (x_1,x_2,y) \mapsto \lvert \omega(x_1,y) - \omega(x_2,y) \rvert.
\end{align*}
Analogously define $f_1',f_2',g'$ by replacing all instances of $\omega$ with $\omega'$. Our next goal is to establish the following:
\begin{align}
    \|f_j - f_j'\|_{L^p(\mu \otimes \mu \otimes \nu)} &= \| \omega - \omega'\|_{L^p(\mu \otimes \nu)} \label{lipschitz_4} \\
    \|g' - g\|_{L^p(\mu \otimes \mu \otimes \nu)} &\leq 2\| \omega - \omega'\|_{L^p(\mu \otimes \nu)}. \label{lipschitz_6}
\end{align}
The equality \eqref{lipschitz_4} (for, say, $j=1$) holds because the functions $f_1$ and $f_1'$ do not depend on $x_2$, so we marginalize over the second copy of $\mu$. To see \eqref{lipschitz_6} for $p < \infty$,
\begin{align}
    &\| g' - g\|_{L^p(\mu \otimes \mu \otimes \nu)}^p \nonumber \\
    &\quad = \int_Y \int_X \int_X \lvert \lvert \omega'(x_1,y) - \omega'(x_2,y) \rvert - \lvert \omega(x_1,y) - \omega(x_2,y) \rvert \rvert^p  \mu(dx_1)\mu(dx_2)\nu(dy) \nonumber \\
    &\quad\leq \int_Y \int_X \int_X \lvert (\omega'(x_1,y) - \omega(x_1,y)) + (\omega(x_2,y) - \omega'(x_2,y)) \rvert^p  \mu(dx_1)\mu(dx_2)\nu(dy),
\end{align}
where we applied the reverse triangle inequality $\lvert \lvert a \rvert  - \lvert b \rvert \rvert \leq \lvert a - b \rvert$ for real numbers $a,b$ and rearranged terms. Applying the triangle inequality for the $L^p$ norm and then marginalizing, we obtain
\begin{align}
&\| g' - g\|_{L^p(\mu \otimes \mu \otimes \nu)} \nonumber \\
    &\quad\leq \left(\int_Y \int_X \lvert \omega'(x_1,y) - \omega(x_1,y) \rvert^p \mu(x_1)\nu(dy) \right)^{1/p} \nonumber \\
    &\qquad \qquad \qquad \qquad + \left(\int_Y \int_X \lvert \omega(x_2,y) - \omega'(x_2,y) \rvert^p \mu(x_2)\nu(dy) \right)^{1/p} \nonumber \\
    &\quad= \| \omega - \omega'\|_{L^p(\mu \otimes \nu)} + \| \omega - \omega'\|_{L^p(\mu \otimes \nu)} \label{eqn:lipschitz_9}.
\end{align}
The $p=\infty$ case follows similarly.

Recall the general formula $\min\{a,b\} = \frac{1}{2}(a + b - \lvert a - b \rvert)$ for real numbers $a$ and $b$. By the definitions of our functions, we therefore have $f = \frac{1}{2}(f_1 + f_2  - g)$ and $f' = \frac{1}{2}(f_1' + f_2' - g')$. We are now prepared to complete the proof:
\begin{align}
    &d_{\mathcal{N},p}(\mathsf{Q}_q(H),\mathsf{Q}_q(H')) \nonumber \\
    &\quad \leq \|\omega_{\mathsf{Q}_q} - \omega_{\mathsf{Q}_q}'\|_{L^p(\mu \otimes \mu)} \label{eqn:lipschitz_10} \\
    &\quad  \leq \|f - f'\|_{L^p(\mu \otimes \mu \otimes \nu)} \label{eqn:lipschitz_11} \\
    &\quad = \frac{1}{2}\|f_1 + f_2 - g - f_1' - f_2' + g'\|_{L^p(\mu \otimes \mu \otimes \nu)} \label{eqn:lipschitz_12} \\
    &\quad \leq \frac{1}{2}\left(\|f_1 - f_1'\|_{L^p(\mu \otimes \mu \otimes \nu)} + \|f_2 - f_2'\|_{L^p(\mu \otimes \mu \otimes \nu)} + \|g' - g\|_{L^p(\mu \otimes \mu \otimes \nu)}\right) \label{eqn:lipschitz_13} \\
    &\quad \leq 2 \| \omega - \omega'\|_{L^p(\mu \otimes \nu)} \label{eqn:lipschitz_14} \\
    &\quad = 2 d_{\mathcal{H},p}(H,H'), \nonumber
\end{align}
where the steps are justified as follows. Inequality \eqref{eqn:lipschitz_10} is Lemma \ref{lem:norm_lower_bound}, \eqref{eqn:lipschitz_11} comes from our estimate \eqref{eqn:lipschitz_1}, \eqref{eqn:lipschitz_12} follows from the formula for $\min\{a,b\}$ discussed above, \eqref{eqn:lipschitz_13} is the triangle inequality for the $L^p$ norm, and \eqref{eqn:lipschitz_14} follows from our estimates \eqref{lipschitz_4} and \eqref{lipschitz_6}. This shows that $\mathsf{Q}_q$ is 2-Lipschitz.

\para{Functorial.} We now demonstrate functoriality, beginning with expansive maps in the $q=\infty$ case. Let $(\phi,\psi)$ be an expansive map from $H = (X,\mu,Y,\nu,\omega)$ to $H' = (X',\mu',Y',\nu',\omega')$. We will show that $\phi:X \to X'$ defines an expansive map from $\mathsf{Q}_\infty(N)$ to $\mathsf{Q}_\infty(N')$. Indeed,
\begin{align*}
\omega_{\mathsf{Q}_\infty}(x_1,x_2) &= \| \min_j \omega(x_j,\cdot) \|_{L^\infty(\nu)} \leq \| \min_j \omega(\phi(x_j),\psi(\cdot)) \|_{L^\infty(\nu)} \\
&\leq \| \min_j \omega(\phi(x_j),\cdot) \|_{L^\infty(\nu')} = \omega_{\mathsf{Q}_\infty}'(\phi(x_1),\phi(x_2)),
\end{align*}
where the second inequality follows because the essential supremum over $\nu'$ is greater than or equal to the essential supremum over the pushforward of $\nu$ by $\psi$, which may not have full support. 

Finally, we show functoriality for $q \in [1,\infty]$ with respect to expansive couplings. Assume that $H,H'$ are compact measure hypernetworks and let $\pi \in \mathcal{C}(\mu,\mu')$ and $\xi \in \mathcal{C}(\nu,\nu')$ be expansive couplings of $H$ and $H'$. We will show that $\pi$ is an expansive coupling for $\mathsf{Q}_q(H)$ and $\mathsf{Q}_q(H')$; that is, $\mathsf{Q}_q(\pi,\xi) = \pi$. Before doing so, we easily observe that $\mathsf{Q}_q$ respects compositions, due to its forgetful structure: for $\pi' \in \mathcal{C}(\mu',\mu'')$ and $\xi' \in \mathcal{C}(\nu',\nu'')$, 
\[
\mathsf{Q}_q(\pi' \bullet \pi, \xi' \bullet \xi)  = \pi' \bullet \pi = \mathsf{Q}_q(\pi',\xi') \bullet \mathsf{Q}_q(\pi,\xi).
\]

For $\pi \otimes \pi \otimes \xi$-almost every $(x_1,x_1',x_2,x_2',y,y')$, it holds that $\omega(x_j,y) \leq \omega'(x_j',y')$ for both $j = 1,2$, and it follows that $\min_j \omega(x_j,y) \leq \min_j \omega'(x_j',y')$ holds $\pi \otimes \pi \otimes \xi$-almost everywhere. Therefore, for $\pi \otimes \pi$-almost every pair $(x_1,x_1')$ and $(x_2,x_2')$, 
\begin{align}
\omega_{\mathsf{Q}_q}(x_1,x_2) &= \| \min_j \omega(x_j,\cdot) \|_{L^q(\nu)} = \| \min_j \omega(x_j,\cdot) \|_{L^q(\xi)} \label{eqn:functorial_0} \\
    &\leq \| \min_j \omega'(x_j',\cdot) \|_{L^q(\xi)} = \| \min_j \omega'(x_j',\cdot) \|_{L^q(\nu)} = \omega_{\mathsf{Q}_q}'(x_1',x_2'). \nonumber 
\end{align}
In \eqref{eqn:functorial_0}, we consider the argument of the norm on the right as a function $Y \times Y' \to \Rspace$ given by $(y,y') \mapsto \min_j \omega(x_j,y)$. In particular, this function does not depend on $y'$, so the second equality follows by marginalizing. The remaining equalities follow by similar arguments, and this completes the proof.
\end{proof}

\begin{example}(Matrix Formulation of Graphifications)\label{ex:matrix_formulation}
Let $(X,Y)$ be a (combinatorial) hypergraph. In the finite setting, we can represent the binary incidence function as a matrix $\omega$ of size $\lvert X \rvert \times \lvert Y \rvert$, valued in $\{0,1\}$. One may observe that the $1$-clique expansion (respectively, $1$-line graph) considered above is (up to a constant factor, when all node and hyperedge probability distributions are uniform) equal to the matrix product $\omega \omega^T$ (respectively, $\omega^T \omega$), with the result being interpreted as the weighted adjacency matrix of a graph. Here, superscript $T$ denotes matrix transpose. For these maps to be well-defined in general on weak isomorphism classes (as in Lemma \ref{lem:well_defined}), one must involve probability measures. For example, we define the \emph{matrix product line graph map} to be $H = (X,\mu,Y,\nu,\omega) \mapsto \mathsf{L}_{\mathrm{mp}}(H) := (Y,\nu,\omega^T \mathrm{diag}(\mu) \omega)$, where $\mathrm{diag}(\mu)$ is the diagonal matrix with entries coming from $\mu$. 

The matrix formulations described above are computationally convenient, and have some nice properties. For example, one can check that the matrix product line graph map is indeed well-defined on weak isomorphism classes of finite measure hypernetworks. Moreover, it is functorial with respect to expansive couplings. However, the matrix formulations are no longer Lipschitz when extended to general hypernetwork functions. This is illustrated by the following example.

For $\alpha \geq 1$, set $H_\alpha = (X,\mu,Y,\nu,\omega_\alpha)$, where $\lvert X \rvert = \lvert Y \rvert = 2$, $\mu,\nu$ are uniform and $\omega_\alpha$ is described by the matrix $\omega_\alpha = \mathrm{diag}(\alpha,\alpha)$. The matrix product line graph map takes $H_\alpha$ to $\mathsf{L}_{\mathrm{mp}}(H_\alpha) = (Y,\nu,\mathrm{diag}(\alpha^2/2,\alpha^2/2))$. Now consider $d_{\mathcal{H},p}(H_\alpha,H_1)$ and $d_{\mathcal{N},p}(\mathsf{L}_{\mathrm{mp}}(H_\alpha),\mathsf{L}_{\mathrm{mp}}(H_1))$ for $\alpha \geq 1$ and $p \in [1,\infty)$.  One can show that $d_{\mathcal{H},p}(H_\alpha,H_1)$ is realized by optimal couplings $\pi = \xi = \mathrm{diag}(1/2,1/2)$ and that this coupling is also optimal for $d_{\mathcal{N},p}(\mathsf{L}_{\mathrm{mp}}(H_\alpha),\mathsf{L}_{\mathrm{mp}}(H_1))$. It follows by direct computation that 
\[
d_{\mathcal{H},p}(H_\alpha,H_1) = \frac{1}{2^{1/p}}\left(\alpha - 1\right)  \mbox{ and }  d_{\mathcal{N},p}(\mathsf{L}_{\mathrm{mp}}(H_\alpha),\mathsf{L}_{\mathrm{mp}}(H_1)) = \frac{1}{2^{1/p}}\left(\frac{\alpha^2}{2} - \frac{1}{2}\right).
\]
Taking $\alpha$ arbitrarily large shows that the matrix product line graph map $\mathsf{L}_{\mathrm{mp}}:\mathcal{H} \to \mathcal{N}$ is not Lipschitz. 

This example shows that one must take care when designing graphifications in order to ensure the desired properties.
\end{example}

We end this section with a structural result on graphifications showing (roughly) that $\mathsf{Q}_\infty$ is minimal among graphifications satisfying mild assumptions. A graphification $\mathsf{F}:\mathcal{H} \to \mathcal{N}$ is called:
\begin{itemize}
    \item a \emph{node-type functor} if it is of the form $\mathsf{F}(X,\mu,Y,\nu,\omega) = (X,\mu,\omega_\mathsf{F})$ and is functorial with respect to expansive maps, i.e., $\mathsf{F}:\mathcal{H}_\mathrm{m} \to \mathcal{N}_\mathrm{m}$;
    \item \emph{non-trivial} if there exists $H \in \mathcal{H}$ such that the network function of $\mathsf{F}(H)$ is not identically zero on a full measure set;
    \item \emph{well-defined on weak isomorphism classes} if whenever $H$ and $H'$ are weakly isomorphic as measure hypernetworks, $\mathsf{F}(H)$ and $\mathsf{F}(H')$ are weakly isomorphic as measure networks; and 
    \item \emph{scale-equivariant} if it satisfies the following condition for all $H = (X,\mu,Y,\nu,\omega)$. Let $\beta \geq 0$ and define $\beta \cdot H = (X,\mu,Y,\nu,\beta \cdot \omega)$; i.e., a rescaling of the hypernetwork function of $H$ by $\beta$. Similarly, for $N \in \mathcal{N}$, let $\beta \cdot N$ denote a rescaling of the network function of $N$ by $\beta$. Scale-equivariance then requires $\mathsf{F}(\beta \cdot H) = \beta \cdot \mathsf{F}(H)$.
\end{itemize}

\begin{proposition}\label{prop:structural_functor}
If $\mathsf{F}:\mathcal{H}_\mathrm{m} \to \mathcal{N}_\mathrm{m}$ is a node-type functor which is non-trivial, well-defined on weak isomorphism classes and scale-equivariant, then there exists a constant $c > 0$ such that for all $H = (X,\mu,Y,\nu,\omega) \in \mathcal{H}$ and for every $x_1,x_2 \in X$,
\begin{equation}\label{eqn:structural_bound}
\omega_\mathsf{F}(x_1,x_2) \geq c \cdot \omega_{\mathsf{Q}_\infty}(x_1,x_2).
\end{equation}
\end{proposition}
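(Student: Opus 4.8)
The plan is to propagate bounds along expansive maps, using a single ``unit'' hypernetwork as a pivot. Let $K$ be the measure hypernetwork with two nodes $p,q$, one hyperedge $e$, uniform measures, and hypernetwork function $\omega_K\equiv 1$; note $\omega_{\mathsf{Q}_\infty}(p,q)=1$ for $K$. We use that a node-type functor, being the identity on underlying node data, acts on an expansive map $(\phi,\psi)\colon H\to H'$ by the node map $\phi$, viewed as an expansive map $\mathsf{F}(H)\to\mathsf{F}(H')$ (exactly as $\mathsf{Q}_\infty$ does in the proof of~\autoref{thm:p_clique}); consequently, writing $\omega_\mathsf{F}$ and $\omega_\mathsf{F}'$ for the network functions of $\mathsf{F}(H)$ and $\mathsf{F}(H')$,
\[
(\star)\qquad \omega_\mathsf{F}(x_1,x_2)\ \le\ \omega_\mathsf{F}'(\phi(x_1),\phi(x_2))\qquad\text{for all }x_1,x_2\in X.
\]

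The first step is to show that the network function $\omega_\mathsf{F}^K$ of $\mathsf{F}(K)$ is a constant $c\ge 0$ on $\{p,q\}\times\{p,q\}$. Collapsing the two nodes of $K$ onto one gives a basic weak isomorphism from $K$ to the one-node, one-hyperedge hypernetwork $K'$ with $\omega_{K'}\equiv 1$, so $K$ and $K'$ are weakly isomorphic (apply~\autoref{prop:weak_isomorphism} with $\overline H=K$). Since $\mathsf{F}$ is well-defined on weak isomorphism classes, $\mathsf{F}(K)$ is weakly isomorphic to the one-point network $\mathsf{F}(K')$; and by the characterization of weak isomorphism of measure networks recalled in~\autoref{sec:networks}, a measure network is weakly isomorphic to a one-point network precisely when its network function is $\mu\otimes\mu$-almost everywhere constant. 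As the uniform measure on $\{p,q\}$ is fully supported, this forces $\omega_\mathsf{F}^K\equiv c$ for some $c\ge 0$. To see $c>0$: by non-triviality there is $H_0=(X_0,\mu_0,Y_0,\nu_0,\omega_0)$ and a point $(a_1,a_2)$ with $\omega_\mathsf{F}^{H_0}(a_1,a_2)>0$; fixing $B>0$ with $\omega_0\le B$ pointwise (possible since $\omega_0$ is bounded), the pair of constant maps $X_0\to\{p\}$, $Y_0\to\{e\}$ is an expansive map $H_0\to B\cdot K$, so $(\star)$ gives $0<\omega_\mathsf{F}^{H_0}(a_1,a_2)\le\omega_\mathsf{F}^{B\cdot K}(p,p)$, which equals $B\cdot c$ by scale-equivariance and the constancy just proved; hence $c>0$.

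With $c$ fixed, the bound follows quickly. Let $H=(X,\mu,Y,\nu,\omega)$ and $x_1,x_2\in X$, and put $s=\omega_{\mathsf{Q}_\infty}(x_1,x_2)=\bigl\lVert\min_{j=1,2}\omega(x_j,\cdot)\bigr\rVert_{L^\infty(\nu)}$; if $s=0$ there is nothing to prove, so assume $s>0$. For $\varepsilon\in(0,s)$ the set $\{y\in Y:\min_j\omega(x_j,y)>s-\varepsilon\}$ has positive $\nu$-measure, hence is nonempty; choose $y_\varepsilon$ in it. Then $p\mapsto x_1$, $q\mapsto x_2$, $e\mapsto y_\varepsilon$ defines an expansive map $(s-\varepsilon)\cdot K\to H$, since $(s-\varepsilon)\,\omega_K(p,e)=s-\varepsilon<\omega(x_1,y_\varepsilon)$ and likewise for $q$. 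By $(\star)$, scale-equivariance, and constancy of $\omega_\mathsf{F}^K$, we get $\omega_\mathsf{F}(x_1,x_2)\ge\omega_\mathsf{F}^{(s-\varepsilon)\cdot K}(p,q)=(s-\varepsilon)\,c$, and letting $\varepsilon\to 0$ yields $\omega_\mathsf{F}(x_1,x_2)\ge c\,s=c\,\omega_{\mathsf{Q}_\infty}(x_1,x_2)$. The case $x_1=x_2$ is identical, applying $(\star)$ to the pair $(p,p)$.

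I expect the first step to be the main obstacle, and in particular the role of the weak-isomorphism hypothesis. Without it the statement is false: the graphification retaining only the diagonal of $\mathsf{Q}_\infty$ (i.e.\ $H\mapsto(X,\mu,\tilde\omega)$ with $\tilde\omega(a,a)=\omega_{\mathsf{Q}_\infty}(a,a)$ and $\tilde\omega(a,b)=0$ for $a\ne b$) is a node-type functor that is non-trivial and scale-equivariant but vanishes off the diagonal, contradicting~\eqref{eqn:structural_bound} on $K$. The key realization is that weak-isomorphism invariance, together with the fact that $K$ is weakly isomorphic to a point, rigidifies the network function of $\mathsf{F}(K)$ to a single constant; everything after that is bookkeeping with expansive maps and rescalings.
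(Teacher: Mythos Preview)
Your proof is correct and follows essentially the same strategy as the paper's: both use a small pivot hypernetwork (the paper's one-node $H_\alpha$ and two-node $\overline{H}_\beta$ play the roles of your $K'$ and $K$), invoke the weak-isomorphism hypothesis to force the network function of $\mathsf{F}$ on the two-node pivot to be constant, extract $c>0$ by mapping a non-trivial $H$ expansively into a rescaled pivot, and obtain the lower bound by mapping a rescaled pivot expansively into $H$. The only cosmetic difference is that the paper takes the supremum over all $y\in Y$ directly (which dominates the essential supremum), whereas you use an $\varepsilon$-argument on the essential supremum.
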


\begin{remark}
The Proposition can be stated in a more categorical language. First, various conditions on $\mathsf{F}$ can be stated diagramatically. Let $\mathcal{M}$ denote the category whose objects are pairs $(X,\mu)$ consisting of a Polish space $X$ and a Borel probability measure $\mu$, whose morphisms are functions between underlying sets. Let $\mathsf{M}_\mathcal{N}:\mathcal{N}_\mathrm{m} \to \mathcal{M}$ be the forgetful functor $(X,\mu,\omega) \mapsto (X,\mu)$; likewise, let $\mathsf{M}_\mathcal{H}:\mathcal{H}_\mathrm{m} \to \mathcal{M}$ be the forgetful functor $(X,\mu,Y,\nu,\omega) \mapsto (X,\mu)$ (taking an expansive map $(\phi,\psi)$ to the function $\phi$). The property of being a node-type functor is that the following diagram commutes.
\[\begin{CD} 
\mathcal{H}_\mathrm{m} @>\mathsf{F}>> \mathcal{N}_\mathrm{m}
\\ @V \mathsf{M}_\mathcal{H} VV @VV \mathsf{M}_\mathcal{N} V 
\\ \mathcal{M} @>=>> \mathcal{M}
\end{CD}\]
The property of being well-defined on weak isomorphism classes says that $\mathsf{F}$ is a functor between categories with weak equivalences; see \autoref{sec:distortion_zero_couplings_as_weak_equivalences} below. To restate scale-equivariance more categorically, we introduce the rescaling functor $\mathsf{T}_\mathcal{N}^\beta:\mathcal{N}_\mathrm{m} \to \mathcal{N}_\mathrm{m}$, $\beta \geq 0$, defined on objects by $\mathsf{T}_\mathcal{N}^\beta(N) = \beta \cdot N$ (as above) and defined to be the identity on morphisms. We similarly define a rescaling functor for hypernetworks $\mathsf{T}_\mathcal{H}^\beta: \mathcal{H}_\mathrm{m} \to \mathcal{H}_\mathrm{m}$. Then the scale equivariance property of $\mathsf{F}$ says that the following diagram commutes for all $\beta \geq 0$.
\[\begin{CD} 
\mathcal{H}_\mathrm{m} @>\mathsf{F}>> \mathcal{N}_\mathrm{m}
\\ @V \mathsf{T}_\mathcal{H}^\beta VV @VV \mathsf{T}_\mathcal{N}^\beta V 
\\ \mathcal{H}_\mathrm{m} @>\mathsf{F}>> \mathcal{N}_\mathrm{m}
\end{CD}\]
The nontriviality of $\mathsf{F}$ says that $\mathsf{F}$ does not factor as $\mathsf{T}_\mathcal{N}^0 \circ \mathsf{G}$ for some functor $\mathsf{G}$. Finally, condition \eqref{eqn:structural_bound} says that there is a natural transformation $\mathsf{T}^c_\mathcal{N} \circ \mathsf{Q}_\infty \Rightarrow \mathsf{F}$ whose components are identity morphisms.
\end{remark}

\begin{proof}[Proof of Proposition \ref{prop:structural_functor}]
For $\alpha \geq 0$, let $H_\alpha$ be the measure hypernetwork with one node $x^\alpha$, one hyperedge $y^\alpha$ and with hypernetwork function $\omega_\alpha(x^\alpha,y^\alpha) = \alpha$. Let $(\omega_1)_\mathsf{F}(x^1,x^1) = c$ for some $c \geq 0$. Our first goal is to show that $c > 0$.

By scale equivariance, we have $(\omega_\alpha)_\mathsf{F}(x^\alpha,x^\alpha) = c \cdot \alpha$. Now let $H = (X,\mu,Y,\nu,\omega) \in \mathcal{H}$ be arbitrary. Since $\omega$ is assumed to be bounded, there exists $\alpha \geq 0$ such that $\omega(x,y) < \alpha$ for every $x,y$. The unique functions $X \to \{x^\alpha\}$ and $Y \to \{y^\alpha\}$ define an expansive map from $H$ to $H_\alpha$, so functoriality implies that there is an expansive map from $\mathsf{F}(H)$ to $\mathsf{F}(H_\alpha)$. In particular, $\omega_\mathsf{F}(x_1,x_2) \leq (\omega_\alpha)_\mathsf{F}(x^\alpha,x^\alpha) = c \cdot \alpha$ for all $x_1,x_2 \in X$. It follows that $c > 0$, by non-triviality of $\mathsf{F}$.

Now let $H$ be an arbitrary measure hypernetwork, let $x_1,x_2 \in X$, $y \in Y$, and set $\beta = \min_j \omega(x_j,y)$. Let $\overline{H}_\beta$ denote the measure hypernetwork with nodes $\{x_1^\beta,x_2^\beta\}$, hyperedge set $\{y^\beta\}$, measures uniform and $\overline{\omega}_\beta(x^\beta_j,y^\beta) = \beta$ for $j = 1,2$. Observe that $\overline{H}_\beta$ is weakly isomorphic to $H_\beta$. There is an expansive map from $\overline{H}_\beta$ to $H$ given by $x_j^\beta \mapsto x_j$ and $y^\beta \mapsto y$. This induces an expansive map from $\mathsf{F}(\overline{H}_\beta)$ to $\mathsf{F}(H)$ so that for every choice of $x_1,x_2$ we have
\[
\omega_\mathsf{F}(x_1,x_2) \geq (\overline{\omega}_\beta)_\mathsf{F}(x_1^\beta,x_2^\beta) = (\omega_\beta)_\mathsf{F}(x^\beta,x^\beta) = c \cdot \beta = c \cdot \min_j \omega(x_j,y).
\]
Since this holds for any $y \in Y$, we have established \eqref{eqn:structural_bound}.
\end{proof}

Since $q$-clique expansions are non-trivial, well-defined on weak isomorphism classes and scale-equivariant, and since it is possible to construct $H$ containing nodes $x_1,x_2$ such that $\omega_{\mathsf{Q}_q}(x_1,x_2) < \omega_{\mathsf{Q}_\infty}(x_1,x_2)$, with an arbitrarily large gap, we have the following immediate corollary.

\begin{corollary}\label{cor:clique_not_functorial}
If $q < \infty$, then $\mathsf{Q}_q$ is not functorial with respect to expansive maps.
\end{corollary}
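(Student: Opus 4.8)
The plan is to argue by contradiction, reducing everything to the structural bound of Proposition \ref{prop:structural_functor}. Suppose that for some fixed $q \in [1,\infty)$ the $q$-clique expansion is functorial with respect to expansive maps, i.e.\ $\mathsf{Q}_q:\mathcal{H}_\mathrm{m} \to \mathcal{N}_\mathrm{m}$ is a functor. Since $\mathsf{Q}_q$ has by construction the shape $(X,\mu,Y,\nu,\omega) \mapsto (X,\mu,\omega_{\mathsf{Q}_q})$, this assumption is precisely what upgrades it to a \emph{node-type functor} in the sense defined above. I would then check off the three remaining hypotheses of Proposition \ref{prop:structural_functor}: non-triviality holds because the one-node, one-hyperedge hypernetwork with $\omega \equiv 1$ has $\omega_{\mathsf{Q}_q} \equiv 1$; well-definedness on weak isomorphism classes is exactly Lemma \ref{lem:well_defined}; and scale-equivariance is immediate, since both the pointwise minimum and the $L^q(\nu)$-norm are positively homogeneous, so $\omega_{\beta\cdot\mathsf{Q}_q} = \beta\,\omega_{\mathsf{Q}_q}$ for $\beta \geq 0$. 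Proposition \ref{prop:structural_functor} then produces a constant $c > 0$ with $\omega_{\mathsf{Q}_q}(x_1,x_2) \geq c\cdot \omega_{\mathsf{Q}_\infty}(x_1,x_2)$ for every $H = (X,\mu,Y,\nu,\omega) \in \mathcal{H}$ and all $x_1,x_2 \in X$.

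The second and final step is to exhibit a family of hypernetworks violating this lower bound, i.e.\ one where the ratio $\omega_{\mathsf{Q}_q}/\omega_{\mathsf{Q}_\infty}$ is forced to zero. For each integer $m \geq 1$, let $H^{(m)}$ have node set $X = \{x_1,x_2\}$, hyperedge set $Y = \{y_1,\dots,y_m\}$, uniform measures $\mu$ and $\nu$, and incidence-type hypernetwork function $\omega(x_i,y_1) = 1$ for $i=1,2$ and $\omega(x_i,y_j) = 0$ for $j \geq 2$. Then $\min_{j=1,2}\omega(x_j,\cdot)$ is the indicator of $\{y_1\}$, so $\omega_{\mathsf{Q}_q}(x_1,x_2) = \nu(\{y_1\})^{1/q} = (1/m)^{1/q}$, while $\omega_{\mathsf{Q}_\infty}(x_1,x_2) = 1$. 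The structural inequality would then force $(1/m)^{1/q} \geq c$ for all $m$, which fails as soon as $m > c^{-q}$. This contradiction shows that no $q < \infty$ can make $\mathsf{Q}_q$ functorial with respect to expansive maps; the line graph case follows by dualization (Proposition \ref{prop:dualization}).

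I do not anticipate a genuine obstacle here — the result is advertised as an immediate corollary, and every ingredient is already available. The only point deserving a moment's care is making sure the contradiction is correctly localized: one should confirm that, of the conditions making up "node-type functor," functoriality with respect to expansive maps is exactly the one \emph{not} automatically satisfied by the definition of $\mathsf{Q}_q$, so that it is this assumption, and not some hidden gap, that is being refuted. A minor variant would be to phrase the gap construction via the constant hypernetwork function with a growing rescaling, but the incidence-function family above is cleaner because it pins $\omega_{\mathsf{Q}_\infty}$ at $1$ and makes the decay $(1/m)^{1/q} \to 0$ transparent.
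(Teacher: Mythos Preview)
Your proposal is correct and follows essentially the same route as the paper: assume functoriality, verify the remaining hypotheses of Proposition~\ref{prop:structural_functor} (non-triviality, well-definedness via Lemma~\ref{lem:well_defined}, scale-equivariance), and then contradict the resulting lower bound by exhibiting hypernetworks with an arbitrarily large gap between $\omega_{\mathsf{Q}_q}$ and $\omega_{\mathsf{Q}_\infty}$. The paper merely asserts that such a gap can be constructed, whereas you supply an explicit family $H^{(m)}$ doing the job; your argument is thus a fleshed-out version of the paper's one-sentence justification.
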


\section{Further Category-Theoretic Results}

This section collects some additional results on the categories of measure networks and hypernetworks, some of which involve more technical category-theoretic terminology and fall outside the discussion of graphification above. Our main reference for this terminology is \cite{borceux1994handbook}.

\subsection{Categorical and Metric Isomorphisms}\label{sec:isomorphisms}

Our categorical constructions for (hyper)networks raise a natural question: do isomorphisms in our categories correspond to metric notions of isomorphism introduced in Section \ref{sec:theory}? We address this question in this subsection.

Borrowing terminology from metric geometry, we say that two measure measure networks $N,N'$ are \emph{isometric} if there exists a (not necessarily measure-preserving) bijection $\phi:X \to X'$ such that $\omega'(\phi(x),\phi(y)) = \omega(x,y)$ for every $x,y \in X$. We call such a map an \emph{isometry}. Similarly, two measure hypernetworks $H,H'$ are \emph{isometric} if there exist $\phi:X \to X'$ and $\psi:Y \to Y'$ such that $\omega'(\phi(x),\psi(y)) = \omega(x,y)$ for all $x \in X$ and $y \in Y$.

In a category $\mathcal{N}_\mathrm{m}$ or $\mathcal{N}_\mathrm{c}$, we say that two objects $N,N'$ are \emph{categorically isomorphic} if there exist morphisms $N \to N'$ and $N' \to N$ such that each composition is equal to the identity. We define hypernetworks $H,H'$ in $\mathcal{H}_\mathrm{m}$ or $\mathcal{H}_\mathrm{c}$ to be \emph{categorically isomorphic} in a similar fashion. The following proposition relates categorical isomorphisms with our notions of metric isomorphisms.

\begin{proposition}\label{prop:isomorphism_equivalences}
We have the following equivalences.
    \begin{enumerate}
    \item Measure networks $N,N' \in \mathcal{N}_\mathrm{m}$ are categorically isomorphic if and only if they are isometric. Likewise, measure hypernetworks $H,H' \in \mathcal{H}_\mathrm{m}$ are categorically isomorphic if and only if they are isometric.
    \item Measure networks $N,N' \in \mathcal{N}_\mathrm{c}$ are categorically isomorphic if and only if they are strongly isomorphic. In particular, any categorical isomorphism $\pi$ from $N$ to $N'$ is of the form $\pi = (\mathrm{id}_X \times \phi)_\# \mu$ for some strong isomorphism $\phi$. Likewise, any categorical isomorphism $(\pi,\xi)$ from $H$ to $H'$ in $\mathcal{H}_\mathrm{c}$ is induced by a strong isomorphism $(\phi,\psi)$.
    \end{enumerate}
\end{proposition}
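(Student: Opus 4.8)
This is a direct unwinding of the definition of an expansive map, and I would prove both directions simultaneously, and in parallel for $\Nmap$ and $\mathcal{H}_\mathrm{m}$. If $\phi$ (resp. a pair $(\phi,\psi)$) is an isometry, then it is an expansive map because equality of the relational functions implies the required inequality, and its set-theoretic inverse is again expansive for the same reason; since composition and identities in $\Nmap$ and $\mathcal{H}_\mathrm{m}$ are just function composition and identity maps, $\phi$ and $\phi^{-1}$ witness a categorical isomorphism. Conversely, if expansive maps $\phi\colon N\to N'$ and $\psi\colon N'\to N$ compose to identity maps, then $\phi$ is a bijection with inverse $\psi$, and chaining the two expansiveness inequalities, $\omega(x,y)\le\omega'(\phi(x),\phi(y))$ and $\omega'(\phi(x),\phi(y))\le\omega(\psi\phi(x),\psi\phi(y))=\omega(x,y)$, forces $\omega(x,y)=\omega'(\phi(x),\phi(y))$ for \emph{all} $x,y$, so $\phi$ is an isometry. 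The hypernetwork case is identical with pairs.

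\textbf{Part 2, the easy direction.} If $\phi$ is a strong isomorphism $N\to N'$, then $\phi$ and $\phi^{-1}$ are expansive measure-preserving maps between compact measure networks, i.e. morphisms in $\mathcal{N}_{\mathrm{mp}}$, with $\phi^{-1}\circ\phi=\mathrm{id}_N$ and $\phi\circ\phi^{-1}=\mathrm{id}_{N'}$. Applying the faithful functor $\mathsf{P}$ of Proposition~\ref{prop:embedding_graphs}, the induced couplings $(\mathrm{id}_X\times\phi)_\#\mu$ and $(\mathrm{id}_{X'}\times\phi^{-1})_\#\mu'$ compose under $\bullet$ to $\mathbbm{1}_N$ and $\mathbbm{1}_{N'}$, so they exhibit a categorical isomorphism $N\cong N'$ in $\mathcal{N}_\mathrm{c}$ which is moreover of the claimed induced form. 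The hypernetwork statement follows by the same argument applied to both the node and hyperedge maps.

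\textbf{Part 2, the hard direction.} Suppose $\pi\colon N\to N'$ and $\rho\colon N'\to N$ are expansive couplings with $\rho\bullet\pi=\mathbbm{1}_N$ and $\pi\bullet\rho=\mathbbm{1}_{N'}$. The crux is a \emph{determinism forcing} step: I would first show that $\rho\bullet\pi=\mathbbm{1}_N$ implies $\pi=(\psi\times\mathrm{id}_{X'})_\#\mu'$ for a $\mu'$-almost-everywhere defined measurable $\psi\colon X'\to X$ (and likewise $\rho=(\mathrm{id}_{X'}\times\psi)_\#\mu'$). Indeed, the gluing $\rho\boxtimes\pi$ on $X\times X'\times X$ has first-and-third marginal $\rho\bullet\pi=\mathbbm{1}_N$, which is concentrated on the diagonal, so $\rho\boxtimes\pi$ is concentrated on $\{(x,x',x)\}$; by the explicit gluing formula of Lemma~\ref{lem:gluing_lemma} this means that for $\mu'$-almost every $x'$ the product $\pi_{x'}\otimes\rho_{x'}$ of the two disintegration kernels over $X'$ sits on the diagonal of $X\times X$, which forces $\pi_{x'}=\rho_{x'}=\delta_{\psi(x')}$. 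Running the symmetric argument with $\pi\bullet\rho=\mathbbm{1}_{N'}$ gives $\pi=(\mathrm{id}_X\times\phi)_\#\mu$ for a $\mu$-almost-everywhere defined $\phi$. Comparing the two descriptions of $\pi$ and taking marginals shows $\phi$ and $\psi$ are measure-preserving with $\psi\circ\phi=\mathrm{id}_X$ $\mu$-a.e. and $\phi\circ\psi=\mathrm{id}_{X'}$ $\mu'$-a.e. Finally, since $\pi$ is now carried by the graph of $\phi$, expansiveness of $\pi$ reads $\omega(x,y)\le\omega'(\phi(x),\phi(y))$ for $\mu\otimes\mu$-a.e. $(x,y)$, and expansiveness of $\rho$ gives the reverse inequality after substituting $\phi(x),\phi(y)$ and using $\psi\circ\phi=\mathrm{id}$ a.e.; hence $\omega=\omega'\circ(\phi\times\phi)$ holds $\mu\otimes\mu$-a.e.

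\textbf{Finishing the proof and the main obstacle.} It remains to promote these almost-everywhere statements to a genuine strong isomorphism and to confirm $\pi$ is exactly the induced coupling. I would invoke the standard fact that a pair of almost-everywhere mutually inverse, measure-preserving Borel maps between Polish probability spaces restricts to an honest measure isomorphism between full-measure Borel subsets, and then use compactness of $X,X'$ and full support of $\mu,\mu'$ — together with the regularity of the network functions that makes the expansiveness inequalities hold on supports — to show that $\mathrm{supp}(\pi)$ is in fact the graph of a homeomorphism $\phi\colon X\to X'$; pointwise-definedness of $\phi$ then upgrades the supportwise expansive (in)equalities of $\pi$ and $\rho$ to $\omega(x,y)=\omega'(\phi(x),\phi(y))$ for \emph{all} $x,y$, and $\pi=(\mathrm{id}_X\times\phi)_\#\mu$. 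This last promotion — reconciling the measure-theoretic ``a.e.'' conclusions with the pointwise ``for all $x,y$'' required of a strong isomorphism — is the step I expect to be most delicate, and is where compactness is genuinely needed. For hypernetworks one applies the determinism-forcing step separately to the node coupling $\pi$ and the hyperedge coupling $\xi$ (the two compositions decouple because $\mathbbm{1}_H$ has diagonal components on $X$ and on $Y$), then repeats the expansiveness argument with the mixed product $\pi\otimes\xi$ to obtain $\omega=\omega'\circ(\phi\times\psi)$, and finishes as above.
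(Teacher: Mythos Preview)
Your Part 1 and the easy direction of Part 2 match the paper's approach essentially verbatim; the paper even dismisses the easy direction as ``straightforward.''

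For the hard direction of Part 2 you take a genuinely different route. You argue via disintegration kernels: since $\rho\bullet\pi=\mathbbm{1}_N$ forces the gluing $\rho\boxtimes\pi$ onto $\{x=x''\}$, the product $\pi_{x'}\otimes\rho_{x'}$ is diagonal for $\mu'$-a.e.\ $x'$, hence each $\pi_{x'}$ is a Dirac. The paper instead works directly with \emph{supports}, using \lref{lem:supp1}: from the formula for $\pi'\boxtimes\pi$ one sees that $(x,x')\in\mathrm{supp}(\pi)$ and $(x',y)\in\mathrm{supp}(\pi')$ imply $(x,x',y)\in\mathrm{supp}(\pi'\boxtimes\pi)$, and chasing this through $\pi\bullet\pi'=\mathbbm{1}_{N'}$ shows that two points $(x,x'),(x,y')\in\mathrm{supp}(\pi)$ must have $x'=y'$. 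Thus $\mathrm{supp}(\pi)$ is the graph of a function $\phi$, defined on \emph{all} of $X$ (by full support and \lref{lem:supp1}(2)), and since supports are closed and $X'$ is compact, $\phi$ is automatically continuous. The paper then invokes faithfulness of $\mathsf{P}$ (Proposition~\ref{prop:embedding_graphs}) to get $\phi'\circ\phi=\mathrm{id}_X$ exactly. What the paper's approach buys is that the bijection $\phi$ is globally defined from the outset, so the promotion step you rightly flag as delicate --- passing from an a.e.-defined map to a genuine bijection --- simply does not arise.

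One caution about your proposed resolution of the promotion step: you appeal to ``regularity of the network functions that makes the expansiveness inequalities hold on supports,'' but network functions in this paper are only assumed measurable and bounded, not continuous, so this step does not go through as you describe. The paper's support-based argument handles the map $\phi$ cleanly, and then finishes $\omega$-preservation by the same sandwich as in \eqref{eqn:isometry_equivalences}; it does this by treating expansiveness as a condition on support pairs (as it did earlier in the proof of Proposition~\ref{prop:composition_preserves_expansiveness}), rather than via any regularity hypothesis. If you want to stick with your disintegration route, the cleanest fix is to observe after the fact that your a.e.-defined $\phi$ has $\mathrm{supp}(\pi)$ equal to the closure of its graph, and then run the paper's support argument from there.
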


\begin{proof}
We prove the statements in the setting of measure networks, with the proofs for measure hypernetworks following by similar arguments.

For Part 1, suppose that $N \xrightarrow{\phi} N'$ and $N' \xrightarrow{\phi'} N$ are categorical isomorphisms with $\phi' \circ \phi = \mathrm{id}_X$ and $\phi \circ \phi' = \mathrm{id}_{X'}$. Then $\phi' = \phi^{-1}$ at a set level, and $\phi$ must be a bijection. Moreover, for every $x,y \in X$, 
\begin{equation}\label{eqn:isometry_equivalences}
\omega(x,y) \leq \omega'(\phi(x),\phi(y)) \leq \omega(\phi'(\phi(x)),\phi'(\phi (y)))=\omega(x,y),
\end{equation}
so that all inequalities are forced to be equalities. This shows that $\phi$ is an isometry, and the converse statement is trivial.

To prove Part 2, first suppose that $N \xrightarrow{\pi} N'$ and $N' \xrightarrow{\pi'} N$ are categorical isomorphisms with $\pi' \bullet \pi = \mathbbm{1}_N$ and $\pi \bullet \pi' = \mathbbm{1}_{N'}$. We first observe that
\begin{equation}\label{eqn:support_condition}
(x,x') \in \mathrm{supp}(\pi) \mbox{ and } (x',y) \in \mathrm{supp}(\pi') \quad \mbox{implies} \quad (x,x',y) \in \mathrm{supp}(\pi' \boxtimes \pi);
\end{equation}
this follows by the defining equation 
$
\pi' \boxtimes \pi (dx \times dx' \times dy) = \pi'_{x'}(dx) \pi_{x'}(dy) \mu'(dx').
$
From this, we deduce that if $(x,x') \in \mathrm{supp}(\pi)$, then $(x',x) \in \mathrm{supp}(\pi')$. Indeed, \eqref{eqn:support_condition} applied to $(x,x'),(x',x)$ tells us that $(x,x',x) \in \mathrm{supp}(\pi' \boxtimes \pi)$, so that $(x',x) \in \mathrm{supp}(\pi')$, by Part 1 of Lemma \ref{lem:supp1}.

Next, we show that $\pi$ is supported on the graph of a measure-preserving map. To this end, suppose that $(x,x'),(x,y') \in \mathrm{supp}(\pi)$. Then $(x',x) \in \mathrm{supp}(\pi')$ and an application of \eqref{eqn:support_condition} (with the roles of $\pi'$ and $\pi$ reversed) tells us that $(x',y,y') \in \mathrm{supp}(\pi \boxtimes \pi')$, hence that $(x',y') \in \mathrm{supp}(\pi \bullet \pi')$. Since $\pi \bullet \pi' = \mathbbm{1}_{N'}$, it follows that $x' = y'$, so that $\pi$ is supported on a graph of some measure preserving map $\phi:X \to X'$. Applying the above arguments to $\pi'$, we deduce that $\pi' = (\mathrm{id}_{X'} \times \phi')_\# \mu'$ for a measure-preserving expansive map $\phi':X' \to X$. It then follows from Proposition \ref{prop:embedding_graphs} that $\phi' = \phi^{-1}$; indeed, using the functor $\mathsf{P}$ from the proposition, we have 
\[
\mathsf{P}(\mathrm{id}_X) = \mathbbm{1}_N = \pi' \bullet \pi = \mathsf{P}(\phi' \circ \phi),
\]
so faithfulness implies $\phi' \circ \phi = \mathrm{id_X}$, and we similarly have $\phi \circ \phi' = \mathrm{id}_{X'}$. In particular, $\phi$ is a measure-preserving bijection with measure-preserving inverse. Finally, $\phi$ must preserve $\omega$ by an argument similar to the one used in \eqref{eqn:isometry_equivalences}. The converse statement for Part 2 is straightforward, so this completes the proof.
\end{proof}

\subsection{Distortion Zero Couplings as Weak Equivalences}\label{sec:distortion_zero_couplings_as_weak_equivalences}

The notion of weak isomorphism of measure (hyper)networks also has a natural categorical interpretation. Recall that a \emph{category with weak equivalences} is a pair $(\mathcal{C},\mathcal{W})$ consisting of a category $\mathcal{C}$ and a subcategory $\mathcal{W}$ such that $\mathcal{W}$ contains all (categorical) isomorphisms and satisfies the \emph{two-out-of-three property}, meaning that for any morphisms $f,g$ and $ g \circ f$ in $\mathcal{C}$, if two members of the set $\{f,g,g \circ f\}$ are morphisms of $\mathcal{W}$, then the third must be as well. Morphisms of $\mathcal{W}$ are called \emph{weak equivalences}.

Let $\mathcal{N}_{\mathrm{w}}$ denote the wide subcategory of $\mathcal{N}_{\mathrm{c}}$ whose morphisms are expansive couplings $\pi$ with $\mathrm{dis}_\infty^\mathcal{N}(\pi) = 0$. This does define a subcategory, since any two morphisms $\pi \in \mathcal{C}(\mu,\mu')$ and $\pi' \in \mathcal{C}(\mu',\mu'')$ with zero distortion must compose to a coupling with zero distortion---see Remark \ref{rmk:gluing_triangle_inequality} in the Appendix. Similarly, let $\mathcal{H}_\mathrm{w}$ be the wide subcategory of $\mathcal{H}_\mathrm{c}$ whose morphisms $(\pi,\xi)$ satisfy $\mathrm{dis}_\infty^\mathcal{H}(\pi,\xi) = 0$.

\begin{proposition}\label{prop:weak_equivalences}
The pairs $(\mathcal{N}_\mathrm{c},\mathcal{N}_\mathrm{w})$ and $(\mathcal{H}_\mathrm{c},\mathcal{H}_\mathrm{w})$ are categories with weak equivalences.
\end{proposition}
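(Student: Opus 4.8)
The plan is to verify the three defining properties of a category with weak equivalences for the pair $(\mathcal{N}_\mathrm{c},\mathcal{N}_\mathrm{w})$, and then to observe that $(\mathcal{H}_\mathrm{c},\mathcal{H}_\mathrm{w})$ is handled by an entirely parallel argument. That $\mathcal{N}_\mathrm{w}$ is a wide subcategory is essentially recorded already: the diagonal coupling $\mathbbm 1_N$ is supported on $\{((x,x),(y,y))\}$, where $\omega-\omega$ vanishes, so $\mathrm{dis}^{\mathcal N}_\infty(\mathbbm 1_N)=0$, and closure under composition will drop out of the triangle inequality established below (the content of Remark~\ref{rmk:gluing_triangle_inequality}). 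So the two points requiring work are (a) that $\mathcal{N}_\mathrm{w}$ contains every categorical isomorphism of $\mathcal{N}_\mathrm{c}$, and (b) the two-out-of-three property.

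For (a) I would invoke Proposition~\ref{prop:isomorphism_equivalences}(2): any categorical isomorphism $\pi$ from $N$ to $N'$ in $\mathcal{N}_\mathrm{c}$ is of the form $\pi=(\mathrm{id}_X\times\phi)_\#\mu$ for a strong isomorphism $\phi$. Then $\mathrm{supp}(\pi\otimes\pi)$ lies in the graph set $\{((x,\phi(x)),(y,\phi(y)))\}$, on which $\omega(x,y)-\omega'(\phi(x),\phi(y))=0$ since a strong isomorphism preserves network functions; hence $\mathrm{dis}^{\mathcal N}_\infty(\pi)=0$, i.e. $\pi\in\mathcal{N}_\mathrm{w}$. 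The hypernetwork version uses the corresponding clause of Proposition~\ref{prop:isomorphism_equivalences}.

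For (b), the key device is a ``triple gluing.'' Given $\pi\in\mathcal C(\mu,\mu')$ and $\pi'\in\mathcal C(\mu',\mu'')$, apply Lemma~\ref{lem:gluing_lemma} to form $\Pi:=\pi'\boxtimes\pi$ on $X\times X'\times X''$; its coordinate marginals are $(p_{X\times X'})_\#\Pi=\pi$, $(p_{X'\times X''})_\#\Pi=\pi'$, and $(p_{X\times X''})_\#\Pi=\pi'\bullet\pi$, the last being the definition of $\bullet$. On $(X\times X'\times X'')^2$ define $G_1=\omega(x,y)-\omega'(x',y')$, $G_2=\omega'(x',y')-\omega''(x'',y'')$, $G_3=\omega(x,y)-\omega''(x'',y'')$, so that $G_3=G_1+G_2$ pointwise. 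Using the identity $\|f\circ g\|_{L^\infty(\rho)}=\|f\|_{L^\infty(g_\#\rho)}$ together with the marginal formulas above (and the fact that the pushforward of a product measure along a product map is the product of the pushforwards), one obtains $\|G_1\|_{L^\infty(\Pi\otimes\Pi)}=\mathrm{dis}^{\mathcal N}_\infty(\pi)$, $\|G_2\|_{L^\infty(\Pi\otimes\Pi)}=\mathrm{dis}^{\mathcal N}_\infty(\pi')$, and $\|G_3\|_{L^\infty(\Pi\otimes\Pi)}=\mathrm{dis}^{\mathcal N}_\infty(\pi'\bullet\pi)$. Applying the triangle inequality for $\|\cdot\|_{L^\infty(\Pi\otimes\Pi)}$ to $G_3=G_1+G_2$, to $G_1=G_3-G_2$, and to $G_2=G_3-G_1$ yields all three ``generalized triangle inequalities'' among $\mathrm{dis}^{\mathcal N}_\infty(\pi)$, $\mathrm{dis}^{\mathcal N}_\infty(\pi')$, $\mathrm{dis}^{\mathcal N}_\infty(\pi'\bullet\pi)$; in particular, if any two of these vanish, so does the third, which is exactly two-out-of-three (and the first of the three inequalities is the composition-closure statement used above).

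For $(\mathcal{H}_\mathrm{c},\mathcal{H}_\mathrm{w})$ I would repeat this with two independent gluings, $\Pi=\pi'\boxtimes\pi$ on the $X$'s and $\Xi=\xi'\boxtimes\xi$ on the $Y$'s, working with $\Pi\otimes\Xi$; its relevant marginals are $\pi\otimes\xi$, $\pi'\otimes\xi'$, and $(\pi'\bullet\pi)\otimes(\xi'\bullet\xi)$, and the analogues of $G_1,G_2,G_3$ have $L^\infty(\Pi\otimes\Xi)$-norms equal to $\mathrm{dis}^{\mathcal H}_\infty(\pi,\xi)$, $\mathrm{dis}^{\mathcal H}_\infty(\pi',\xi')$, $\mathrm{dis}^{\mathcal H}_\infty(\pi'\bullet\pi,\xi'\bullet\xi)$, after which the same triangle-inequality argument applies. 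I expect the main obstacle to be purely bookkeeping: carefully verifying the pushforward/marginal identities, and in particular confirming that the appropriate coordinate projection of $\Pi\otimes\Pi$ (resp.\ $\Pi\otimes\Xi$) is indeed $(\pi'\bullet\pi)\otimes(\pi'\bullet\pi)$ (resp.\ $(\pi'\bullet\pi)\otimes(\xi'\bullet\xi)$). Once that is pinned down, the rest is formal.
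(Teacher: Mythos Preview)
Your proof is correct, and your handling of the two-out-of-three property is genuinely different from the paper's. Both proofs treat part (a) identically, via Proposition~\ref{prop:isomorphism_equivalences}. For part (b), the paper also uses the gluing $\Pi=\pi'\boxtimes\pi$ and the triangle inequality $\mathrm{dis}^{\mathcal N}_\infty(\pi'\bullet\pi)\leq\mathrm{dis}^{\mathcal N}_\infty(\pi)+\mathrm{dis}^{\mathcal N}_\infty(\pi')$ to handle the case where $\pi,\pi'\in\mathcal N_\mathrm{w}$, but for the remaining two cases it switches to a pointwise support-chasing argument: given $(x',x''),(y',y'')\in\mathrm{supp}(\pi')$, it invokes Lemma~\ref{lem:supp1} (which requires compactness) to produce $x,y$ with $(x,x')\in\mathrm{supp}(\pi)$ and $(x,x'')\in\mathrm{supp}(\pi'\bullet\pi)$, and then reads off $\omega'(x',y')=\omega(x,y)=\omega''(x'',y'')$ from the two zero-distortion assumptions. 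Your approach instead stays at the level of norms: having lifted all three distortions to $L^\infty(\Pi\otimes\Pi)$-norms of $G_1,G_2,G_3$ with $G_3=G_1+G_2$, you apply the triangle inequality in all three rearrangements simultaneously. This is cleaner and more uniform---all three cases fall out at once, and you never touch supports or Lemma~\ref{lem:supp1}---whereas the paper's argument is more hands-on but treats the cases asymmetrically.
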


\begin{proof}
We only include details for $(\mathcal{N}_\mathrm{c},\mathcal{N}_\mathrm{w})$, as the measure hypernetworks case is conceptually the same, but requires more complicated notation. 

By Proposition \ref{prop:isomorphism_equivalences}, $\mathcal{N}_\mathrm{w}$ contains all (categorical) isomorphisms: a categorical isomorphism  is induced by a strong isomorphism, and therefore has zero distortion. It remains to show that the two-out-of-three property holds. Let $\pi \in \mathcal{C}(\mu,\mu')$ and $\pi' \in \mathcal{C}(\mu',\mu'')$. If $\pi$ and $\pi'$ are morphisms of $\mathcal{N}_\mathrm{w}$, then so is $\pi' \bullet \pi$, by virtue of the inequality
\[
\mathrm{dis}_\infty^\mathcal{N}(\pi' \bullet \pi) \leq \mathrm{dis}_\infty^{\mathcal{N}}(\pi') + \mathrm{dis}_\infty^\mathcal{N}(\pi)
\]
(see Remark \ref{rmk:gluing_triangle_inequality}). Suppose that $\pi$ and $\pi' \bullet \pi$ are morphisms of $\mathcal{N}_\mathrm{w}$. We wish to show that $\pi'$ has zero distortion, which is to say that
$
\omega(x',y') = \omega(x'',y'')
$
for all $(x',x''), (y',y'') \in \mathrm{supp}(\pi')$. Assuming $(x',x'') \in \mathrm{supp}(\pi')$, there exists $x \in X$ such that $(x,x',x'') \in \mathrm{supp}(\pi' \boxtimes \pi)$, by Lemma \ref{lem:supp1}, Part 2. It follows that $(x,x') \in \mathrm{supp}(\pi)$ and $(x,x'') \in \mathrm{supp}(\pi' \bullet \pi)$, by Part 1 of Lemma \ref{lem:supp1}. We can likewise choose $y \in Y$ such that $(y,y') \in \mathrm{supp}(\pi)$ and $(y,y'') \in \mathrm{supp}(\pi' \bullet \pi)$. By our assumption that $\pi$ and $\pi' \bullet \pi$ both have zero distortion, it follows that
\[
\omega(x',y') = \omega(x,y) = \omega(x'',y'').
\]
The remaining case ($\pi'$ and $\pi' \bullet \pi$ have zero distortion implies $\pi$ does as well) follows similarly.
\end{proof}

Let $(\mathcal{C},\mathcal{W})$ and $(\mathcal{C}',\mathcal{W}')$ be categories with weak equivalences. We say that a functor $\mathsf{F}:\mathcal{C} \to \mathcal{C}'$ \emph{respects weak equivalences} if for any morphism $\phi$ of $\mathcal{W}$, $\mathsf{F}(\phi)$ is a morphism of $\mathcal{W}'$.

\begin{corollary}
The functors $\mathsf{B},\mathsf{Q}_q,\mathsf{L}_q:\mathcal{H}_\mathrm{c} \to \mathcal{N}_\mathrm{c}$ respect weak equivalences $\mathcal{H}_\mathrm{w}$ and $\mathcal{N}_\mathrm{w}$.
\end{corollary}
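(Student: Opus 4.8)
Since each of $\mathsf{B},\mathsf{Q}_q,\mathsf{L}_q$ is already known to be a functor $\mathcal{H}_\mathrm{c}\to\mathcal{N}_\mathrm{c}$ (Proposition~\ref{prop:incidence_map} and Theorem~\ref{thm:p_clique}), the only thing to verify is that each carries a morphism of $\mathcal{H}_\mathrm{w}$ to a morphism of $\mathcal{N}_\mathrm{w}$. So let $(\pi,\xi)$ be an expansive coupling of compact measure hypernetworks $H,H'$ with $\mathrm{dis}_\infty^\mathcal{H}(\pi,\xi)=\|\omega-\omega'\|_{L^\infty(\pi\otimes\xi)}=0$; equivalently, $\omega(x,y)=\omega'(x',y')$ for $\pi\otimes\xi$-almost every $((x,x'),(y,y'))$. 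We must show $\mathrm{dis}_\infty^\mathcal{N}(\mathsf{F}(\pi,\xi))=0$ for $\mathsf{F}\in\{\mathsf{B},\mathsf{Q}_q,\mathsf{L}_q\}$. The $\mathsf{L}_q$ case I would dispose of first by dualization: $\mathsf{L}_q(H)=\mathsf{Q}_q(H_\ast)$, and since $\mathrm{dis}_{H,H',p}^\mathcal{H}(\pi,\xi)=\mathrm{dis}_{H_\ast,H_\ast',p}^\mathcal{H}(\xi,\pi)$ (from the proof of Proposition~\ref{prop:dualization}), dualization preserves the weak-equivalence structure of $\mathcal{H}_\mathrm{c}$, so it is enough to treat $\mathsf{B}$ and $\mathsf{Q}_q$.

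For $\mathsf{Q}_q$, recall from the proof of Theorem~\ref{thm:p_clique} that $\mathsf{Q}_q(\pi,\xi)=\pi$, now viewed as a coupling of $\mathsf{Q}_q(H)$ and $\mathsf{Q}_q(H')$. By Fubini there is a $\pi$-full set $G\subset X\times X'$ such that for $(x,x')\in G$ one has $\omega(x,y)=\omega'(x',y')$ for $\xi$-a.e.\ $(y,y')$. Hence for $\pi\otimes\pi$-almost every $((x_1,x_1'),(x_2,x_2'))$ (namely both coordinates in $G$), $\min_{j}\omega(x_j,y)=\min_{j}\omega'(x_j',y')$ for $\xi$-a.e.\ $(y,y')$. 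Rerunning the computation in \eqref{eqn:functorial_0} with equalities in place of inequalities and marginalizing (the function $(y,y')\mapsto\min_j\omega(x_j,y)$ depends only on $y$, so its $L^q(\xi)$ norm equals $\|\min_j\omega(x_j,\cdot)\|_{L^q(\nu)}$, and similarly on the primed side), we get $\omega_{\mathsf{Q}_q}(x_1,x_2)=\omega'_{\mathsf{Q}_q}(x_1',x_2')$ for $\pi\otimes\pi$-a.e.\ $((x_1,x_1'),(x_2,x_2'))$, i.e.\ $\mathrm{dis}_\infty^\mathcal{N}(\pi)=0$.

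For $\mathsf{B}$, recall $\mathsf{B}(\pi,\xi)=\rho=\tfrac12\big((i_{X\times X'})_\#\pi+(i_{Y\times Y'})_\#\xi\big)$ from \eqref{eqn:incidence_coupling}. As in the Lipschitz part of the proof of Proposition~\ref{prop:incidence_map}, $\rho\otimes\rho$ is a sum of nonnegative measures supported respectively on $(X\times X')^2$, on $(Y\times Y')^2$, and on the two cross products $(X\times X')\times(Y\times Y')$ and $(Y\times Y')\times(X\times X')$; since each summand is absolutely continuous with respect to a pushforward of $\pi\otimes\pi$, $\xi\otimes\xi$, $\pi\otimes\xi$, or $\xi\otimes\pi$ under inclusions, it suffices to check that $\omega_\mathsf{B}-\omega_\mathsf{B}'$ vanishes a.e.\ with respect to each summand. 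On $(X\times X')^2$ and $(Y\times Y')^2$ both $\omega_\mathsf{B}$ and $\omega_\mathsf{B}'$ are identically zero by construction, so this is automatic; on each cross piece, using $\omega_\mathsf{B}(y,x)=\omega_\ast(y,x)=\omega(x,y)$ the condition reads $\omega(x,y)=\omega'(x',y')$, which holds $\pi\otimes\xi$-a.e.\ (resp.\ $\xi\otimes\pi$-a.e.) by hypothesis. Hence $\|\omega_\mathsf{B}-\omega_\mathsf{B}'\|_{L^\infty(\rho\otimes\rho)}=0$.

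The whole argument is short precisely because the nontrivial probabilistic manipulations — the decomposition of $\rho\otimes\rho$ and the marginalization behind \eqref{eqn:functorial_0} — are already available from the cited proofs, and the only new input is upgrading the expansiveness inequality $\le$ to the equality furnished by zero distortion. The one place I would be slightly careful is the reduction "vanishes $\pi\otimes\xi$-a.e.\ $\Rightarrow$ vanishes a.e.\ against each summand of $\rho\otimes\rho$" (and the analogous Fubini step for $\mathsf{Q}_q$), but this is routine absolute-continuity bookkeeping rather than a genuine obstacle.
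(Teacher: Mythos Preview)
Your proof is correct and rests on the same machinery as the paper's, but the paper's own proof is a single sentence: ``This follows since the functors were all shown to be Lipschitz. In particular, the zero distortion condition is preserved.'' That appeal works because the Lipschitz arguments in Proposition~\ref{prop:incidence_map} and Theorem~\ref{thm:p_clique} actually establish coupling-level bounds of the form $\mathrm{dis}^\mathcal{N}_\infty(\mathsf{F}(\pi,\xi)) \leq C\cdot \mathrm{dis}^\mathcal{H}_\infty(\pi,\xi)$ (explicitly for $\mathsf{B}$, and via the $\overline{H},\overline{H'}$ representatives for $\mathsf{Q}_q$), so zero distortion in goes to zero distortion out.

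What you do differently is bypass the Lipschitz statement as a black box and instead rerun the relevant computations (the $\rho\otimes\rho$ decomposition for $\mathsf{B}$, the equality version of \eqref{eqn:functorial_0} for $\mathsf{Q}_q$) directly under the zero-distortion hypothesis. This is more explicit and arguably cleaner, since the paper's one-liner requires the reader to recognize that the Lipschitz proofs yield coupling-level inequalities and not merely distance-level ones---for $\mathsf{Q}_q$ in particular, the Lipschitz proof as written passes through Lemma~\ref{lem:weak_iso_reps} using an \emph{optimal} coupling, so one has to observe that the same construction applies to the given $(\pi,\xi)$. Your direct approach avoids that small unpacking.
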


\begin{proof}
    This follows since the functors were all shown to be Lipschitz. In particular, the zero distortion condition is preserved. 
\end{proof}

\subsection{Limit Constructions}

We end this section by studying the (non)existence of a few standard limit constructions in our categories of (hyper)networks. In particular, we study whether our categories have any of the following so-called \emph{basic limit constructions}: initial object, terminal object, products, coproducts, pullbacks and/or pushouts. 

First, we will show that limit constructions in the category $\mathcal{N}_\mathrm{m}$ of measure networks with expansive maps are closely related to limit constructions in the category of sets. Let $\mathcal{S}$ denote the category of sets and functions and let $\mathsf{F}:\mathcal{N}_\mathrm{m} \to \mathcal{S}$ denote the forgetful functor, taking $N = (X,\mu,\omega)$ to $X$ and an expansive map to its underlying function. For a fixed measure network $N_0 \in \mathcal{N}_\mathrm{m}$, let $\mathsf{H}_{N_0}:\mathcal{N}_\mathrm{m} \to \mathcal{S}$ denote the \emph{hom functor} taking $N$ to $\mathrm{Hom}_\mathrm{m}(N_0,N)$, the set of expansive morphisms from $N_0$ to $N$. For an expansive map $N \xrightarrow[]{\phi} N'$, $\mathsf{H}_{N_0}(\phi): \mathrm{Hom}_\mathrm{m}(N_0,N) \to \mathrm{Hom}_\mathrm{m}(N_0,N')$ is defined by $\mathsf{H}_{N_0}(\phi)(\psi) = \phi \circ \psi$. 

\begin{proposition}\label{prop:representable}
    The forgetful functor $\mathsf{F}:\mathcal{N}_\mathrm{m} \to \mathcal{S}$ is representable. It follows that $\mathsf{F}$ preserves limits.
\end{proposition}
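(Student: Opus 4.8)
The plan is to exhibit an explicit representing object. Take $N_0 = (\{\ast\},\delta_\ast,\omega_0)$, the one‑point measure network, where $\delta_\ast$ is the (necessarily fully supported) Dirac measure on the one‑point Polish space $\{\ast\}$ and $\omega_0\colon\{\ast\}\times\{\ast\}\to\Rspace$ is the zero function, which is an admissible network function since it is measurable, bounded, and non‑negative. I claim that $\mathsf{F}$ is naturally isomorphic to the hom functor $\mathsf{H}_{N_0}=\mathrm{Hom}_\mathrm{m}(N_0,-)$, which is exactly the definition of representability.

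The key observation is that, by the standing sign convention of Definition~\ref{def:measure-network}, the network function $\omega$ of any $N=(X,\mu,\omega)$ takes non‑negative values, so \emph{every} function $\phi\colon\{\ast\}\to X$ is automatically an expansive map $N_0\to N$: the expansiveness inequality is the single condition $\omega_0(\ast,\ast)=0\le\omega(\phi(\ast),\phi(\ast))$, which holds vacuously. Consequently the evaluation map $\mathrm{ev}_N\colon\mathrm{Hom}_\mathrm{m}(N_0,N)\to X$, $\phi\mapsto\phi(\ast)$, is a bijection for every $N$. Naturality is immediate: for an expansive map $N\xrightarrow{\psi}N'$ one has $\mathrm{ev}_{N'}\bigl(\mathsf{H}_{N_0}(\psi)(\phi)\bigr)=\mathrm{ev}_{N'}(\psi\circ\phi)=\psi(\phi(\ast))=\mathsf{F}(\psi)\bigl(\mathrm{ev}_N(\phi)\bigr)$, so the naturality square commutes on the nose. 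This yields a natural isomorphism $\mathsf{H}_{N_0}\cong\mathsf{F}$, establishing that $\mathsf{F}$ is representable with representing object $N_0$.

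For the final clause I would invoke the standard categorical fact (see, e.g., \cite{borceux1994handbook}) that a covariant hom functor $\mathrm{Hom}_{\mathcal{C}}(A,-)\colon\mathcal{C}\to\mathcal{S}$ preserves all limits that exist in $\mathcal{C}$, together with the elementary observation that the property of preserving limits is invariant under natural isomorphism of functors. Since $\mathsf{F}\cong\mathsf{H}_{N_0}$, it follows that $\mathsf{F}$ preserves limits.

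There is essentially no serious obstacle; the only point needing a moment's care is the verification that the constant‑zero function is a legitimate network function and that the non‑negativity convention is precisely what forces every set map out of the one‑point network to be expansive. This is exactly the step that promotes the one‑point network from merely mapping to the forgetful functor to actually representing it — without the sign convention one would have to take a representing object with $\omega_0$ below the infimum of $\omega$, which cannot be done uniformly in $N$.
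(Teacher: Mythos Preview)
Your proof is correct and is essentially the same as the paper's: both take the one-point network $N_0$ with zero network function as the representing object, both hinge on the non-negativity convention to ensure every set map $\{\ast\}\to X$ is expansive, and both conclude by citing the standard fact that representable functors preserve limits. The only cosmetic difference is that you write the natural isomorphism as evaluation $\mathrm{Hom}_\mathrm{m}(N_0,N)\to X$ while the paper writes its inverse $X\to\mathrm{Hom}_\mathrm{m}(N_0,N)$.
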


\begin{proof}
We need to show that there is a natural isomorphism of $\mathsf{F}$ with $\mathsf{H}_{N_0}$ for some measure network $N_0$. Let $N_0 = (X_0,\mu_0, \omega_0)$, where $X_0 = \{x_0\}$, $\mu_0(x_0) = 1$ and $\omega_0(x_0,x_0) = 0$. Let $N \xrightarrow[]{\phi} N'$ be an expansive map. We define a natural transformation with component $\eta_N:\mathsf{F}(N) \to \mathsf{H}_{N_0}(N)$ given by $\eta_N(x) = \phi_{x}$, where $\phi_{x}:X_0 \to X$ is defined by $\phi_{x}(x_0) = x$; by our convention that network functions $\omega$ are nonnegative-valued, this is an expansive map. One can check that the requisite diagram
\[\begin{CD} 
X @>\eta_N>> \mathrm{Hom}_\mathrm{m}(N_0,N)
\\ @V \phi VV @VV \mathsf{H}_{N_0}(\phi) V 
\\ X' @>\eta_{N'}>> \mathrm{Hom}_\mathrm{m}(N_0,N') 
\end{CD}\]
commutes. Moreover, the component maps are isomorphisms, so this proves that $\mathsf{H}_{N_0}$ is representable. By \cite[Proposition 2.9.4]{borceux1994handbook}, $\mathsf{F}$ preserves limits.
\end{proof}

\begin{proposition}\label{prop:limits_maps}
    The categories $\mathcal{N}_\mathrm{m}$ and $\mathcal{H}_\mathrm{m}$ have countable nonempty products and coproducts, and pullbacks. The full subcategories $\mathcal{N}_\mathrm{m}^R$ and $\mathcal{H}_\mathrm{m}^R$ containing measure (hyper)networks whose (hyper)network functions are uniformly bounded above by $R \geq 0$ have all of the above, as well as a terminal object.
\end{proposition}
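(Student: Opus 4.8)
The plan is to construct each (co)limit by hand, guided by the forgetful functor $\mathsf{F}:\mathcal{N}_\mathrm{m}\to\mathcal{S}$ of Proposition~\ref{prop:representable}: being representable, $\mathsf{F}$ preserves limits, so the underlying set of any product must be a cartesian product, that of a pullback a fibre product, and that of a terminal object a singleton; the underlying set of a coproduct will be taken to be the disjoint union. The organizing observation is that a morphism in $\mathcal{N}_\mathrm{m}$ or $\mathcal{H}_\mathrm{m}$ is merely a function between underlying sets, so \emph{neither the probability measure nor the topology of a (co)limit object is constrained by the universal property}. Hence the measure may always be chosen to be any fully supported Borel probability measure on the prescribed underlying set — one exists on any separable metrizable space, e.g.\ a convex combination of Dirac masses at a countable dense subset — and the only data requiring care are a Polish topology on the underlying set and the (hyper)network function.

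\textbf{Coproducts.} For a countable nonempty family $\{N_i=(X_i,\mu_i,\omega_i)\}_i$ take $\bigsqcup_i X_i$ with the disjoint union topology (Polish, as a countable disjoint union of Polish spaces), any fully supported measure (for instance $\sum_i c_i\mu_i$ with $c_i>0$ and $\sum_i c_i=1$), and $\omega(x,y)=\omega_i(x,y)$ when $x,y$ lie in the same summand $X_i$ and $\omega(x,y)=0$ otherwise. The coproduct injections are expansive on the diagonal blocks, and for a cocone $\{f_i:N_i\to M\}$ the set-theoretically forced mediating map is expansive because on cross terms $\omega=0\le\omega_M$ by non-negativity of network functions; uniqueness is automatic. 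One must check $\omega$ is bounded, which holds whenever the family has uniformly bounded network functions, and in particular always in $\mathcal{N}_\mathrm{m}^R$ (where moreover $0\le R$, so the construction stays in the subcategory). The hypernetwork version is componentwise: disjoint unions of both the node and hyperedge spaces, $\omega$ block-diagonal-zero on $X\times Y$.

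\textbf{Products and pullbacks.} For the product of $\{N_i\}_i$ take $\prod_i X_i$ with the product topology (Polish for countable products), any fully supported measure (e.g.\ $\bigotimes_i\mu_i$), and $\omega(x,y):=\inf_i\omega_i(x_i,y_i)$; this is measurable (a countable infimum) and bounded, being dominated by any single $\omega_i$ (hence also $\le R$ in the bounded subcategory). The projections are expansive since $\inf_i\omega_i(x_i,y_i)\le\omega_j(x_j,y_j)$, and for a cone $\{g_i:M\to N_i\}$ the forced map $m\mapsto(g_i(m))_i$ is expansive because expansiveness of each $g_i$ gives $\omega_M(m,m')\le\omega_i(g_i(m),g_i(m'))$ for every $i$, hence $\le\inf_i\omega_i(g_i(m),g_i(m'))$. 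For the pullback of a cospan $N_1\xrightarrow{f_1}N_0\xleftarrow{f_2}N_2$ take the fibre product $F=\{(x_1,x_2)\in X_1\times X_2:f_1(x_1)=f_2(x_2)\}$, a Polish topology and a fully supported measure on it, and $\omega_F((x_1,x_2),(x_1',x_2')):=\min\{\omega_1(x_1,x_1'),\omega_2(x_2,x_2')\}$ (bounded, and $\le R$ when $\omega_1,\omega_2\le R$). The two projections are expansive and commute with the $f_i$, and a cone over the cospan induces the map $m\mapsto(g_1(m),g_2(m))$ into $F$, which is expansive by the same computation as for products. Again the hypernetwork versions are componentwise, taking infima (resp.\ fibre products) in both the node and hyperedge coordinates.

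\textbf{Terminal object, and the main obstacle.} In $\mathcal{N}_\mathrm{m}^R$ a terminal object is the one-point network $(\{\star\},\delta_\star,\omega_T)$ with $\omega_T(\star,\star)=R$: for any $N=(X,\mu,\omega)\in\mathcal{N}_\mathrm{m}^R$ the unique function $X\to\{\star\}$ is expansive precisely because $\omega(x,y)\le R=\omega_T(\star,\star)$, and the uniform bound is exactly what makes this work — its absence is why $\mathcal{N}_\mathrm{m}$ has no terminal object. The hypernetwork terminal object has one node, one hyperedge, $\omega\equiv R$. Neither category has an initial object, there being no probability measure on $\emptyset$, consistent with the statement. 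The universal-property checks above are routine once the correct network functions are identified; the delicate steps are those that keep everything inside the category: (i) placing a Polish topology on the pullback's fibre product $F$ — any Polish topology works since morphisms ignore topology, but one must verify such a topology on $F$ exists — and (ii) boundedness of the coproduct's network function. I expect (i) to be the main obstacle, with (ii) immediate in the $R$-bounded subcategories.
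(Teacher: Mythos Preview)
Your proposal is correct and follows essentially the same approach as the paper: the same underlying sets (cartesian product, disjoint union, fibre product, singleton), the same (hyper)network functions (infimum for products and pullbacks, block-diagonal with zero off the blocks for coproducts, constant $R$ for the terminal object), and the same key observation that the probability measure and topology are unconstrained by the universal property since morphisms are bare functions.

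You are in fact more scrupulous than the paper on two points. First, you flag the existence of a suitable Polish topology on the fibre product $F$ as the principal obstacle; the paper simply invokes ``the natural topology'' without further comment, and neither account addresses why $\omega_F$ is then Borel measurable. Second, you correctly observe that boundedness of the coproduct network function requires the family $\{\omega_i\}$ to be uniformly bounded---the paper's proof does not mention this, so for countable families in $\mathcal{N}_\mathrm{m}$ with $\sup_i\|\omega_i\|_\infty=\infty$ the stated construction produces an unbounded $\omega$ and hence does not yield an object of $\mathcal{N}_\mathrm{m}$. Your caveat is therefore a genuine refinement of the paper's argument rather than a gap in your own.
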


\begin{proof}
We will first focus on the categories $\mathcal{N}_\mathrm{m}$ and $\mathcal{N}_\mathrm{m}^R$, as the hypernetwork constructions are conceptually the same, but with more complicated notation. We summarize the constructions for hypernetworks at the end of the proof.

 Let $\{N_j = (X_j,\mu_j,\omega_j)\}_{j}$ be a countable collection of measure networks in $\mathcal{N}_\mathrm{m}$. A product $\Pi_{j} N_j$ must have underlying set $\Pi_j X_j$ (i.e., the product space), which is still a Polish space (with the product topology). Take the measure on the product to be any Borel probability measure (e.g., the product measure) and the network function $\omega_{\Pi_j N_j}$ to be  defined by
 \[
\omega_{\Pi_j N_j}((x_1,x_2,\ldots),(y_1,y_2,\ldots)) := \inf_j \omega_j(x_j,y_j).
 \]
 Then $\omega_{\Pi_j N_j}$ is measurable: the map $((x_i)_i,(y_i)_i) \mapsto \omega_j(x_j,y_j)$ is measurable for each $j$, so the infimum is measurable by \cite[Proposition 2.7]{folland1999real}. We define maps $\Pi_j N_j \xrightarrow[]{\phi_j} N_j$ to be coordinate projections; by construction of $\omega_{\Pi_j N_j}$, these are expansive maps. One can easily check that this construction has the universal property of a product. 
 
 To construct the coproduct $\bigsqcup_j N_j$, we take the underlying space to be the disjoint union $\bigsqcup_j X_j$, endowed with the disjoint union topology (which is Polish). The measure on the union is chosen to be an arbitrary Borel probability measure and the network function $\omega_{\sqcup_j N_j}$ is defined by
 \[
    \omega_{\sqcup_j N_j}(z,z') = \left\{\begin{array}{cl}
    \omega_\ell(z,z') & \mbox{if $z,z' \in X_\ell$} \\
    0 & \mbox{otherwise.}\end{array}\right.
 \]
It is straightforward to check that $\omega_{\sqcup_j N_j}$ is measurable and that the inclusion maps $i_\ell:X_\ell \to \bigsqcup_j X_j$ are expansive. Once again, it is easy to check that this construction has the required universal property.

A pullback of a diagram $N_1 \xrightarrow[]{\psi_1} N \xleftarrow[]{\psi_2} N_2$ in $\mathcal{N}_\mathrm{m}$ is given by the measure network $\overline{N}$ with underlying set $\overline{X}:=\{(x_1,x_2) \in X_1 \times X_2 \mid \psi_1(x_1) = \psi_2(x_2)\}$, endowed with the natural topology, an arbitrary choice of Borel probability measure $\overline{\mu}$, and network function $\overline{\omega}$ defined as in the product construction; that is,
\[
\overline{\omega}\left((x_1,x_2),(y_1,y_2)\right) = \min_i \omega_i(x_i,y_i).
\]
We define maps $\phi_i: \overline{X} \to X_i$ to be coordinate projections. These maps are expansive, by construction, and it is straightforward to show that the universal property for pullbacks holds. 

Observe that $\mathcal{N}_\mathrm{m}$ has no terminal object: by Proposition \ref{prop:representable}, a terminal object must have a singleton as its underlying set. Let $N$ be such a measure network and suppose that $\omega(x,x) = R \geq 0$ (with $x$ the unique element in the underlying set). Then there is no expansive map from any network whose network function is not bounded above by $R$ into $N$, so $N$ cannot be a terminal object. On the other hand, $\mathcal{N}_\mathrm{m}^R$ has terminal object given by $(\{x\},\delta_x,\omega_R)$, where $\delta_x$ is the Dirac measure and $\omega_R(x,x) = R$.

We now explain how to modify the constructions above to apply to measure hypernetworks. For a countable collection $\{H_j = (X_j,\mu_j,Y_j,\nu_j,\omega_j)\}_j$ of measure hypernetworks in $\mathcal{H}_\mathrm{m}$, a product is given by
\[
\left(\Pi_j X_j, \mu, \Pi_j Y_j, \nu, \omega_{\Pi_j H_j}\right),
\]
where $\Pi_j X_j$ and $\Pi_j Y_j$ are product spaces, $\mu$ and $\nu$ are arbitrary Borel probability measures and $\omega_{\Pi_j H_j}((x_1,x_2,\ldots),(y_1,y_2,\ldots)) = \inf_j \omega_j(x_j,y_j)$. A coproduct is given by 
\[
\left(\bigsqcup_j X_j, \mu, \bigsqcup_j Y_j, \nu, \omega_{\sqcup_j H_j}\right),
\]
where $\bigsqcup_j X_j$ and $\bigsqcup_j Y_j$ are disjoint unions endowed with compact Polish topologies, $\mu$ and $\nu$ are arbitrary Borel probability measures and 
\[
\omega_{\sqcup_j H_j}(w,z) = \left\{\begin{array}{cl}
\omega_\ell(w,z) & \mbox{if $w \in X_\ell$ and $z \in Y_\ell$} \\
0 & \mbox{otherwise.}
\end{array}\right.
\]
The remaining constructions extend similarly.
\end{proof}

\begin{remark}
Suppose that $N_1$ and $N_2$ are measure network representations of combinatorial graphs, where the network functions are adjacency functions. The product network function then satisfies 
\[
\omega_{\Pi_j N_j}((x_1,x_2),(y_1,y_2)) = 1 \Leftrightarrow \omega_1(x_1,y_1) = \omega_2(x_2,y_2) = 1;
\]
that is, the product has an edge if and only if there is an edge between each of the nodes in the corresponding graphs. This means that the categorical product in $\mathcal{N}_\mathrm{m}$ is a generalization of the usual tensor product (or Kronecker product) of combinatorial graphs. Similarly, the categorical coproduct in $\mathcal{N}_\mathrm{m}$ is a generalization of the disjoint union of two graphs.
\end{remark}

\begin{remark}
By Proposition \ref{prop:representable}, the underlying sets of limit constructions should agree with the usual constructions in $\mathcal{S}$. Since the categories considered in Proposition \ref{prop:limits_maps} do not contain objects whose underlying sets are empty, none of the categories has an initial object.
\end{remark}

\begin{remark}
The question of whether the categories in Proposition \ref{prop:limits_maps} admit pushouts is not resolved. By Proposition \ref{prop:representable}, if a pushout of a diagram $N_1 \xleftarrow[]{\psi_1} N \xrightarrow[]{\psi_2} N_2$ in $\mathcal{N}_\mathrm{m}$ was given by a measure network $\widetilde{N}$, then the underlying set would have to be $X_1 \sqcup X_2/\sim$, where $\sim$ is the equivalence relation generated by $\psi_1(z) \sim \psi_2(z)$. However, the choice of topology is not clear: one can construct a diagram $N_1 \xleftarrow[]{\psi_1} N \xrightarrow[]{\psi_2} N_2$ in $\mathcal{N}_\mathrm{m}$ such that the quotient topology on $X_1 \sqcup X_2/\sim$ is not Hausdorff, so cannot be Polish. Assuming for the moment that the quotient topology is Polish, we can proceed with the construction by choosing an arbitrary probability measure $\widetilde{\mu}$ and defining the network function $\widetilde{\omega}$ by 
\[
\widetilde{\omega}([x],[y]) = \max \{\omega_\ell(x_\ell, y_\ell) \mid \exists \, x_\ell \in X_\ell \cap [x] \mbox{ and } y_\ell \in Y_\ell \cap [y], \, \ell =1,2 \},
\]
where we take the convention that $\max \emptyset = 0$. The pushout maps $\phi_i:X_i \to \widetilde{X}$ are given by $\phi_i(x) = [x]$. 

Generally, it appears that pushouts exist in the case that $X_1 \sqcup X_2/\sim$ admits a Polish topology such that the function $\widetilde{\omega}$ is Borel measurable. This is the case, for example, when all networks in the diagram have finite underlying sets. On the other hand, we were not able to produce any explicit counterexample, so the existence of pushouts is an open question.
\end{remark}

On the other hand, the category $\mathcal{N}_\mathrm{c}$ of measure networks with expansive couplings has fewer limit constructions, due to measure-theoretic issues.

\begin{proposition}\label{prop:no_limits}
    The categories $\mathcal{N}_\mathrm{c}$ and $\mathcal{H}_\mathrm{c}$ have initial objects. The full subcategories $\mathcal{N}_\mathrm{c}^R$ and $\mathcal{H}_\mathrm{c}^R$ of compact measure (hyper)networks with (hyper)network functions uniformly bounded by $R \geq 0$ additionally have  terminal objects. However, none of these categories admit products, coproducts, pullbacks or pushouts.
\end{proposition}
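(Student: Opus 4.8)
\begin{proofsketch}
The two positive assertions are immediate, so I will dispatch them first and then concentrate on the non-existence claims, where the content lies. For the initial object of $\mathcal{N}_\mathrm{c}$ I would take the one-point network $I=(\{\ast\},\delta_\ast,0)$: for every compact measure network $N$, the set $\mathcal{C}(\delta_\ast,\mu_N)$ is a singleton, and its unique element is an expansive coupling because $\omega_I\equiv 0\le\omega_N$. The same $I$ is initial in $\mathcal{N}_\mathrm{c}^R$. Replacing the network function by the constant $R$ gives $T=(\{\ast\},\delta_\ast,R)$, which is terminal in $\mathcal{N}_\mathrm{c}^R$: again $\mathcal{C}(\mu_N,\delta_\ast)$ is a singleton and its element is expansive exactly because $\omega_N\le R$ for $N\in\mathcal{N}_\mathrm{c}^R$. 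Using $(\{\ast\},\delta_\ast,\{\ast\},\delta_\ast,0)$ and $(\{\ast\},\delta_\ast,\{\ast\},\delta_\ast,R)$ handles $\mathcal{H}_\mathrm{c}$ and $\mathcal{H}_\mathrm{c}^R$. (As a side remark, $\mathcal{N}_\mathrm{c}$ itself has no terminal object, since a candidate with network function bounded by $R_0$ receives no morphism from the one-point network with function value $R_0+1$; but this is not part of the claim.)

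The heart of the argument is that $\mathcal{N}_\mathrm{c}$ has no binary products. Let $\mathbf{2}=(\{1,2\},\tfrac12(\delta_1+\delta_2),0)$ and let $\sigma=\tfrac12(\delta_{(1,2)}+\delta_{(2,1)})$ be the swap coupling; since $\mathbf{2}$ is compact with zero network function, the identity $\mathbbm{1}_{\mathbf 2}$ and $\sigma$ are both expansive couplings $\mathbf{2}\to\mathbf{2}$. Suppose for contradiction that some $P=(Z,\lambda,\omega_P)\in\mathcal{N}_\mathrm{c}$ were a product of $\mathbf{2}$ with $\mathbf{2}$, with projection couplings $p_1,p_2\colon P\to\mathbf{2}$. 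I would feed the universal property the two cones $(\mathbf{2};\mathbbm{1}_{\mathbf 2},\mathbbm{1}_{\mathbf 2})$ and $(\mathbf{2};\mathbbm{1}_{\mathbf 2},\sigma)$, obtaining expansive couplings $\Delta,\Delta'\in\mathcal{C}(\tfrac12(\delta_1+\delta_2),\lambda)$ with
\[
p_1\bullet\Delta=p_2\bullet\Delta=\mathbbm{1}_{\mathbf 2},\qquad p_1\bullet\Delta'=\mathbbm{1}_{\mathbf 2},\qquad p_2\bullet\Delta'=\sigma.
\]
Writing $g_k\colon Z\to[0,1]$ for the weight the disintegration of $p_k$ over $Z$ assigns to $\{1\}$ and setting $A_1=\{g_1=1\}$, $A_2=\{g_1=0\}$, $B_1=\{g_2=1\}$, $B_2=\{g_2=0\}$, I would unwind the gluing formula of Lemma \ref{lem:gluing_lemma} for $p_k\bullet\Delta$. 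Vanishing of the off-diagonal entries of $p_1\bullet\Delta=\mathbbm{1}_{\mathbf 2}$ forces the conditional measures $\Delta_1,\Delta_2$ of $\Delta$ over its source $\{1,2\}$ to be concentrated on $A_1,A_2$; the equation $p_2\bullet\Delta=\mathbbm{1}_{\mathbf 2}$ forces them onto $B_1,B_2$. Since $\lambda=\tfrac12(\Delta_1+\Delta_2)$ is the target marginal of $\Delta$, this means $\lambda$ is concentrated on $(A_1\cap B_1)\cup(A_2\cap B_2)$. Running the same computation on $\Delta'$, the swap in the second slot interchanges the roles of $B_1,B_2$ there, so $\lambda$ is concentrated on $(A_1\cap B_2)\cup(A_2\cap B_1)$. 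These two sets are disjoint (the $A_i$ are disjoint, as are the $B_j$), so $\lambda$ would be the zero measure, a contradiction.

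The dual computation rules out binary coproducts: with injections $\iota_1,\iota_2\colon\mathbf{2}\to C$ and the cocones $(\mathbf{2};\mathbbm{1}_{\mathbf 2},\mathbbm{1}_{\mathbf 2})$, $(\mathbf{2};\mathbbm{1}_{\mathbf 2},\sigma)$, one gets $\nabla,\nabla'\colon C\to\mathbf{2}$, and applying the gluing formula to $\nabla\bullet\iota_k$ and $\nabla'\bullet\iota_k$ concentrates the measure of $C$ on the set where the two associated functions agree and simultaneously on the set where they disagree. Absence of pullbacks and pushouts then follows formally: a morphism $\mathbf{2}\to I$ exists, and $\mathrm{Hom}(W,I)$ has at most one element for every $W$, so a cone over the cospan $\mathbf{2}\to I\leftarrow\mathbf{2}$ is just a pair of morphisms into the two copies of $\mathbf{2}$; hence a pullback of this cospan would be a product $\mathbf{2}\times\mathbf{2}$, which does not exist. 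Dually, since $I$ is initial, a pushout of the span $\mathbf{2}\leftarrow I\to\mathbf{2}$ would be a coproduct $\mathbf{2}\sqcup\mathbf{2}$, again impossible; in $\mathcal{N}_\mathrm{c}^R$ one may instead use the cospan $\mathbf{2}\to T\leftarrow\mathbf{2}$ over the terminal object and the span over the initial object. For $\mathcal{H}_\mathrm{c}$ and $\mathcal{H}_\mathrm{c}^R$ one repeats everything with $\mathbf{2}$ replaced by $(\{1,2\},\tfrac12(\delta_1+\delta_2),\{\ast\},\delta_\ast,0)$: the hyperedge coordinate of every coupling involved is the unique coupling of $\delta_\ast$ with itself, so only the node coordinate—which is precisely the computation above—does any work. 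The same counterexamples lie in $\mathcal{N}_\mathrm{c}^R$ and $\mathcal{H}_\mathrm{c}^R$ since all objects and morphisms used have network functions bounded by $0\le R$.

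The one genuinely delicate step is the measure-theoretic bookkeeping in the product (and coproduct) argument: one must carefully disintegrate the gluing measure defining $p_k\bullet\Delta$ to see that the compatibility equations push all the mass of $\lambda$ onto the extreme-value sets $A_i,B_j$, and that switching one cone to the swap cone moves it onto a complementary family of such sets. The fact that every network used is finite, hence compact, means Lemma \ref{lem:gluing_lemma} and Proposition \ref{prop:composition_preserves_expansiveness} apply without any subtlety; everything outside the boxed displays above is routine category theory.
\end{proofsketch}
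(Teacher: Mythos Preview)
Your argument is correct and lands on the same overall architecture as the paper: the one-point networks $(\{\ast\},\delta_\ast,0)$ and $(\{\ast\},\delta_\ast,R)$ serve as initial and terminal objects, non-existence of products and coproducts is obtained by feeding the universal property two well-chosen cones from a fixed two-point network, and pullbacks/pushouts are reduced to products/coproducts via a cospan (span) through a one-point object.

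The genuine difference is in how the contradiction for products is extracted. The paper first abstracts out a lemma: if an expansive coupling $\pi:\overline N\to N$ admits a section (some $\xi$ with $\pi\bullet\xi=\mathbbm 1_N$), then $\pi$ is induced by a map $\phi$ and $\mathrm{supp}(\xi)=\{(\phi(z),z)\}$. Feeding the cone $(\mathbbm 1,\mathbbm 1)$ then forces both projections to be induced by the \emph{same} map, hence $\pi_1=\pi_2$; a second cone $(\mathbbm 1,\mu\otimes\mu)$ immediately gives the impossible equation $\pi\bullet\xi'=\mathbbm 1=\mu\otimes\mu$. You bypass this lemma entirely and work directly with disintegrations: your two cones $(\mathbbm 1,\mathbbm 1)$ and $(\mathbbm 1,\sigma)$ each pin the product measure $\lambda$ onto a union of ``extreme'' level sets of the disintegrated projections, and the swap in the second cone lands $\lambda$ on a disjoint such union. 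Your route is more elementary in that it avoids the support-tracking of Lemma \ref{lem:supp1} and the ``sections are map-induced'' step; the paper's route buys a reusable structural lemma and a one-line endgame. For the pullback reduction, the paper passes through the one-point network $N_R$ with $R$ an upper bound for $\omega_N$, whereas you pass through the initial object $I$---both work, yours being available precisely because you chose $\omega_{\mathbf 2}\equiv 0$.
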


We will use a technical lemma.

\begin{lemma}\label{lem:no_limits_lemma}
Let $\overline{N} \xrightarrow[]{\pi} N$ be an expansive coupling of measure networks $\overline{N},N \in \mathcal{N}_\mathrm{c}$ such that there exists an expansive coupling $N \xrightarrow[]{\xi} \overline{N}$ with $\pi \bullet \xi = \mathbbm{1}_N$. Then $\pi$ is induced by a map $\phi:\overline{X} \to X$ (i.e. $\pi = (\mathrm{id}_{\overline{X}} \times \phi)_\# \overline{\mu}$) and 
$
\mathrm{supp}(\xi) = \{(\phi(z),z) \mid z \in \overline{X}\}.
$
\end{lemma}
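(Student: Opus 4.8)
The plan is to first turn the algebraic identity $\pi\bullet\xi=\mathbbm 1_N$ into purely set-theoretic information about $\mathrm{supp}(\pi)$ and $\mathrm{supp}(\xi)$, and then to promote that information back up to the level of measures. Throughout, write $N=(X,\mu,\omega)$ and $\overline N=(\overline X,\overline\mu,\overline\omega)$, so that $\xi\in\mathcal{C}(\mu,\overline\mu)$ lives on $X\times\overline X$, $\pi\in\mathcal{C}(\overline\mu,\mu)$ lives on $\overline X\times X$, the gluing $\pi\boxtimes\xi$ lives on $X\times\overline X\times X$, and $\pi\bullet\xi=(p_X\times p_X)_\#(\pi\boxtimes\xi)=\mathbbm 1_N=(\mathrm{id}_X\times\mathrm{id}_X)_\#\mu$; in particular $\mathrm{supp}(\pi\bullet\xi)$ is contained in the diagonal $\Delta_X=\{(x,x):x\in X\}$. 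The technical heart of the argument is a rigidity observation established in the next paragraph; everything else is bookkeeping with disintegrations, marginals, and the closed graph theorem.

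The rigidity observation is: if $(a,z)\in\mathrm{supp}(\xi)$ and $(z,b)\in\mathrm{supp}(\pi)$ for some $z\in\overline X$, then $a=b$. To prove it I would argue exactly as in the derivation of \eqref{eqn:support_condition} in the proof of Proposition~\ref{prop:isomorphism_equivalences}: from the disintegration formula $\pi\boxtimes\xi(da\times dz\times db)=\pi_z(db)\,\xi_z(da)\,\overline\mu(dz)$ furnished by Lemma~\ref{lem:gluing_lemma}, the two membership hypotheses force $(a,z,b)\in\mathrm{supp}(\pi\boxtimes\xi)$. Pushing this forward along the projection to the first and third coordinates gives $(a,b)\in\mathrm{supp}(\pi\bullet\xi)\subseteq\Delta_X$, so $a=b$. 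This is the step where compactness of $X$ and $\overline X$ genuinely enters (as it does in Lemma~\ref{lem:supp1} and Proposition~\ref{prop:composition_preserves_expansiveness}), and I expect it to be the main obstacle.

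With the rigidity observation in hand I would construct $\phi$. By Lemma~\ref{lem:supp1}(2) (and its mirror version, which is just \cite[Lemma 2.2]{dghlp-focm}), every $z\in\overline X=\mathrm{supp}(\overline\mu)$ admits at least one $b$ with $(z,b)\in\mathrm{supp}(\pi)$ and at least one $a$ with $(a,z)\in\mathrm{supp}(\xi)$; rigidity forces all such $a$ and $b$ to coincide, so we may define $\phi(z)\in X$ to be this common value. By Lemma~\ref{lem:supp1}(1), $\mathrm{supp}(\pi)$ consists precisely of the pairs $(z,\phi(z))$ with $z\in\overline X$, i.e. $\mathrm{supp}(\pi)$ is the graph of $\phi$, and likewise $\mathrm{supp}(\xi)=\{(\phi(z),z)\mid z\in\overline X\}$, which is the second assertion of the lemma. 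Since $\mathrm{supp}(\pi)$ is closed and $X$ is compact Hausdorff, the closed graph theorem shows $\phi$ is continuous; in particular it is Borel measurable and its graph $\Gamma=\{(z,\phi(z))\}$ is a Borel subset of $\overline X\times X$.

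It remains to identify the measure $\pi$. Since $\mathrm{supp}(\pi)=\Gamma$ we have $\pi(\Gamma)=1$, and the map $z\mapsto(z,\phi(z))$ is a Borel isomorphism of $\overline X$ onto $\Gamma$ with inverse the first coordinate projection; transporting $\pi$ through this isomorphism yields a Borel probability measure $\lambda$ on $\overline X$. Comparing first marginals, $\lambda(C)=\pi(C\times X)=\overline\mu(C)$ for every Borel $C\subseteq\overline X$, so $\lambda=\overline\mu$; evaluating on a product set $A\times B$ then gives $\pi(A\times B)=\overline\mu(A\cap\phi^{-1}(B))=(\mathrm{id}_{\overline X}\times\phi)_\#\overline\mu(A\times B)$, whence $\pi=(\mathrm{id}_{\overline X}\times\phi)_\#\overline\mu$, completing the proof.
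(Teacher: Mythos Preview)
Your proof is correct and follows essentially the same route as the paper's: both hinge on the support condition \eqref{eqn:support_condition} (your ``rigidity observation'') to show that $\mathrm{supp}(\pi)$ is a graph, and both read off $\mathrm{supp}(\xi)$ from the same mechanism. Your treatment is in fact slightly more complete than the paper's, since you explicitly verify continuity of $\phi$ via the closed graph theorem and then identify $\pi$ with the pushforward $(\mathrm{id}_{\overline X}\times\phi)_\#\overline\mu$, whereas the paper leaves that step implicit.
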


\begin{proof}
    For the first claim, it suffices to show that for all $z \in \overline{X}$ there is a unique $x \in X$ such that $(z,x) \in \mathrm{supp}(\pi)$, and this follows by a similar argument to the one used in Part 2 of Proposition \ref{prop:isomorphism_equivalences}. Let $\phi$ denote the map which induces $\pi$ and note that $\mathrm{supp}(\pi) = \{(z,\phi(z)) \mid z \in \overline{X}\}$.
    
    We now characterize the support of $\xi$. First, it must contain all points of the form $(\phi(z),z)$: if there exists $z \in \overline{X}$ such that $(\phi(z),z) \not \in \mathrm{supp}(\xi)$, then  $(\phi(z),\phi(z))$ does not lie in the support of $\pi \bullet \xi = \mathbbm{1}_N$, which is a contradiction. Conversely, suppose that $(x,z) \in \mathrm{supp}(\xi)$; then $(x,\phi(z)) \in \mathrm{supp}(\pi \bullet \xi) = \mathrm{supp}(\mathbbm{1}_N)$, and this implies $x = \phi(z)$.
\end{proof}

\begin{proof}[Proof of Proposition \ref{prop:no_limits}]
Similar to proof of Proposition \ref{prop:limits_maps}, we focus on the categories of measure networks. All of the arguments and constructions generalize straightforwardly to the categories of measure hypernetworks.

Let $N_R = (\{x\},\delta_x,\omega_R)$, for $R \geq 0$, be the measure network with $\delta_x$ the Dirac measure and $\omega_R(x,x) = R$. That $N_0$ is an initial object for $\mathcal{N}_\mathrm{c}$ follows because there is a unique coupling of $\delta_x$ and any other probability measure, and such a coupling will always be expansive, by construction of $\omega_0$. The lack of a terminal object for $\mathcal{N}_\mathrm{c}$ follows by an argument similar to the case of $\mathcal{N}_\mathrm{m}$ (Proposition \ref{prop:limits_maps}). The fact that $N_R$ is a terminal object for $\mathcal{N}_\mathrm{c}^R$ follows similarly.

Next, we consider the case of products and show, in particular, that there is no product of a measure network $N$ with itself, provided the cardinality of its underlying set is at least $2$. Suppose that there is such a product; we denote the product as $\overline{N}$ and the projection maps onto the left and right factors as $\overline{N} \xrightarrow[]{\pi_i} N$, $i=1,2$, respectively. Consider the diagram $N \xleftarrow[]{\mathbbm{1}_N} N \xrightarrow[]{\mathbbm{1}_N} N$. By the universal property of the product, there exists a unique $N \xrightarrow[]{\xi} \overline{N}$ such that $\pi_i \bullet \xi = \mathbbm{1}_N$ for $i=1,2$. From Lemma \ref{lem:no_limits_lemma}, we deduce that the projections $\pi_1$ and $\pi_2$ must be equal: they are induced by measure-preserving maps $\phi_1$ and $\phi_2$ and we have 
\[
\mathrm{supp}(\xi) = \{(\phi_1(z),z) \mid z \in \overline{X}\} = \{(\phi_2(z),z) \mid z \in \overline{X}\},
\]
so that the claim follows. To derive a contradiction, consider the diagram $N \xleftarrow[]{\mathbbm{1}_N} N \xrightarrow[]{\mu \otimes \mu} N$. The universal property implies that there exists and expansive coupling $\xi'$ of $\mu$ and $\overline{\mu}$ such that $\pi \bullet \xi' = \mathbbm{1}_N$ and $\pi \bullet \xi' = \mu \otimes \mu$. This is impossible, provided the underlying set of $N$ has at least two points.

The lack of a coproduct of a measure network $N$ with itself (provided $N$ has cardinality at least $2$) is proved similarly, so we only sketch the argument. Suppose that $\overline{N}$ is a coproduct of $N$ with itself, with associated maps $N \xrightarrow[]{\pi_i} \overline{N}$, $i=1,2$ for the left and right factors. Applying the universal property to the diagram $N \xrightarrow[]{\mathbbm{1}_N} N \xleftarrow[]{\mathbbm{1}_N} N$, together with Lemma \ref{lem:no_limits_lemma}, implies that $\pi_1 = \pi_2$. One then derives a contradiction by applying the universal property to the diagram $N \xrightarrow[]{\mathbbm{1}_N} N \xleftarrow[]{\mu \otimes \mu} N$.

To see that $\mathcal{N}_\mathrm{c}$ does not have pullbacks, let $N$ be a measure network whose underlying set has at least 2 points, and suppose that its network function $\omega$ is bounded above by $R \geq 0$. Let $N_R$ be the one-point measure network defined earlier in the proof. Then there is a unique expansive morphism $N \xrightarrow[]{\pi} N_R$. A pullback for the diagram $N \xrightarrow[]{\pi} N_R \xleftarrow[]{\pi} N$ would be a product of $N$ with itself, and we showed above that no such object exists. A similar argument illustrates the lack of pushouts.
\end{proof}

In summary, the similarity in structure between $\mathcal{N}_\mathrm{m}$ and the category of sets $\mathcal{S}$ means that it enjoys many basic limit constructions. However, the ambivalence of expansive maps to topological and measure structure may be seen as undesirable; for example, this causes the issue with functoriality for the $q$-clique expansion and $q$-line graph maps when $q < \infty$, and this category is unable to capture the notion of strong isomorphism. On the other hand, the additional measure-theoretic structure of $\mathcal{N}_\mathrm{c}$ leads to obstructions to many limit constructions. An interesting future direction of research will be to explore explore alternative category structures on measure networks which balance structure preservation and richness.

\section{Computational Examples}
\label{sec:examples}

In this section, we demonstrate our computational framework on various analysis tasks involving  hypergraph matching, comparison, and simplification.  

\subsection{Implementation Details}
\label{sec:implementation}
To compute the distance between two hypergraphs, we use the Python3 implementation of CO-Optimal Transport (COOT) due to Redko \etal~\cite{TitouanRedkoFlamary2020}~\footnote{https://github.com/PythonOT/COOT}. They give an efficient algorithm for efficiently approximating the solution to the optimization problem \eqref{eqn:distance} in the finite case, when $p = 2$. Computation of $d_{\mathcal{H},2}$ is a nonconvex bilinear programming problem \cite{gallo1977bilinear}, and is NP-Hard to solve exactly. According to~\cite{TitouanRedkoFlamary2020}, the overall computational complexity of their algorithm is $O(\min((n+n')dd' + n'^2n, (d+d')nn' + d'^2d))$, where $n = \lvert X\rvert$, $n'=\lvert X'\rvert$, $d=\lvert Y\rvert$ and $d' = \lvert Y'\rvert$. 
The dependencies of the COOT implementation include \emph{Numpy}, \emph{Matplotlib}, and \emph{Python Optimal Transport (POT)}~\cite{flamary2021pot}~\footnote{https://pythonot.github.io/}. Our main computational contributions are new processing algorithms for modeling hypergraphs as measure hypernetworks amenable to the COOT algorithm.

\subsection{Soft Matching Between Hypergraphs}
\label{sec:soft-matching-toy}
 
We first expand on Example~\ref{example:hypergraph-a} and give a refined method for generating a measure hypernetwork from a combinatorial hypergraph. 

Before we begin, we recall various choices available when modeling a hypergraph as a hypernetwork. 
Here we suppose that we are given a hypergraph $(X,Y)$ and are tasked with obtaining a hypernetwork $(X,\mu,Y,\nu,\omega)$.

First we consider $\mu$. Let $x\in X$. Two simple choices for $\mu(x)$ are as follows:

\begin{itemize}\denselist
\item Uniform: $\mu(x) := {1}/{\lvert X \rvert}.$ 
\item Normalized node degree: $\mu(x):= {\mathrm{deg}(x)}/{\sum_{x'\in X}\mathrm{deg}(x')}.$
\end{itemize}

Next we consider $\nu$. Let $y\in Y$. Two simple choices for $\nu(y)$ are as follows:

\begin{itemize}\denselist
\item Uniform: $\nu(y) := {1}/{\lvert Y \rvert}.$ 
\item Normalized sum of node degree: \newline $\nu(y):= {\widehat{\nu}(y)}/{\sum_{y'\in Y}\widehat{\nu}(y')}$, where $\widehat{\nu}(y) := \sum_{x\in y}\mathrm{deg}(x).$
\end{itemize}

The simplest choice for $\omega$ is to use the incidence function $\omega(x,y) = 1$ if $x \in y$ and $0$ otherwise. Describing more interesting choices for $\omega$ requires some setup. Recall the line graph construction in~\autoref{sec:transformations}: a line graph $\mathsf{L}(H)$ (in this section, $\mathsf{L}(H)$ will be used generically as a stand in for several specific line graph constructions) is constructed from a combinatorial hypergraph $H=(X,Y)$ by taking the node set of $\mathsf{L}(H)$ to be $Y$, and adding an edge $\{y,y'\}$ whenever $y\cap y' \neq \emptyset$---this coincides with the $\mathsf{L}_{\infty}$ line graph map defined in~\autoref{sec:clique_expansion}. Moreover, $\mathsf{L}(H)$ may be weighted---our line graph maps $\mathsf{L}_q$ for $q \in [1,\infty)$ define weighted variants of the line graph. 

In the hypergraph modeling task discussed here, the line graph map is only an intermediate step and we have found empirically that different weighting schemes are more useful. To assign a hypernetwork function to $H$, we will use a metric structure on the line graph, where it is more useful to intuitively think of edge weights as conductances rather than impedances---that is, hyperedges that share more nodes are considered as more similar and should thus receive a smaller edge weight. With this in mind, we use edge weights corresponding to \emph{intersection size} ${1}/{\lvert y \cap y' \rvert}$ (if $y\cap y' \neq \emptyset$) or \emph{reciprocal Jaccard index} ${\lvert y \cup y'\rvert}/{\lvert y \cap y'\rvert}$. Then, we may define the hypernetwork function as $\omega(x,y):= \min \{d_{\mathsf{L}(H)}(y',y) : x \in y'\}$, where $d_{\mathsf{L}(H)}$ denotes the shortest path metric with respect to the chosen edge weights. In summary, we consider three choices for $\omega$:
\begin{itemize}\denselist
\item Intersection size: $\omega$ is the shortest path distance based on the intersection size-weighted line graph. 
\item Jaccard index: $\omega$ is the shortest path distance based on the reciprocal Jaccard index-weighted line graph.
\item Incidence: $\omega(x,y) = 1$ if $x \in y$ and $0$ otherwise.  
\end{itemize}

\begin{example}[Hypergraph Matching Toy Examples]
\label{example:matching-toys}
\autoref{fig:matching-toys} shows two toy examples of matching between nodes and hyperedges of hypergraphs, computed simultaneously via the co-optimal transport distance $d_{\mathcal{H},2}$. Hypergraphs are visualized using the hybrid convex hull/incidence graph method of~\autoref{fig:hypergraph-a-b}. Both examples utilize node degree-based probability measures and Jaccard shortest path hypernetwork functions.

\begin{figure}[!ht]
    \centering
    \includegraphics[width=0.99\columnwidth]{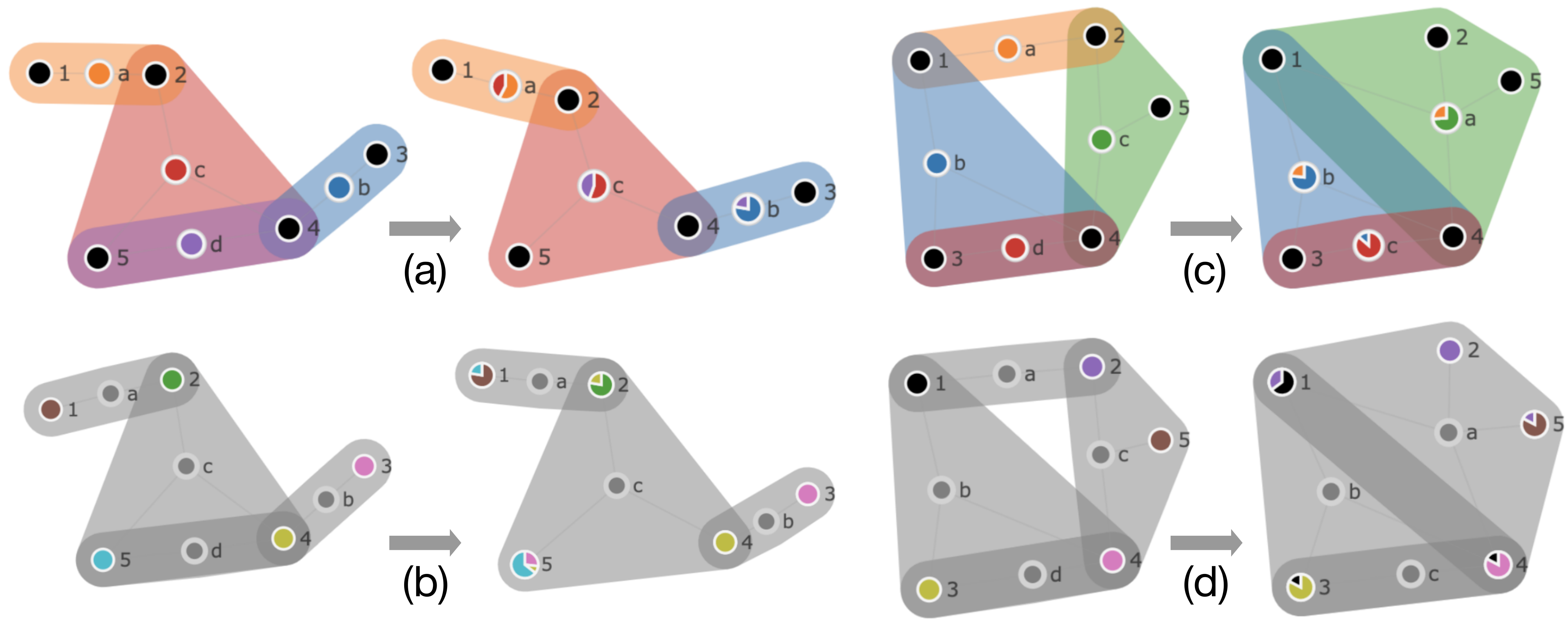}
    \caption{Two toy examples of hypergraph matching.}
    \label{fig:matching-toys}
\end{figure}

The coupling matrices after running the co-optimal transport optimization on the first pair of hypergraphs $H$ and $H'$ are shown below.   
The 1st example in (a)-(b) has optimal coupling matrices $\xi$ for hyperedge coupling (rows indexed by hyperedges of $H$ and columns indexed by hyperedges of $H'$) and $\pi$ for node coupling (rows indexed by nodes of $H$ and columns indexed by nodes of $H'$). 
\[
\xi = 
\begin{bmatrix}
0.158 & 0 & 0\\
0 & 0.211 & 0 \\
0.115 & 0 & 0.254 \\
0 & 0.062 & 0.201
\end{bmatrix} \;
\pi = 
\begin{bmatrix}
0.111 & 0 & 0 & 0 & 0\\
0 & 0.111 & 0 & 0 & 0 \\
0 & 0 & 0.143 & 0 & 0.079 \\
0 & 0.032 & 0 & 0.286 & 0.016 \\
0.032 & 0 & 0 & 0 & 0.190
\end{bmatrix}.
\]

In \autoref{fig:matching-toys}(a), we illustrate the hypergraph matching $H \to H'$, where the visual encodings capture information from the  hyperedge coupling matrix $\xi$.   Colored nodes (with white rim) and colored convex hulls in the hybrid visualization for $H$ represent the four hyperedges of $H$. 
The co-optimal transport results in a color transfer such that colored nodes in $H'$ capture information from the hyperedge coupling matrix $\xi$ via pie charts.  
For instance, the 3rd column of $\xi$ shows that hyperedges $c$ (red) and $d$ (purple) in $H$ are both matched to the hyperedge $c$ in $H'$ with non-zero probabilities.  
Therefore node $c$ in $H'$ is visualized by a pie chart in both red and purple. 
The convex hull associated with hyperedge $c \in H'$, on the other hand, carries the color of the hyperedge $c \in H$ with the highest coupling probability. 
Similarly in \autoref{fig:matching-toys}(b), node coupling between $H$ and $H'$ is visualized via a color transfer and pie charts. For example, both node 2 and node 4 in $H$ are matched with node 2 in $H'$ with non-zero probability, most likely due to the symmetry of these hypergraphs.  \autoref{fig:matching-toys}(c)-(d) show another visualization of a hypergraph matching.
\end{example}

\begin{figure}[!ht]
    \centering
    \includegraphics[width=\columnwidth,trim={15cm 15cm 15cm 15cm},clip]{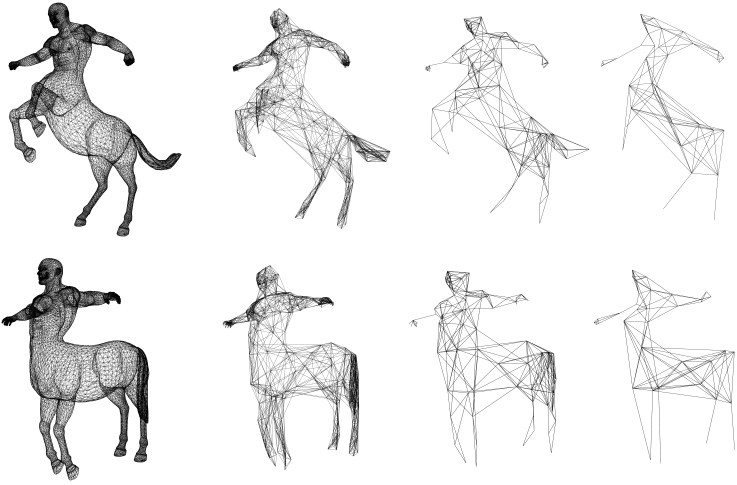}
    \caption{Multiscale reduction via iterated nerve graphs.}
    \label{fig:multiscale-nerve}
\end{figure}

\subsection{Multiscale Graph Matching}
\label{sec:multires-matching}

In this section, we give a use case of hypergraph matching for multiscale graph matching. Multiscale structure has implicitly been shown to be instrumental in providing efficient proxies for Gromov-Wasserstein matching through low-rank approximations \cite{scetbon2022linear}, quantization \cite{chowdhury2021quantized}, and iterated clustering approaches \cite{xu2019scalable, blumberg2020mrec}. While these aforementioned works have focused on demonstrating performance benchmarks against a well-defined problem of matching point clouds or graphs, we focus on demonstrating a new use case toward \emph{summarized} graph matching.  

Our method proceeds by first constructing multiple levels of simplification for two graphs, and then performing graph matching within each level of simplification \emph{while maintaining compatibility across levels of simplification}. Note that unless explicitly enforced, compatibility may be broken by the symmetries of the problem (e.g. a left limb being matched to a left limb at one scale and to a right limb at another scale). Hypergraph matching is specifically used to maintain the compatibility condition. Informally, we are summarizing our graph matching at multiple levels of granularity while maintaining consistency across summaries.

The first stage of our method is inspired by a construction known as the \emph{multiscale mapper} \cite{DeyMemoliWang2016}. We begin by recalling some concepts related to the \emph{mapper}  \cite{singh2007topological} construction. Given a topological space $X$ and a finite open cover $\mathcal{U}=\{U_i\}_{i\in I}$ of $X$, that is $X \subseteq \bigcup_{i \in I}U_i$,  the \emph{nerve} of $\mathcal{U}$ is the simplicial complex $\mathsf{N}(\mc{U})$ with vertices given by the index set $I$ and subsets $\{i_0,\ldots,i_k\} \subseteq I$ forming $k$-simplices whenever $U_{i_0} \cap \ldots \cap U_{i_k} \neq \emptyset$, for any $k\geq 1$. The \emph{nerve graph} is simply the 1-skeleton of this simplicial complex, and it is easier to compute in practice than the nerve complex because it involves checking only pairwise intersections. Next, given topological spaces $X$ and $Y$, a finite open cover $\mc{V}$ of $Y$, and a continuous map $f:X\to Y$, consider a pullback cover of $X$, defined as  $f^{-1}(\mc{V})$. The mapper construction is simply the nerve of such a pullback cover, denoted as $\mathsf{N}(f^{-1}(\mc{V}))$. In practice, one often considers only the nerve graph, and refers to the simplified construction as a \emph{mapper graph}.

The mapper graph provides a representation of data at a \emph{single} scale controlled by the scale of the cover $\mc{V}$. The multiscale mapper was devised as a method for relating mapper graphs constructed at different scales. In this setup, the key idea is that of a map of covers. Let $\mc{U}=\{U_i\}_{i\in I}, \mc{V}=\{V_j\}_{j\in J}$ be two covers of a topological space $Z$. Then a set map $\xi:I \to J$ is a \emph{map of covers} if $U_i \subseteq V_{\xi(i)}$ for all $i \in I$. The multiscale mapper construction utilizes the observation \cite{DeyMemoliWang2016} that a map of covers induces a map of pullback covers, and thus induces a map between mapper graphs at different scales.

\paragraph{Step 1: multiscale reduction}
With the mapper context in place, we now present the first stage of our multiscale graph matching method. Given a graph $G=(V,E)$, we first use a data-driven method (described below) to compute an overlapping cover $\mc{U}=\{U_i\}_{i\in I}$ of $V$ and its nerve graph $G_1$. We then repeat the process on the new nerve graph $G_1$, obtaining a reduced graph at each step of the iteration, $G_1, G_2, \dots$, etc. We terminate the reduction process after the number of nodes in the graph falls below a threshold $n_\alpha$. For a multiscale graph matching problem, we compute these reductions for each graph and pass the collections of nerve graphs into the second stage of the method.

To compute a data-driven cover, we use a procedure involving the graph heat kernel. The heat kernel and related spectral constructions are well-known for capturing shape signatures \cite{reuter2006laplace, memoli2011spectral}, and thus help produce semantically meaningful cover elements. Moreover, heat kernels have been shown to produce superior performance in GW graph matching applications \cite{chowdhury2021generalized}, and thus we are able to recycle computational elements across both stages of our multiscale graph matching method. Our exact procedure is described next.

Fix a graph $G$ with node set $V$. Initialize a set of visited nodes $U=\emptyset$ as well as a cover $Y=\emptyset$. Also fix a node $x\in V \setminus U$. Let $L$ denote the normalized graph Laplacian of $G$. We first compute its eigendecomposition $\Phi\Lambda \Phi^T$. Computing a full eigendecomposition is an $O(n^3)$ operation, so for large graphs we limit runtime by using Lanczos iterations to obtain extremal eigenpairs and using these to construct a low-rank factorization of $L$. In our experiments, we have found the top 300 largest eigenvalues and their eigenvectors to provide good results while keeping runtimes low (e.g. below 1 minute for a graph having over 15K nodes). Next, note that for a given $t>0$, we may compute the graph heat kernel $K^t := \operatorname{exp}(-tL)$ as $K^t = \Phi\operatorname{exp}(-t\Lambda)\Phi^T$. Given a Dirac delta vector $\delta_x$, the matrix-vector product $v:= K^t\delta_x$ can be interpreted as the diffusion of a unit mass of heat out from $x$ within time $t$, and essentially has the form of a Gaussian centered at $x$. Following the idea of full width at half maximum (FWHM), we mark the set $\{v \geq \max(v)/2\}$ as visited, and set $U \leftarrow U \cup \{v \geq \max(v)/2\}$. Additionally we set $Y \leftarrow Y \cup \{\{ v \geq \max(v)/4\}\}.$ We then select another $x \in V\setminus U$ and iterate the procedure until $U = V$. Note that the use of $U$ prevents us from sampling graph nodes too densely, and the extra $1/2$ multiplicative factor in $Y$ allows us to enforce overlaps between cover elements. Also note that the choice of $t$ is difficult to prescribe \emph{a priori}. In our experiments, we found $t:=\log_{10}(\vert V \vert)$ to produce good results. Note from this choice that we utilize different values of $t$ at each reduction step.

In summary, the multiscale reduction step iterates a particular technique of reducing a graph that factors through a process of computing covers and taking nerves, as illustrated in~\autoref{fig:multiscale-nerve}. The reduced graphs and the cover relations are then passed to the hypergraph matching step.

\begin{algorithm}
\caption{Cyclic Block Coordinate Descent for Multiscale Matching}\label{alg:multiscale-matching}
\begin{algorithmic}
\State Initialize $\pi_i,\xi_i$ for all $0\leq i \leq k$, $maxIter \in \N$, $n\gets 0$
\While{$n< maxIter$}
\State // Left-to-right sweep
\For{$i=0,1,\ldots, k$}
    \State // Matrix formulation of co-optimal transport cost
    \State // $\odot$ denotes elementwise square
    \State // $\mathbf{1}_*,\mathbf{1}_\star$ have dimensions needed to match third matrix

    \State $M \gets  (\omega_i^{\odot 2})^T \mu_i \mathbf{1}_*^T +  \mathbf{1}_\star(\mu'_i)^T(\omega'_i)^{\odot 2} - 2 \omega^T \pi_i \omega' $
    
    \State $\xi_i \gets OT(\nu_i,\nu_i',M)$

    \If{$i<k$}
        \State $\pi_{i+1} \gets \xi_i$
    \EndIf
\EndFor
\State // Right-to-left sweep
\For{$i=k,k-1,\ldots, 0$}
    \State // $\mathbf{1}_*,\mathbf{1}_\star$ have new dimensions as needed to match third matrix
    
    \State $M \gets  \omega_i^{\odot 2}\nu_i \mathbf{1}_*^T + \mathbf{1}_\star(\nu'_i)^T((\omega'_i)^{\odot 2})^T - 2 \omega \xi_i (\omega')^T$
    
    \State $\pi_i \gets OT(\mu_i,\mu_i',M)$

    \If{$i>0$}
        \State $\xi_{i-1} \gets \pi_i$
    \EndIf
\EndFor
\State $n \gets n+1$
\EndWhile
\State \Return $\pi_i$ (and optionally $\xi_i$) for all $0\leq i \leq k$
\end{algorithmic}
\end{algorithm}

\paragraph{Step 2: Hypergraph matching}

We now describe the graph matching stage of our method. 
A cover $\mathcal{U}=\{U_i\}_{i\in I}$ of a topological space $X$ defines a hypergraph with nodes given by $X$ and hyperedges given by $\mc{U}$. Thus each cover used in the construction above defines a hypergraph. Consequently, the iterated nerve construction described above provides a sequence of hypergraphs where the hyperedges of one hypergraph comprise the nodes of the next. We will first describe our method for graph matching that utilizes these hypergraphs, and then provide some comparison to alternative methods.

Fix a method for converting a hypergraph to a hypernetwork. Let $G,G'$ be two graphs. We first apply the iterated nerve construction described above for obtaining sequences of hypergraphs, and then the method for passing from hypergraphs to hypernetworks. Let $H_0, H_1, \ldots H_k$ denote the sequence of hypernetworks arising from $G$ (by letting $H_0:=G$), where each $H_i$ represents a tuple $(X_i,\mu_i,Y_i,\nu_i,\omega_i)$. The hyperedges of $H_i$ form the nodes of $H_{i+1}$ for each $0\leq i < k$. Next, let $H'_0,H'_1,\ldots, H'_k$ denote the sequence of hypernetworks arising from $G'$. We then solve a variant of a co-optimal transport problem that: (1) simultaneously returns couplings $\pi_i$ between $\mu_i, \mu'_i$ and $\xi_i$ between $\nu_i,\nu'_i$ for all $i$, and (2) enforces the constraint $\pi_{i+1}=\xi_i$ for all $0\leq i < k$. The $\pi_i$ yield correspondences between graph nodes at multiple scales, and the $\pi_{i+1}=\xi_i$ condition enforces consistency between correspondences at different scales. Whereas standard co-optimal transport \cite{TitouanRedkoFlamary2020} employs a block coordinate descent scheme to alternate between two variables, we employ cyclic block coordinate descent to sweep through $2k$ variables to achieve the described solution. We present our pseudocode in Algorithm \autoref{alg:multiscale-matching}.

We now describe some design considerations that went into our method. An alternative form of multiscale graph matching could be carried out by: (1) iteratively applying any black-box graph simplification method, and (2) applying any black-box graph matching method along each level of simplification. The second step of such a method benefits from being embarrassingly parallel, as the graph matchings across different levels of simplification are independent. As a consequence of this independence, however, such a method is unable to enforce consistency of correspondences across different levels of simplification. In contrast, our method utilizes a careful choice of a graph simplification method that immediately suggests an application of hypergraph matching. While hypergraph matching across different levels of simplification could be solved in a parallel, independent manner, we choose to create dependence by enforcing the $\pi_{i+1}=\xi_i$ condition. Thus we trade off performance (via parallelization) for consistency of matching across scales.

 \begin{figure}[!ht]
    \centering
    \includegraphics[width=\columnwidth,trim={15cm 15cm 15cm 15cm},clip]{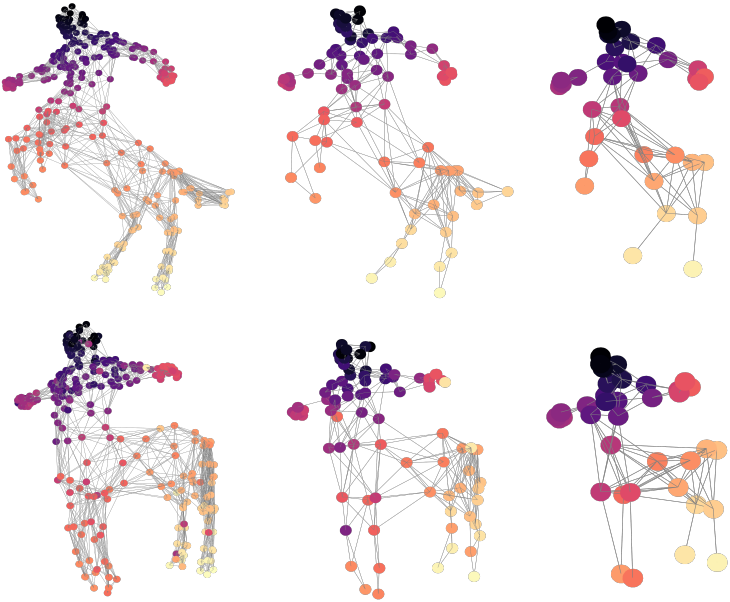}
    \caption{Color transfer via multiscale matching, as described by Algorithm \autoref{alg:multiscale-matching}. Each graph in the top row is displayed with user-defined node colors. For each of these graphs, the colors are transferred to the graph below using the transport matrices returned by Algorithm \autoref{alg:multiscale-matching}. Note that despite a few mistakes (appearing as discontinuities in the color gradient), the transport matrices perform reasonably well in matching semantically related parts.}
    \label{fig:multiscale-matching}
\end{figure}

\begin{example}[multiscale matching on TOSCA graphs]
\label{example:meshes}

We demonstrate the methods described in this section on a pair of Centaur meshes from the TOSCA dataset \cite{bronstein2008numerical}. We discard the mesh faces and retain only the underlying graph structures, which comprise over 15K nodes. In \autoref{fig:multiscale-nerve} we display these graphs as well as the output of the multiscale reduction via iterated nerve graphs. We then perform multiscale matching on these graphs using Algorithm \autoref{alg:multiscale-matching}. We demonstrate the quality of the matching in \autoref{fig:multiscale-matching}. Note that the smaller graphs provide summaries of the overall matching at multiple levels of granularity.

\end{example}

\para{Parameter analysis.}
We use our multiscale matching framework to explore how the choices of parameters ($\mu$, $\nu$, and $\omega$) affect the matching results. In order to have a ground truth to compare to, we permute the labels of nodes in a Centaur graph (from the TOSCA dataset), and perform the hypergraph matching between the original hypergraph and the permuted hypergraph, where the hypergraph structure is obtained via the method described in this subsection. If $(\pi,\xi)$ is a true pair of optimal couplings, then $\pi$ should be induced by the node label permutation; that is, the optimal coupling should match each node in the graph to itself. The node-level matching results with different parameter settings are shown in~\autoref{fig:centaur-self-matching}. For each parameter setting, we compute a putatively optimal node coupling $\pi$ and use it to infer a hard matching from the source to the target by declaring a node $x$ in the source to be matched to a node $y$ in the target if $y = \mathrm{argmax}_{y'} \pi(x,y')$. We then compute the average graph distance between the nodes in the original graph and their matched nodes in the permuted graph---theoretically, this should be zero, but this test measures the effectiveness of gradient descent at finding the true optimal matching under our various parameter regimes. 
We repeat the permutation 100 times.~\autoref{fig:centaur-graph-distances} shows the box plot of the average graph distances for each parameter setting. 
Our results show that defining $\omega$ via Jaccard index or intersection size (c, d, f, and g) performs better than 
 taking $\omega$ to be the incidence matrix (b and e).  
When $\omega$ is Jaccard index or intersection size, the results are fairly comparable, when $\mu$ and $\nu$ are either uniform or proportional to degrees. 
We repeated the above experiments across a few more meshes and obtained similar observations. 
The experimental results in~\autoref{sec:simplify} are generated for $\mu$ and $\nu$ being proportional to degrees, and $\omega$ being the Jaccard index.

 \begin{figure}[!ht]
    \centering
    \includegraphics[width=0.99\columnwidth]{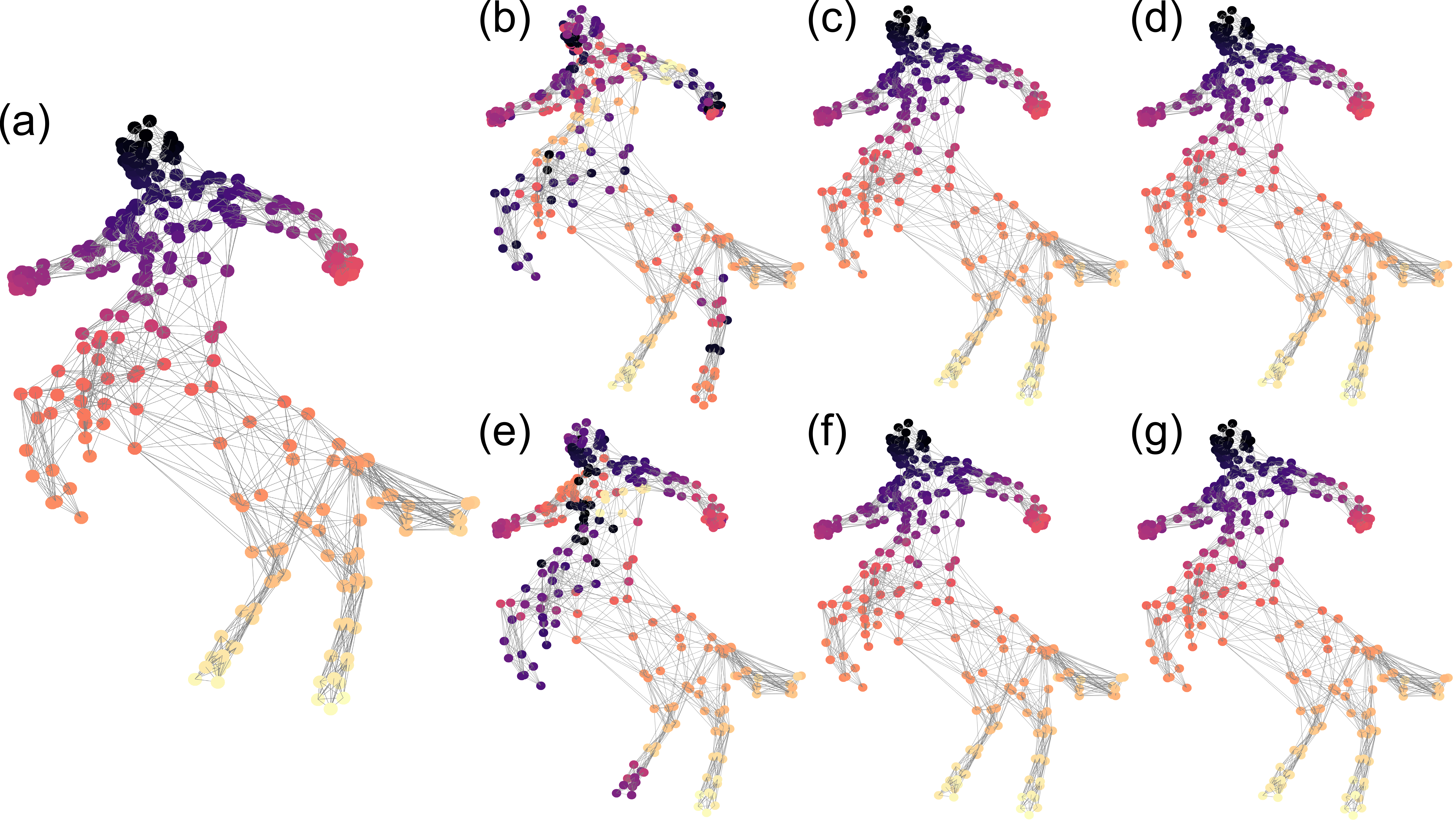}
    \caption{The hypergraph matching between the Centaur graph (a) and a permutation of the Centaur graph  with different parameter settings: (b-d) $\mu$ and $\nu$ being uniform, and $\omega$ being incidence, Jaccard index, and intersection size, respectively; (e-g) $\mu$ being normalized node degree, $\nu$ being normalized sum of node degrees, and $\omega$ being incidence, Jaccard index, and intersection size, respectively.}
    \label{fig:centaur-self-matching}
\end{figure}

\begin{figure}[!ht]
    \centering
    \includegraphics[width=0.99\columnwidth]{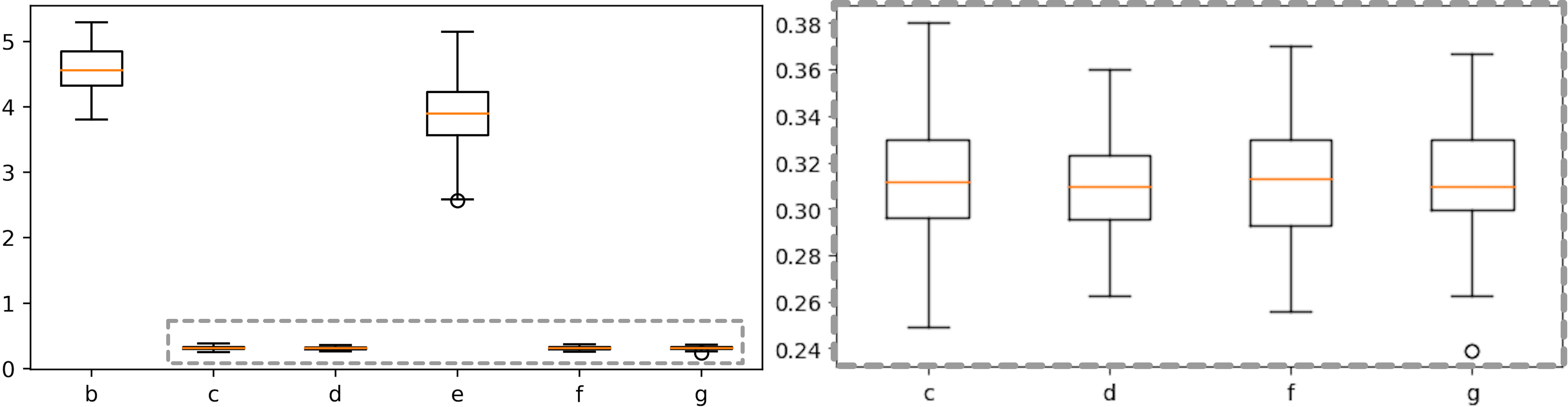}
    \caption{The distribution of average graph distances across 100 permutations for each parameter setting: (b-d) $\mu$ and $\nu$ being uniform, and $\omega$ being incidence, Jaccard index, and intersection size, respectively; (e-g) $\mu$ being normalized node degree, $\nu$ being normalized sum of node degrees, and $\omega$ being incidence, Jaccard index, and intersection size, respectively. }
    \label{fig:centaur-graph-distances}
\end{figure}

\subsection{Measure-Preserving Hypergraph Simplification}
\label{sec:simplify}

In this section, we give examples that capture  changes in hypernetwork distances as we apply multiscale hypergraph simplification based on   the framework of Zhou {\etal}~\cite{ZhouRathorePurvine2022}. 
Visualizing large hypergraphs is a challenging task. 
To reduce visual clutter, we may apply \emph{node collapse} and \emph{edge collapse} to reduce the size of the hypergraph, see Example~\autoref{example:collapses}. 
Zhou \etal~\cite{ZhouRathorePurvine2022} relaxed these notions by allowing nodes to be combined into a \emph{super-node} if they belong to \emph{almost the same} set of hyperedges, and hyperedges to be merged into a \emph{super-hyperedge} if they share \emph{almost the same} set of nodes; these operations are referred to as \emph{node simplification} and \emph{hyperedge simplification}, respectively.  
We follow similar setup from~\cite{ZhouRathorePurvine2022} and focus on  \emph{measure-preserving} hyperedge simplification for our experiments. 

To enable hyperedge simplification, we first convert a hypergraph $H$ into a weighted line graph $\mathsf{L}(H)$, as described above in~\autoref{sec:soft-matching-toy}. 
Each edge in $\mathsf{L}(H)$ is weighted by the multiplicative inverse of the Jaccard index of its end points. 
We compute a minimum spanning tree of $\mathsf{L}(H)$, denoted as $T_{\mathsf{L}(H)}$.  
We then perform a topological simplification of $H$ based on $T_{\mathsf{L}(H)}$. 
In particular, we sort edges in $T_{\mathsf{L}(H)}$ monotonically by increasing weights, $\{e_1, \cdots, e_k\}$.
We then simplify the hypergraph $H$ across multiple scales by merging hyperedges connected by edge $e_i \in T_{\mathsf{L}(H)}$ for each $i$, as $i$ increases from $1$ to $k$. 
To enable node simplification, we convert a hypergraph $H$ into a weighted line graph of its dual (i.e., a \emph{clique expansion}) and apply a similar procedure. 
We use the same visual encodings from Example~\autoref{example:collapses}, where super-nodes are visualized by concentric ring glyphs and super-hyperedge are visualized by pie-chart glyphs.

We utilize the co-optimal transport metric $d_{\mathcal{H},2}$ as a heuristic to determine simplification levels of interest---in prior work, simplification levels were tuned in a more ad hoc manner. The basic idea is to measure the distance at each simplification step to the original hypergraph, in order to find ``elbows" at simplification levels of interest. We now explain details of the co-optimal transport heuristic in the context of a real-world dataset.

\begin{example}[ActiveDNS Hyperedge  Simplification]
\label{example:active-DNS}

Our first example arises from Domain Name System (DNS), a naming database in which internet domain names are located and translated into IP addresses.
In domain aliasing, multiple domains may map to the same IP address. 
A single domain can be assigned to multiple servers for web hosting services, which correspond to multiple IP addresses.
We study a subset of DNS records, from April 26, 2018, as part of the ActiveDNS dataset~\cite{kountouras2016enabling}. 
We explore the relationship between domains and IP addresses via an ActiveDNS hypergraph $H_0$, where nodes are IP addresses and hyperedges are domains. 

\begin{figure}[!ht]
	\centering
	\vspace{-2mm}
	\includegraphics[width=0.8\columnwidth]{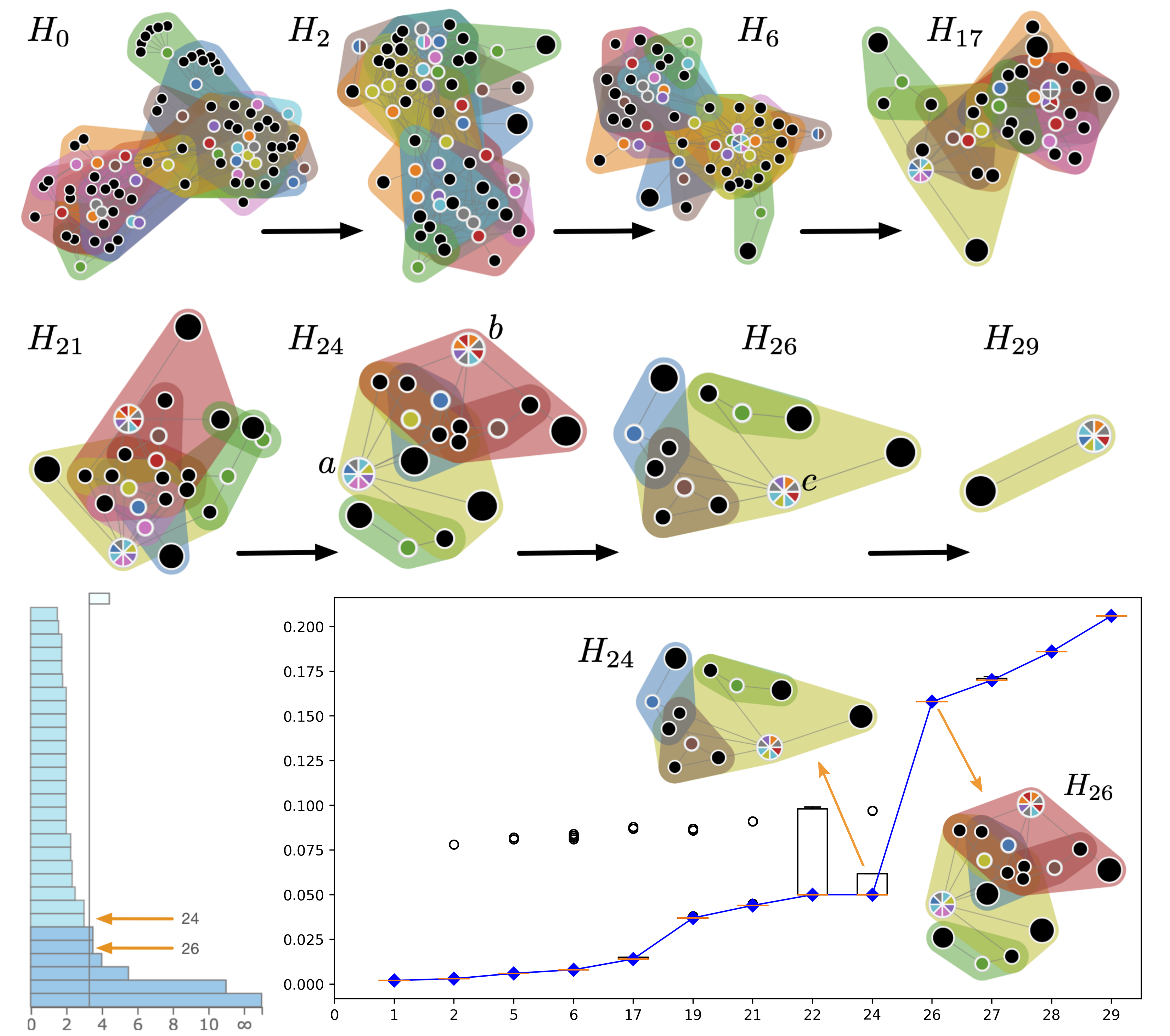}
	\caption{Multiscale simplification of an ActiveDNS hypergraph via hyperedge simplification, together with changes in hypernetwork distances.}
	\label{fig:DNS-plot}
\end{figure}

This dataset has been explored as a hypergraph in~\cite{ZhouRathorePurvine2022}, where we apply a multiscale hyperedge simplification  and obtain a sequence of simplified hypergraphs $H_1, \dots, H_{29}$ using the procedure described above, some of which are shown in \autoref{fig:DNS-plot}. 
We now model each $H_i$ ($0 \leq i \leq 29$) as a hypernetwork---specifically, we set $\mu$ to be normalized node degree, $\nu$ to be normalized sum of node degree and $\omega$ to be the incidence function (see \autoref{sec:soft-matching-toy}). 
We compute the hypernetwork distances $d_{\mathcal{H},2}(H_0, H_i)$ between $H_0$ and $H_i$, shown in~\autoref{fig:DNS-plot}. 
To escape from (undesirable) local minima while optimizing the distance, we use random initialization for the coupling matrices $\pi$ and $\xi$.
For each simplification step, we compute the distance multiple times and plot its distribution using a box plot. We connect the minimum distance values across simplification steps with a line plot.  
In the multiscale hypergraph simplification framework, edges in $T_{\mathsf{L}(H)}$ with the same weight value will be merged together simultaneously; thus we only plot the box plot at unique weight values.

As expected, $d_{\mathcal{H},2}(H_0, H_i)$ is shown in the line plot to increase as we simplify the hypergraph across multiple scales. 
To determine the appropriate simplification threshold, we use the ``elbow method'', meaning that we are interested in the simplification step where there is a sharp increase in distance.  
The elbow of the $d_{\mathcal{H},2}(H_0, H_i)$ curve is at $H_{24}$.  
A closer inspection reveals that $H_{24}$ is well-aligned with a simplified hypergraph manually tuned by an expert from~\cite{ZhouRathorePurvine2022}. 
In particular, $H_{24}$ shows a clear separation into two hyperedges $a$ and $b$ (shown with pie chart glyphs), each of which is the result of merging domains registered by the same organization. 
In particular, hyperedge $a$ arises from merging $14$ domains that are variants of 
“World’s Leading Cruise Lines” registered to Carnival Corporation. 
Hyperedge $b$ is the result of merging, for the most part, $7$ domains registered to Radio Free Europe.  
$a$ and $b$ then merge into hyperedge $c$ in $H_{26}$. 
This example provides initial evidence that the hypergraph distance $d_{\mathcal{H},2}$ may be used to quantify information loss during hypergraph simplification, thus supporting parameter tuning. 
\end{example}

\begin{example}[Coauthor Network Hyperedge Simplification]
\label{example:coauthor}

In the second example, we study a co-authorship  dataset from ArnetMiner \cite{TangZhangYao2008} from authors in the field of  information-retrieval.
We model the dataset as a hypergraph where each hyperedge represents an author, and nodes in a hyperedge are researchers who have coauthored at least one paper with the corresponding author. 
\autoref{fig:coauthor} shows the original hypergraph $H_0$ and the multiscale simplification results. 
We identify step 217 as the elbow point. 
The simplified hypergraph $H_{217}$ shows that David A. Grossman and Ophir Frieder are in two different super-hyperedges, who are the authors of the book “Information Retrieval: Algorithms and Heuristics”, an important introductory textbook in the field. In a subsequent simplified hypergraph ($H_{298}$), the two super-hyperedges merge into one.

\begin{figure}[!ht]
	\centering
	\vspace{-2mm}
	\includegraphics[width=0.8\columnwidth]{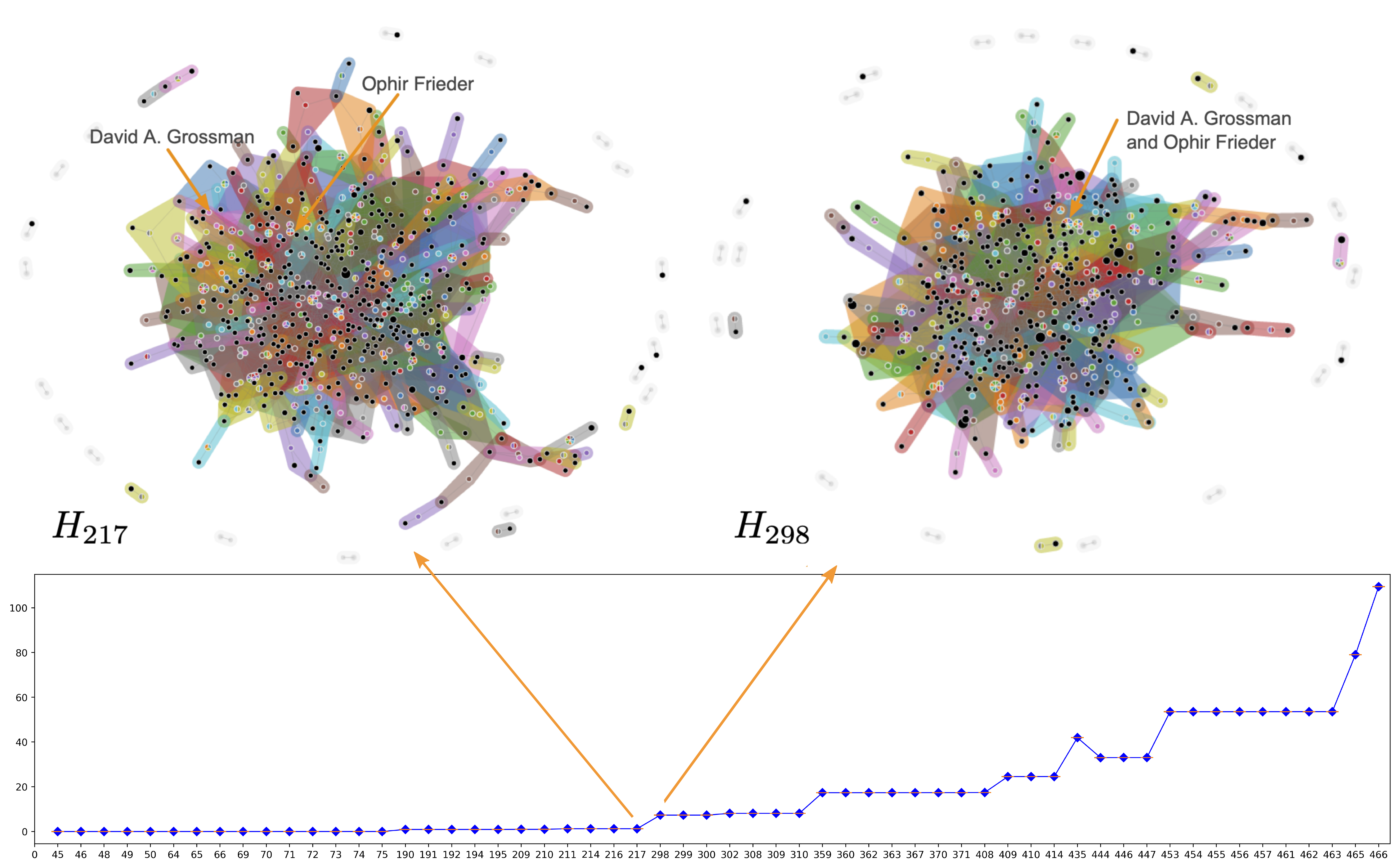}
	\caption{multiscale simplification of a coauthor network hypergraph via hyperedge   simplification, together with changes in hypernetwork distances.}
	\label{fig:coauthor}
\end{figure}
\end{example}

\begin{example}[Les Mis\'{e}rables Node  Simplification]
\label{example:LesMis}
In the final example, we consider Victor Hugo’s novel Les Mis\'{e}rables from the Stanford Graph Base~\cite{Knuth1994}. 
A set of characters are found within each volume, book, chapter, and scene of the story. 
Following the work in~\cite{ZhouRathorePurvine2022}, we form our hypergraph by considering each character to be a node and each (volume, book)-pair to be a hyperedge that contains  characters that appear within. 
We apply multiscale node simplification of this dataset and identify the hypergraph at step $62$ ($H_{62}$) as the elbow point in the $d_{\mathcal{H},2}$ curve, see~\autoref{fig:LesMis-plot}. 
$H_{62}$ is also the one identified in~\cite{ZhouRathorePurvine2022} as the manually tuned result, where three super-vertices are identified: $v_1$ contains many peripheral characters in the first volume who interact with Jean Valjean and Fantine; $v_2$ contains all of the “Friends of the ABC” revolutionary student group, and $v_3$ contains Jean Valjean, Cosette, Javert, Marius, and all of Marius’ family. 

\begin{figure}[!ht]
	\centering
	\vspace{-2mm}
	\includegraphics[width=0.8\columnwidth]{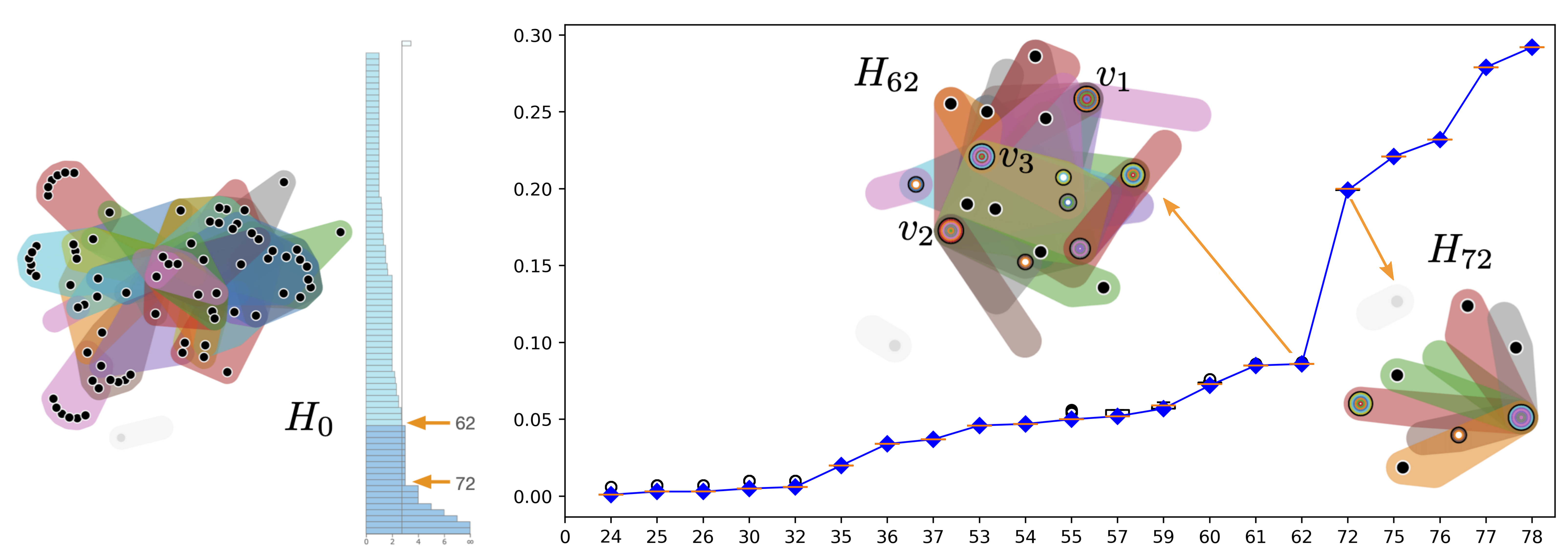}
	\caption{Multiscale simplification of a Les Mis\'{e}rables hypergraph via node  simplification, together with changes in hypernetwork distances.}
	\label{fig:LesMis-plot}
\end{figure}
\end{example}

An analysis of the runtime for these experiments is provided in the Appendix~\ref{sec:runtime_analysis}.

\section{Conclusion}
\label{sec:conclusion}

We view this work to be the first in a landscape of new research directions in hypergraph data analysis. In analogy with the measure network setting, we expect that future work could further develop the geometry of hypergraph space, and utilize this study to obtain methods for carrying out geometric statistics (e.g. Fr\'{e}chet means) on this space. 
From the applied perspective, we expect that future work could focus on applications to machine learning problems as well as scalability and deployment into deep learning pipelines. Integrating GW distance computation as part of the hypergraph visual analytic tool~\cite{ZhouRathorePurvine2022} would be of interest as well.
Finally, the study of morphisms between network and hypernetwork categories initiated in this work suggests new questions about obtaining families of Lipschitz maps and further understanding existing graphification methods from a categorical perspective. For example, a long term project would be to classify all Lipschitz, functorial graphifications satisfying reasonable axioms---Proposition \ref{prop:structural_functor} gives a first step in this direction. It will also be interesting to consider metric and categorical aspects of maps from networks to hypernetworks, or multiscale versions thereof, building on the already substantial literature on the category theory of data partitioning.

\backmatter

\bmhead{Acknowledgments}
This work was partially supported by NSF DMS 2107808, NSF IIS 1910733, NSF IIS 214549 and DOE DE-SC0021015.  




\bibliography{GW-refs.bib}
\nocite{label}

\appendix
\section{Appendix}
\label{sec:detailed-proofs}

\subsection{Proofs from  \autoref{sec:properties}}\label{sec:proofs_properties}

To prove~\autoref{thm:pseudometric}, we begin with a technical result.  

\begin{lemma}
\label{lemma:realized}
The infimum in the definition of $d_{\mathcal{H},p}$ is realized.
\end{lemma}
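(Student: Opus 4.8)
The plan is to exhibit a minimizer of the distortion functional $\mathrm{dis}_p^{\mathcal{H}}$ over the space of pairs of couplings $\mathcal{C}(\mu,\mu')\times\mathcal{C}(\nu,\nu')$, by combining weak compactness of that space with continuity (when $p<\infty$) or lower semicontinuity (when $p=\infty$) of the functional. This is the two-coupling analogue of the existence argument for Gromov--Wasserstein distance in \cite{ChowdhuryMemoli2019,dghlp-focm}.

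First I would record compactness. Since $\mu,\mu'$ are Borel probability measures on Polish spaces, they are tight, and from the bound $\pi\big((K\times K')^{c}\big)\le\mu(K^{c})+\mu'((K')^{c})$ one sees that $\mathcal{C}(\mu,\mu')$ is uniformly tight; it is also weakly closed, since the marginal conditions are preserved under weak limits, so by Prokhorov's theorem it is weakly compact. The same holds for $\mathcal{C}(\nu,\nu')$, hence $\mathcal{C}(\mu,\mu')\times\mathcal{C}(\nu,\nu')$ is compact in the product topology, on which sequential convergence is componentwise; moreover $(\pi_n,\xi_n)\to(\pi,\xi)$ implies $\pi_n\otimes\xi_n\rightharpoonup\pi\otimes\xi$ (a standard fact, verified by testing against products $f\otimes g$ of bounded continuous functions).

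The crux is continuity of $(\pi,\xi)\mapsto\mathrm{dis}_p^{\mathcal{H}}(\pi,\xi)=\|\omega-\omega'\|_{L^{p}(\pi\otimes\xi)}$ for $p<\infty$, which is delicate because $\omega,\omega'$ are only assumed measurable. The key observation is that for \emph{every} $(\pi,\xi)\in\mathcal{C}(\mu,\mu')\times\mathcal{C}(\nu,\nu')$ and every measurable $h$ of the $(x,y)$-coordinates alone one has $\|h\|_{L^{p}(\pi\otimes\xi)}=\|h\|_{L^{p}(\mu\otimes\nu)}$, because the $(x,y)$-marginal of $\pi\otimes\xi$ is always $\mu\otimes\nu$. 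Choosing bounded continuous $\tilde\omega,\tilde\omega'$ with $\|\omega-\tilde\omega\|_{L^{p}(\mu\otimes\nu)}<\varepsilon$ and $\|\omega'-\tilde\omega'\|_{L^{p}(\mu'\otimes\nu')}<\varepsilon$ (possible since bounded continuous functions are dense in $L^{p}$ of a finite Borel measure on a Polish space), the triangle inequality yields $\big|\,\mathrm{dis}_p^{\mathcal{H}}(\pi,\xi)-\|\tilde\omega-\tilde\omega'\|_{L^{p}(\pi\otimes\xi)}\,\big|<2\varepsilon$ \emph{uniformly} in $(\pi,\xi)$. Since $(\pi,\xi)\mapsto\|\tilde\omega-\tilde\omega'\|_{L^{p}(\pi\otimes\xi)}$ is weakly continuous (the integrand $|\tilde\omega-\tilde\omega'|^{p}$ being bounded and continuous), $\mathrm{dis}_p^{\mathcal{H}}$ is a uniform limit of continuous functions, hence continuous, so it attains its infimum $d_{\mathcal{H},p}(H,H')$ on the compact set.

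Finally, for $p=\infty$ I would use that $\|f\|_{L^{p}(\rho)}\uparrow\|f\|_{L^{\infty}(\rho)}$ as $p\to\infty$ for any probability measure $\rho$; taking $\rho=\pi\otimes\xi$ shows $\mathrm{dis}_\infty^{\mathcal{H}}=\sup_{p<\infty}\mathrm{dis}_p^{\mathcal{H}}$ pointwise on $\mathcal{C}(\mu,\mu')\times\mathcal{C}(\nu,\nu')$, a supremum of continuous functions and hence lower semicontinuous, which still suffices for the infimum over the compact set to be attained. I expect the measurability point in the previous paragraph to be the only real obstacle; the compactness and the $p=\infty$ reduction are routine, and the argument is a faithful transcription of the Gromov--Wasserstein case with $\pi\otimes\pi$ replaced by the product of distinct couplings $\pi\otimes\xi$.
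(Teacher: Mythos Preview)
Your proposal is correct and follows essentially the same route as the paper's proof: weak compactness of $\mathcal{C}(\mu,\mu')\times\mathcal{C}(\nu,\nu')$, uniform approximation of $\mathrm{dis}_p^{\mathcal{H}}$ by continuous functionals built from bounded continuous surrogates for $\omega,\omega'$ (with the marginal identity $\|h\|_{L^p(\pi\otimes\xi)}=\|h\|_{L^p(\mu\otimes\nu)}$ supplying the uniformity), and the $p=\infty$ case handled as a supremum of continuous functions. The paper's argument is organized identically; you are slightly more explicit about Prokhorov, about $\pi_n\otimes\xi_n\rightharpoonup\pi\otimes\xi$, and about the marginal identity, but there is no substantive difference.
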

\begin{proof}
For a Polish space $X$, let $\mathrm{Prob}(X)$ denote the set of Borel probability measures over $X$, endowed with the weak topology.

Let $H = (X,\mu,Y,\nu,\omega), H' = (X',\mu',Y',\nu',\omega') \in \mathcal{H}$. As $X$ and $X'$ are Polish spaces and $\mu$ and $\mu'$ are Borel probability measures, we have that $\mathcal{C}(\mu,\mu')$ is compact in $\mathrm{Prob}(X\times X')$, by \cite[Lemma 10]{ChowdhuryMemoli2019}. It follows that $\mathcal{C}(\mu,\mu') \times \mathcal{C}(\nu,\nu')$ is compact in $\mathrm{Prob}(X\times X') \times \mathrm{Prob}(Y \times Y')$. 

Now we will show that the distortion functional $\mathrm{dis}_p^{\mathcal{H}} = \mathrm{dis}_{H,H',p}^\mathcal{H}$ is continuous on $\mathcal{C}(\mu,\mu') \times \mathcal{C}(\nu,\nu')$, beginning with the $1\leq p < \infty$ case. Since we are working in Polish spaces with finite measures, we have that bounded continuous functions are dense.  So for every $n\in\mathbb{N}$, we can choose continuous bounded functions $\omega_n\in L^p(\mu\otimes\nu)$ and $\omega'_n\in L^p(\mu'\otimes\nu')$ such that the following holds for all $x\in X, y\in Y, x'\in X',$ and $ y'\in Y'$:
\[
\|\omega(x,y)-\omega_n(x,y)\|_{L^p(\mu\otimes\nu)} \leq \frac{1}{n} \quad \mbox{and} \quad \|\omega'(x',y')-\omega'_n(x',y')\|_{L^p(\mu'\otimes\nu')} \leq \frac{1}{n}.
\]
Then for each $n\in \mathbb{N}$ we define a $n$-distortion functional $\mathrm{dis}_p^n: \mathcal{C}(\mu,\mu')\times \mathcal{C}(\nu,\nu')\longrightarrow \mathbb{R}_+$ as $\mathrm{dis}_p^n(\pi,\xi):=  \|\omega_n(x,y) - \omega'_n(x',y') \|_{L^p(\pi\otimes\xi)}$ 

Now let us show that the $n$-distortion functional is continuous.  Let $(\pi,\xi)\in\mathcal{C}(\mu,\mu')\times \mathcal{C}(\nu,\nu')$ and let $(\pi_m)_{m\in\mathbb{N}}$ and $(\xi_m)_{m\in\mathbb{N}}$ be sequences in $\mathcal{C}(\mu,\mu')$ and $\mathcal{C}(\nu,\nu')$ that converge to $\pi$ and $\xi$ with respect to the weak topology. Then
\begin{align}
    &\lim_{m\rightarrow\infty}\text{dis}_p^n(\pi_m,\xi_m) \\
    &= \lim_{m\rightarrow\infty}\left(\int_{Y\times Y'}\int_{X\times X'} \lvert \omega_n(x,y) - \omega'_n(x',y')  \rvert^p \pi_m (dx \times dx')\xi_m(dy \times dy')\right)^{\frac{1}{p}}  \\
    &= \left(\int_{Y\times Y'}\int_{X\times X'} \lvert \omega_n(x,y) - \omega'_n(x',y') \rvert^p\pi (dx \times dx')\xi(dy \times dy')\right)^{\frac{1}{p}} = \text{dis}_p^n(\pi,\xi).
\end{align}
So we have that $\text{dis}_p^n$ is sequentially continuous. As $\mathcal{C}(\mu,\mu')\times \mathcal{C}(\nu,\nu') \subseteq$ Prob$(X\times X'\times Y\times Y')$, it is a metrizable space \cite[Remark 5.1.1]{ambrosio2005gradient}. Therefore, as $\text{dis}_p^n$ is sequentially continuous in a metrizable space, it is also continuous. 

Now let us see that $\text{dis}_p^n$ converges to $\text{dis}^\mathcal{H}_p$ uniformly.  Let $(\pi,\xi)\in\mathcal{C}(\mu,\mu')\times \mathcal{C}(\nu,\nu')$ and observe
\begin{align*}
    & \lvert \text{dis}_p^{\mathcal{H}}(\pi,\xi) - \text{dis}_p^n(\pi,\xi) \rvert  \\
    &= \lvert \|\omega(x,y) - \omega'(x',y') \|_{L^p(\pi\otimes\xi)}-\|\omega_n(x,y) - \omega'_n(x',y') \|_{L^p(\pi\otimes\xi)} \rvert \\
    & \leq \|\omega(x,y) - \omega'(x',y') - \omega_n(x,y) + \omega'_n(x',y') \|_{L^p(\pi\otimes\xi)}  \\
    &\leq \|\omega(x,y)-\omega_n(x,y)\|_{L^p(\mu\otimes\nu)} +\|\omega'(x',y')-\omega'_n(x',y')\|_{L^p(\mu'\otimes\nu')} \leq \frac{2}{n}.
\end{align*}
Since $\mathrm{dis}_p^{\mathcal{H}}$ is the uniform limit of continuous functions, it is also continuous. It follows that the infimum in \eqref{eqn:distance} is realized with a minimal coupling.

For the case in which $p=\infty$, let $(\pi,\xi)$ be couplings in $\mathcal{C}(\mu,\mu')\times \mathcal{C}(\nu,\nu')$. As the domain of $\mathrm{dis}_p$, $\mathcal{C}(\mu,\mu')\times \mathcal{C}(\nu,\nu')$, is a subset of Prob$(X\times X'\times Y\times Y')$, Jensen's Inequality shows that if $1\leq p\leq q\leq \infty$, then we have that $\mathrm{dis}_p^{\mathcal{H}} \leq \mathrm{dis}_q^{\mathcal{H}}$. Also note that $\lim_{p\rightarrow\infty} \mathrm{dis}_p^{\mathcal{H}}(\pi,\xi) =\mathrm{dis}_\infty^{\mathcal{H}}(\pi,\xi)$. Then as for $p\in [1,\infty), \mathrm{dis}_p^{\mathcal{H}}$ is continuous, and $\mathrm{dis}_{\infty}^{\mathcal{H}} = \sup\{\mathrm{dis}_p^{\mathcal{H}} \: \vert \: p  \in [1,\infty) \}$ we have that $\mathrm{dis}_\infty^{\mathcal{H}}$ is lower semicontinuous. As $\mathrm{dis}_p^\mathcal{H}$ is at least lower semicontinuous over a compact domain, we have that the infimum in \eqref{eqn:distance} is realized with a minimal coupling for any $p\in[1,\infty]$.
\end{proof}

\begin{proof}[Proof of Theorem \ref{thm:pseudometric}.]

In the following, let $H=(X,\mu,Y,\nu,\omega)$,  $H'=(X',\mu',Y',\nu',\omega')$ and $H'' = (X'',\mu'',Y'',\nu'',\omega'')$ be measure hypernetworks.

\para{Pseudometric.} Symmetry is obvious. For the triangle inequality, let $(\pi_{12},\xi_{12})$ and $(\pi_{23},\xi_{23})$ be  couplings realizing $d_{\mathcal{H},p}(H,H')$ and $d_{\mathcal{H},p}(H',H'')$, respectively (see Lemma \ref{lemma:realized}). Then, using the gluing lemma (see, e.g., \cite[Lemma 1.4]{sturm2012space}), we construct  a probability measure $\pi$ on $X \times X' \times X''$ such that the $X \times X'$ marginal of $\pi$ is $\pi_{12}$, the $X' \times X''$ marginal is $\pi_{23}$ and the $X \times X''$ marginal is a coupling of $\mu$ and $\mu''$, which we denote $\pi_{13}$. Likewise, we construct a probability measure $\xi$ on $Y \times Y' \times Y''$ with analogous properties---in particular, the $Y \times Y''$ marginal is denoted $\xi_{13}$. Then, using the various marginal conditions, we have
\begin{align*}
    &d_{\mathcal{H},p}(H,H'') \\
    &\leq \mathrm{dis}_p^\mathcal{H}(\pi_{13},\xi_{13}) \\
    &=\|\omega -\omega''\|_{L^p(\pi\otimes\xi)}\\
    &\leq \|\omega -\omega'\|_{L^p(\pi\otimes\xi)}+\|\omega'-\omega''\|_{L^p(\pi\otimes\xi)}\\
    &=\|\omega -\omega'\|_{L^p(\pi_{12}\otimes\xi_{12})}+\|\omega'-\omega''\|_{L^p(\pi_{23}\otimes\xi_{23})} = d_{\mathcal{H},p}(H,H')+d_{\mathcal{H},p}(H',H'').
\end{align*}

\para{Complete.} We prove completeness by adapting the proof of \cite[Theorem 5.8]{sturm2012space}. Let $H_n = (X_n,\mu_n,Y_n,\nu_n,\omega_n)$ be a Cauchy sequence of measure hypernetworks. Then we can choose couplings $\pi_n \in \mathcal{C}(\mu_{n},\mu_{n+1})$ and $\xi_n \in \mathcal{C}(\nu_n,\nu_{n+1})$ such that $\mathrm{dis}_p^{\mathcal{H}}(\pi_n,\xi_n) \to 0$. We then apply the generalized Gluing Lemma \cite[Lemma 1.4]{sturm2012space} to $\pi_1,\ldots,\pi_{N-1}$ (respectively, to $\xi_1,\ldots,\xi_{N-1}$) to construct a measure $\Pi_N$ on $\prod_{n=1}^N X_n$ (respectively, $\Xi_N$ on $\prod_{n=1}^N Y_N$). We now define $\overline{\mu}$ to be the projective limit $\varprojlim \Pi_N$, a well-defined probability measure on $\prod_{n=1}^\infty X_n$ (e.g., \cite{bochner2020harmonic}). Set $\overline{X} := \mathrm{supp}(\overline{\mu})$; then $\overline{X}$ is a closed subspace of a countable product of Polish spaces and is therefore Polish. Similarly, define $\overline{\nu} := \varprojlim \Xi_N$ and $\overline{Y}:= \mathrm{supp}(\overline{\nu})$. Next we define $\Omega_N: \overline{X} \times \overline{Y} \to \Rspace$ by
\[
\Omega_N((x_i)_{i=1}^\infty,(y_i)_{i=1}^\infty) = \omega_N(x_N,y_N).
\]
The sequence $(\Omega_N)_{N=1}^\infty$ is a Cauchy sequence in the Banach space $L^p(\overline{X} \times \overline{Y}, \overline{\mu} \otimes \overline{\nu})$, since
\[
\|\Omega_N - \Omega_{N+1}\|_{L^p(\overline{\mu} \otimes \overline{\nu})} = \mathrm{dis}_p^{\mathcal{H}}(\pi_N,\xi_N) \to 0,
\]
and we define $\overline{\omega}$ to be its limit. Each $\omega_N$ is essentially bounded, so it follows that $\overline{\omega}$ is as well. Setting $\overline{H}:= (\overline{X},\overline{\mu},\overline{Y},\overline{\nu},\overline{\omega})$, we have $\overline{H} \in \mathcal{H}$ and
\[
d_{\mathcal{H},p}(H_n,\overline{H}) \leq \|\omega_n - \overline{\omega}\|_{L^p(\overline{\mu} \otimes \overline{\nu})} \to 0.
\]

\para{Geodesic.} Finally, we show that the metric on $\mathcal{H}$ modulo weak isomorphism is geodesic. For $H \in \mathcal{H}$, let $[H]$ denote its weak isomorphism class. We will construct an explicit geodesic between any two weak isomorphism classes of hypernetworks $[H]$ and $[H']$, following the main idea of the proof of \cite[Theorem 3.1]{sturm2012space}. Let $\pi \in \mathcal{C}(\mu,\mu')$ and $\xi \in \mathcal{C}(\nu,\nu')$ be optimal couplings (which exist, by Lemma \ref{lemma:realized}). Consider the path $\gamma:[0,1] \to \mathcal{H}$ defined by 
\begin{equation}\label{eqn:geodesic}
\gamma(t) = \left(\mathrm{supp}(\pi), \pi, \mathrm{supp}(\xi), \xi, \omega_t\right),
\end{equation}
where
\[
\omega_t((x,x'),(y,y')) = (1-t)\omega(x,y) + t \omega'(x',y').
\]
Observe that $\gamma(0) \in [H]$ and $\gamma(1) \in [H']$; indeed, coordinate projection maps $\phi:X \times X' \to X$ and $\psi:Y \times Y' \to Y$ define a basic weak isomorphism $\gamma(0) \to H$ and a similar construction works for $\gamma(1) \to H'$. To show that $\gamma$ defines a geodesic in $[\mathcal{H}]$, it suffices to show that for all $0 \leq s \leq t \leq 1$,
\begin{equation}\label{eqn:geodesic_ineq}
d_{\mathcal{H},p}(\gamma(s),\gamma(t)) \leq (t-s) d_{\mathcal{H},p}(H,H')
\end{equation}
(see, e.g., \cite[Lemma 1.3]{chowdhury2018explicit}). One can show by an elementary computation that 
\[
\mathrm{dis}_{\gamma(s),\gamma(t),p}^{\mathcal{H}}(\pi,\xi)^p \leq (t-s)^p \mathrm{dis}_{H,H',p}^{\mathcal{H}}(\pi,\xi)^p
\]
for any pair of couplings $(\pi,\xi)$---indeed, this holds because linear interpolations are geodesics in $L^p$ spaces---and \eqref{eqn:geodesic_ineq} follows.

\end{proof}

\begin{remark}\label{rmk:gluing_triangle_inequality}
The proof of the triangle inequality above uses the notion of gluing, which appears in our category theoretic framework in  \autoref{sec:graphification}. The coupling $\pi_{13}$ constructed above is written as $\pi_{13} = \pi_{23} \bullet \pi_{13}$ in the notation from \autoref{sec:graphification}. A byproduct of our proof is the following useful fact: for measure networks $N,N',N''$ and couplings $\pi \in \mathcal{C}(\mu,\mu')$ and $\pi' \in \mathcal{C}(\mu',\mu'')$, we have
\[
\mathrm{dis}_p^\mathcal{N}(\pi' \bullet \pi) \leq \mathrm{dis}_p^\mathcal{N}(\pi') + \mathrm{dis}_p^\mathcal{N}(\pi).
\]
A similar statement holds in the measure hypernetwork setting.
\end{remark}

\subsection{Proofs of Technical Lemmas from Section \ref{sec:clique_expansion}}\label{sec:proofs_clique}

\begin{proof}[Proof of Lemma \ref{lem:weak_iso_reps}]
We will prove the statement for hypernetworks, as the network case is similar. Let $\pi$ and $\xi$ be couplings realizing $d_{\mathcal{H},p}(H,H')$ (such couplings always exist, see Lemma \ref{lemma:realized} in the appendix). Define
\[
\overline{X} = \mathrm{supp}(\pi) \subset X \times X', \quad \overline{\mu} = \pi\mid_{\overline{X}}, \quad \overline{Y} = \mathrm{supp}(\xi) \subset Y \times Y', \quad \overline{\nu} = \xi\mid_{\overline{Y}}
\]
and define hypernetwork functions $\overline{\omega},\overline{\omega}':\overline{X} \times \overline{Y} \to \Rspace$ as follows. Let $p_Z:X \times X' \to Z$ be coordinate projection for $Z \in \{X,X'\}$ and let $p_Z:Y \times Y' \to Z$ be coordinate projection for $Z \in \{Y,Y'\}$. The hypernetwork functions are then given by
\[
\overline{\omega} = \omega \circ (p_X \times p_Y) \qquad \mbox{and} \qquad \overline{\omega}' = \omega' \circ (p_{X'} \times p_{Y'}).
\]
The coordinate projections are measure-preserving, due to the marginal constraints of $\pi$ and $\xi$, and the pair $(p_X,p_Y)$ (respectively, $(p_{X'},p_{Y'})$) therefore defines a basic weak isomorphism (Definition \ref{def:basic-weak-iso}) from $\overline{H}$ to $H$ (respectively, $\overline{H'}$ to $H'$). 

To see \eqref{eqn:weak_iso_reps}, observe first that $d_{\mathcal{H},p}(H,H') = d_{\mathcal{H},p}(\overline{H},\overline{H'})$ follows by weak isomorphism. To see the other equality for $p \in [1,\infty)$, we have
\begin{align*}
    d_{\mathcal{H},p}(H,H')^p &= \int_{Y \times Y'} \int_{X \times X'} \lvert \omega(x,y) - \omega'(x',y')\rvert^p \; \pi(dx \times dx') \xi(dy \times dy') \\
    &= \int_{\overline{Y}} \int_{\overline{X}} \lvert \omega(x,y) - \omega'(x',y')\rvert^p \; \overline{\mu}(dx \times dx') \overline{\nu}(dy \times dy'), \\
\end{align*}
since the value of the integral isn't changed by replacing the full spaces with supports of the measures. Using the change of variables $\overline{x} = (x,x') \in \overline{X}$ and $\overline{y} = (y,y') \in \overline{Y}$, the above is equal to 
\[
\int_{\overline{Y}} \int_{\overline{X}} \lvert \overline{\omega}(\overline{x},\overline{y}) - \overline{\omega}'(\overline{x},\overline{y})\rvert^p \; \overline{\mu}(d\overline{x}) \overline{\nu}(d\overline{y}) = \|\overline{\omega} - \overline{\omega}'\|_{L^p(\overline{\mu} \otimes \overline{\nu})}^p.
\]
For the $p = \infty$ case, a similar change of variables gives
\begin{align*}
    d_{\mathcal{H},\infty}(H,H') &= \inf \{c \mid \lvert\omega(x,y) - \omega'(x',y')\rvert \leq c \mbox{ for $\pi \otimes \xi$-almost every $(x,y,x',y')$}\} \\
    &= \inf \{c \mid \lvert\overline{\omega}(\overline{x},\overline{y}) - \overline{\omega}'(\overline{x},\overline{y})\rvert \leq c \mbox{ for $\overline{\mu} \otimes \overline{\xi}$-almost every $(\overline{x},\overline{y})$}\} \\
    &= \|\overline{\omega} - \overline{\omega}'\|_{L^\infty(\overline{\mu} \times \overline{\nu})}.
\end{align*}
\end{proof}

\begin{proof}[Proof of Lemma \ref{lem:well_defined}]
By Proposition \ref{prop:weak_isomorphism}, it suffices to show that if there is a basic weak isomorphism $(\phi,\psi)$ from $H$ to $H'$, then $\phi$ defines a basic weak isomorphism of $N := \mathsf{Q}_q(H) = (X,\mu,\omega_{\mathsf{Q}_q})$ and $N' := \mathsf{Q}_q(H') = (X',\mu',\omega_{\mathsf{Q}_q}')$. By an argument similar to the one used to prove functoriality in Theorem \ref{thm:p_clique}, the assumption that $(\phi,\psi)$ is a basic weak isomorphism implies that for $\mu \otimes \mu \otimes \nu$-almost every $(x_1,x_2,y) \in X \times X \times Y$, $\min_j \omega(x_j,y) = \min_j \omega'(\phi(x_j),\psi(y))$. In the $q < \infty$ case, we deduce that for $\mu \otimes \mu$ almost every $(x_1,x_2)$, 
\begin{align}
\omega_{\mathsf{Q}_q}'(\phi(x_1),\phi(x_2))^p  &= \int_{Y'} \min_j \omega'(\phi(x_j),y')^p \; \nu' (dy') \nonumber \\
&= \int_{Y} \min_j \omega'(\phi(x_j),\psi(y))^p \; \nu (dy) \label{eqn:change_of_variables}\\
&= \int_{Y} \min_j \omega(x_j,y)^p \; \nu (dy) \nonumber \\
&= \omega_{\mathsf{Q}_q}(x_1,x_2)^p, \nonumber
\end{align}
where \eqref{eqn:change_of_variables} follows by the change-of-variables formula (using that $\psi$ is measure-preserving). The $q=\infty$ case follows by a limiting argument.
\end{proof}

\begin{proof}[Proof of Lemma~\ref{lem:norm_lower_bound}]
This is \cite[Theorem 5.1 (d)]{dghlp-focm}, in the metric measure space setting. The proof goes through in the measure network setting.
\end{proof}

\subsection{Runtime Analysis for Hypergraph Simplification}\label{sec:runtime_analysis}

We ran the experiments using the following computing infrastructures:
\begin{itemize}
\item CPU models: 32 Intel Xeon Silver 4108 CPU \@ 1.80GHz cores (HT)
\item Amount of memory: 132GB of RAM
\item Operating system: OpenSUSE Leap 15.3 (x86\_64)
\end{itemize}
\autoref{table:hypergraph-size-runtime} summarizes the size of each dataset, including the number of hyperedges and the number of vertices contained in each hypergraph, the runtime of computing the barcode, the runtime of performing the simplification (simp.) at the first step, and the maximum runtime among all simplification steps for each dataset.~\autoref{fig:runtime-gw-distances} demonstrates the distribution and the minimum runtime of computing the hypernetwork distances $d_{\mathcal{H},2}(H_0, H_i)$ at each step $i$.
~\autoref{table:runtime-gw-distances} summarizes the minimum runtime at the first step as well as the minimum, maximum, mean, and median values of the minimum runtime of computing the distances across all simplification steps for each dataset.

The largest dataset is the \textit{Coauthor Network} dataset, which contains 506 nodes and 491 hyperedges. 
Therefore, this dataset gives rise to the largest runtime of computing the barcode, simplified hypergraphs, and hypernetwork distances. The maximum runtime among all steps is around 5 seconds for the \textit{Coauthor Network} dataset.

\begin{table}[ht]
\begin{center}
\begin{tabular}{||c||c|c||c||c|c||}
\hline
    Data & \#v & \#e & Barcode & Simp. (first step) & Simp. (maximum)  \\ \hline
    DNS &57 &30 & 0.0014 & 0.1130 & 0.1130 \\ \hline
    CN & 506 & 491 & 0.1317 & 4.0484 & 4.1506 \\ \hline    
    LM & 80 & 45 & 0.0041 & 0.2025 & 0.2438 \\ \hline
\end{tabular}
\end{center}
\vspace{2mm}
\caption{The number of vertices (\#v), the number of hyperedges (\#e), the runtime (in seconds) of computing the barcode, the runtime (in seconds) of performing the simplification at the first step, and the maximum runtime (in seconds) among all simplification steps in datasets \textit{ActiveDNS} (DNS), \textit{Coauthor Network} (CN), and \textit{Les Mis\'{e}rables} (LM).}
\label{table:hypergraph-size-runtime}
\end{table}

\begin{figure}[!ht]
	\centering
	\vspace{-2mm}
	\includegraphics[width=0.99\columnwidth]{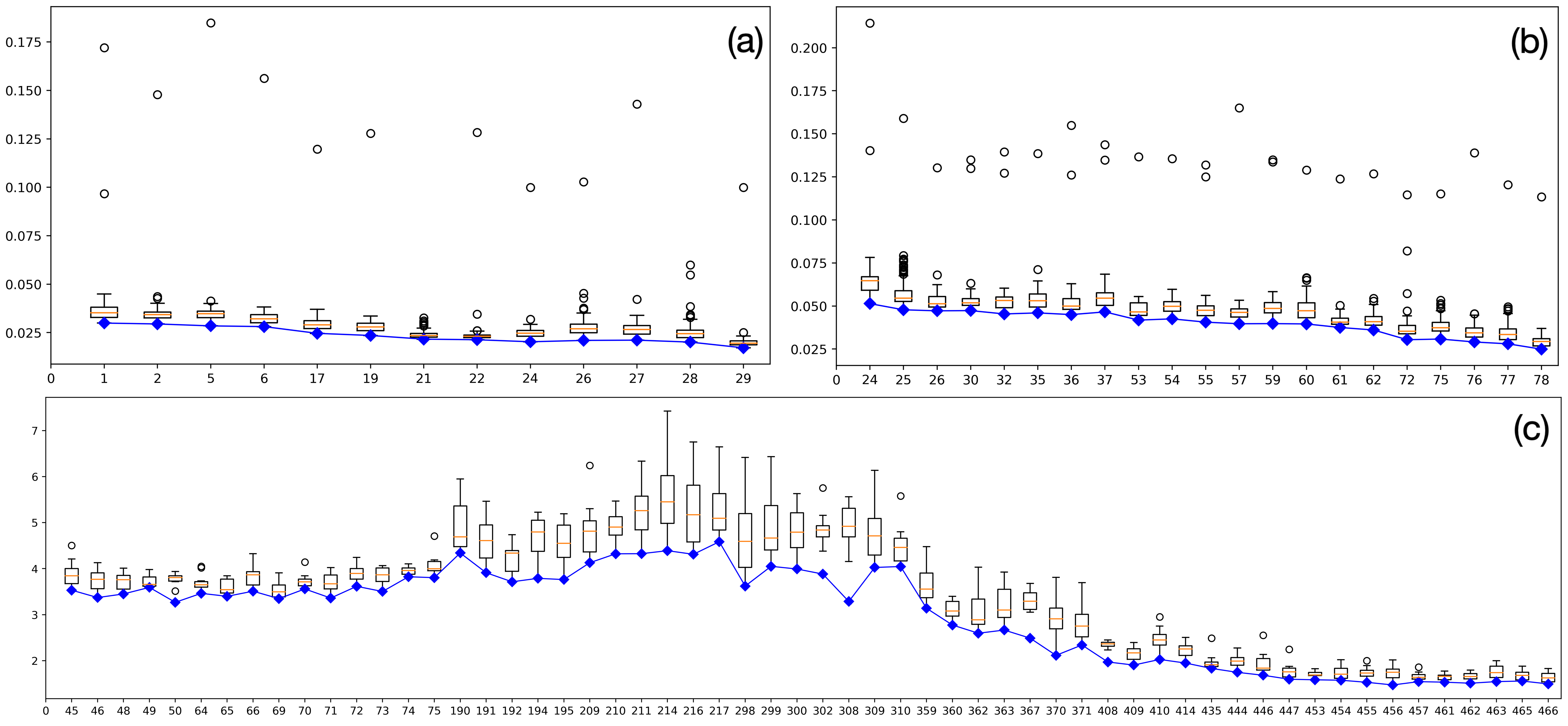}
	\caption{Runtime in computing the GW distances for \textit{activeDNS} (a), \textit{Les Mis\'{e}rables} (b) and \textit{Coauthor Network} (c) datasets. The $x$-axes display the simplification steps. The $y$-axes are time recorded in seconds.}
	\label{fig:runtime-gw-distances}
\end{figure}

\begin{table}[ht]
\begin{center}
\begin{tabular}{|c|c|c|c|c|c|}
\hline
    Data & First step & Minimum & Maximum & Mean & Median  \\ \hline
    DNS & 0.0299 & 0.0171 &  0.0299 & 0.0236 &  0.0216 \\ \hline
    CN & 3.5320 & 1.4722 & 4.5847 & 2.9872 & 3.3658  \\ \hline    
    LM & 0.0513 & 0.0249 & 0.0514 & 0.0399 & 0.0406  \\ \hline
\end{tabular}
\end{center}
\vspace{2mm}
\caption{The minimum runtime (in seconds) at the first step as well as the minimum, maximum, mean, and median values of the minimum runtime (in seconds) of computing the distances across all simplification steps for each dataset.}
\label{table:runtime-gw-distances}
\end{table}


\end{document}